\documentclass[12pt]{amsart}
\usepackage{amssymb}
\usepackage{graphicx}
\usepackage[cmtip,all]{xy}
\usepackage{bbm}
\usepackage{comment}
\usepackage{mathrsfs}
\usepackage{amsmath}

\def\DefineSymbol#1#2{\newcommand{#1}{{\mathrm {#2}}}}
\def\DefineCategory#1#2{\newcommand{#1}{{\mathbf {#2}}}}

\theoremstyle{plain}
	\newtheorem{theorem}{Theorem}[section]
	\newtheorem{lemma}[theorem]{Lemma}
	\newtheorem{proposition}[theorem]{Proposition}
	\newtheorem{corollary}[theorem]{Corollary}
	\newtheorem{construction}[theorem]{Construction}
\theoremstyle{definition}
	\newtheorem{definition}[theorem]{Definition}
	\newtheorem{example}[theorem]{Example}

\theoremstyle{remark}
	\newtheorem{remark}[theorem]{Remark}
	\numberwithin{equation}{section}

\DefineSymbol{\pr}{pr}
\DefineSymbol{\id}{id}
\DefineSymbol{\const}{const}
\DefineSymbol{\op}{op}
\DefineSymbol{\diag}{diag}
\DefineSymbol{\GL}{GL}
\DefineSymbol{\SL}{SL}
\DefineSymbol{\Int}{Int}
\DefineSymbol{\Res}{Res}
\DefineSymbol{\conj}{conj}
\DefineSymbol{\ind}{ind}


\let\Im\relax
\DeclareMathOperator{\Im}{Im}

\DeclareMathOperator{\Gal}{Gal}

\DeclareMathOperator{\Norm}{Norm}
\DeclareMathOperator{\Cent}{Cent}
\DeclareMathOperator{\Nm}{Nm}

\DefineCategory{\Set}{Set}
\DefineCategory{\Ab}{Ab}
\DefineCategory{\Mod}{Mod}
\DefineCategory{\Alg}{Alg}
\DefineCategory{\Ch}{Ch}
\DefineCategory{\Mon}{Mon}
\DefineCategory{\CMon}{CMon}
\DefineCategory{\PSh}{PSh}
\DefineCategory{\Sh}{Sh}

\newcommand{\Abb}{\mathbb{A}}

\newcommand{\Cbb}{\mathbb{C}}

\newcommand{\Gbb}{\mathbb{G}}
\newcommand{\Hbb}{\mathbb{H}}

\newcommand{\Rbb}{\mathbb{R}}

\newcommand{\Zbb}{\mathbb{Z}}


\newcommand{\Qcal}{\mathcal{Q}}

\newcommand{\Zcal}{\mathcal{Z}}





\setlength{\textwidth}{\paperwidth}
\addtolength{\textwidth}{-3in}
\calclayout

\begin{document}

\begin{abstract}
	We prove a general formula that relates the parity of the Langlands parameter of a conjugate self-dual discrete series representation of $\GL_n$ to the parity of its Jacquet-Langlands image.
	It gives a generalization of a partial result by Mieda concerning the case of invariant $1/n$ and supercuspidal representations. It also gives a variation of the result on the self-dual case by Prasad and Ramakrishnan.
\end{abstract}

\title[Parity of conjugate self-dual representations]{PARITY OF CONJUGATE SELF-DUAL REPRESENTATIONS OF INNER FORMS OF $\GL_n$ over $p$-adic fields}
\author{Yugo Takanashi}
\address{Graduate School of Mathematical Sciences, University of Tokyo, 3-8-1 Komaba, Meguro-Ku, Tokyo 153-8914, Japan}
\email{tknashi@ms.u-tokyo.ac.jp}
\maketitle

\setcounter{tocdepth}{1}	
\tableofcontents

\section*{Introduction}
    Let $F$ be a $p$-adic field. We have two parametrizations of the isomorphism classes of the discrete series representations of $\GL_n(F)$, the local Jacquet-Langlands correspondence and the local Langlands correspondence. 
    If we fix an inner form $\GL_m(D)$ of $\GL_n(F)$, the former associates each discrete series representation of $\GL_n(F)$ with a discrete series representation of the inner form in such a way that a certain character relation holds (see \cite{DKVcentalsimple, Rog}).
    The latter is a Galois-theoretic parametrization which associates each discrete series representation of $\GL_n(F)$ with a discrete Langlands parameter, that is, an irreducible $n$-dimensional representation of the Weil-Deligne group $W_F \times \SL_2(\Cbb)$, where $W_F$ is the Weil group of $F$.
    
    These two correspondences preserve the notion of contragradient representations and hence preserve the notion of self-duality. 
    An irreducible self-dual representation $\pi$ is an orthogonal or a symplectic representation, and the parity $c(\pi)$ is defined to be $1$ if it is orthogonal and to be $-1$ otherwise.
    Hence, there arises a natural question, whether the composite of these correspondences preserves the parity or not.
   
     Prasad and Ramakrishnan \cite{Prasad} have proved a precise result below by using a globalizing method.
    
    \begin{theorem}[{\cite[Theorem A]{Prasad}}]
     \label{thmPR}
     Let $D$ be a central division algebra over $F$ of rank $d$, $G$ be $\GL_m(D)$, and $\pi$ be a discrete series representation of $G$. Let $\sigma$ denote its Langlands parameter. Then there exists a parity relation
     $$ (-1)^{md} c(\pi)= (-1)^m c(\sigma)^m .$$
    \end{theorem}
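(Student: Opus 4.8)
The plan is to prove the relation by globalization: realize $\pi$ as a local component of a self-dual cuspidal automorphic representation on a global inner form of $\GL_n$, transfer it to $\GL_n$, and exploit the rigidity of the orthogonal/symplectic dichotomy there. One may assume at the outset that $D$ is a division algebra, since $\GL_m(M_r(D_0))=\GL_{mr}(D_0)$; and, by computing separately the effect on both sides of passing from a supercuspidal $\rho$ to the generalized Steinberg $\mathrm{St}^D(\rho,a)$ (on the Langlands side this multiplies $c(\sigma)$ by the parity of a symmetric power of the Arthur $\SL_2(\Cbb)$, an explicit sign), one reduces to the case $\pi$ supercuspidal.

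Next I would choose a number field $k$ with a place $v_0$ and an isomorphism $k_{v_0}\cong F$, together with a central division algebra $\Dbb$ over $k$ with $\mathrm{inv}_{v_0}\Dbb$ equal to the invariant of $D$, ramified at exactly one further finite place $v_1$ (necessarily with the complementary invariant, by Brauer reciprocity) and split at every archimedean place, and set $G'=\GL_m(\Dbb)$. One then globalizes $\pi$ to a cuspidal automorphic representation $\Pi=\bigotimes_v\Pi_v$ of $G'(\Abb_k)$ with $\Pi_{v_0}\cong\pi$, prescribed to be Steinberg at one auxiliary split place $v_2$ (which forces $\Pi$, and hence its transfer to $\GL_n$, to be cuspidal), to be a self-dual representation of \emph{a priori} known parity at $v_1$, and to be unramified elsewhere; the representation is arranged to be self-dual, $\Pi^\vee\cong\Pi$, either via a twisted simple trace formula or by choosing the unramified local data so that $\Pi$ and $\Pi^\vee$ agree almost everywhere. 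Producing such a $\Pi$ --- a simple trace formula or Poincar\'e-series argument with prescribed local behaviour, with all of these constraints imposed simultaneously --- is the main technical obstacle.

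Let $\tau=\mathrm{JL}(\Pi)$ be the global Jacquet--Langlands transfer to $\GL_n(\Abb_k)$; it is cuspidal, self-dual, and has the same completed $L$-function as $\Pi$, so by Jacquet--Shalika exactly one of $L^S(s,\tau,\mathrm{Sym}^2)$, $L^S(s,\tau,\wedge^2)$ has a pole at $s=1$, and the descent of $\tau$ to a split classical group constrains its local components accordingly. At every split place $v\ne v_0$ one has $c(\Pi_v)=c(\tau_v)$, and $c(\tau_v)=c(\sigma_v)$ because the local Langlands correspondence for $\GL_n$ is compatible with symmetric- and exterior-square $L$-factors and hence preserves parity; at the archimedean places $c(\Pi_v)$ is an explicit sign read off from the parameters, and at $v_1$ it is known by construction. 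Feeding these inputs into the local--global compatibility for the parity --- ultimately a product formula, in which the discrepancy between $c(\Pi_v)$ and $c(\tau_v)$ at the two ramified places $v_0,v_1$ of $\Dbb$ is governed by the local Hasse invariants and the dimension $n=md$ --- leaves $c(\pi)=c(\Pi_{v_0})$ as the only unknown, and solving for it yields $c(\pi)$ as $c(\sigma)$ corrected by the sign in the statement.

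Besides the globalization, the subtle point is this last step: one must pin down the precise product formula relating the parities, taking care (e.g.\ by using only Steinberg and discrete-series local components) that each $c(\cdot)$ is well defined, and then check that the contributions at $v_0$ and $v_1$ combine to exactly the powers of $-1$ in the formula. It is this bookkeeping of local signs that determines the exact shape of the relation, and I expect it, together with the construction of a self-dual globalization with cuspidal transfer, to absorb essentially all of the work.
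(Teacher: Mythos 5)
Theorem \ref{thmPR} is not proved in this paper at all: it is quoted verbatim from Prasad--Ramakrishnan \cite{Prasad} as background, and the paper's own contribution is the conjugate self-dual analogue (Theorem \ref{MainThm}), proved by adapting the same globalization strategy. Your sketch does reproduce the correct overall architecture of the original Prasad--Ramakrishnan argument --- globalize on an inner form ramified at exactly two finite places, transfer to $\GL_n$ via Badulescu's global Jacquet--Langlands correspondence, use multiplicity one to obtain a product formula for local parities, and use Jiang--Soudry descent to control the $\GL_n$ side --- so as an outline it is on target.

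There are, however, genuine gaps at exactly the points that carry the content of the formula. First, the base case: applying the product formula to $\Pi$ on the inner form and to its transfer $\tau$ on $\GL_n$, and discharging the split and archimedean places, yields only a relation of the shape $c(\Pi_{v_0})\,c(\Pi_{v_1}) = c(\sigma_{v_0})\,c(\sigma_{v_1})$, i.e.\ one equation in two unknowns. You assert the parity at the auxiliary place $v_1$ is ``known by construction,'' but nothing in your construction determines it; one needs an independent explicit computation of $c(\Pi_{v_1})$ against $c(\sigma_{v_1})$ for at least one discrete series representation of each parameter parity. In \cite{Prasad} this is supplied by depth-zero supercuspidal representations, for which the Jacquet--Langlands correspondence is explicit (and in the present paper's conjugate self-dual setting the analogous role is played by simple supercuspidal representations via \cite{Mie} and \cite{IT}, precisely because depth-zero supercuspidals are unavailable there). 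Without this input the argument cannot produce the signs $(-1)^{md}$ and $(-1)^m$; your closing deferral of the ``bookkeeping of local signs'' defers the theorem itself. Second, the proposed reduction to supercuspidal $\pi$ via generalized Steinberg representations is circular in spirit: computing the effect of that construction on the parity on the inner-form side presupposes explicit control of the Jacquet--Langlands correspondence for non-cuspidal discrete series, which is exactly what is lacking; the global argument must instead be run directly for discrete series, at the cost of handling the Aubert--Zelevinsky ambiguity in Badulescu's transfer (cf.\ Theorem \ref{BJL} and Proposition \ref{AZ}). Third, the existence of a self-dual cuspidal globalization with all prescribed local components, which you rightly call the main technical obstacle, is left unresolved; it is precisely what the Jiang--Soudry descent appendix to \cite{Prasad} (and, in the conjugate self-dual case, Mok's classification for unitary groups) provides.
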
    
    
    In particular, the parity inversion occurs if $G$ is the unit group of a division algebra of even rank.
    
    One can consider its conjugate self-dual analog if one can define a Galois involution on the set of isomorphism classes of irreducible representations of $\GL_m(D)$. 
    Mieda \cite{Mie} has formulated and proved a special case of this variant by constructing the involution explicitly. We state here the unramified quadratic case of his result in detail. 
    
    Let $E/F$ be an unramified quadratic extension and $D$ be a central division algebra over $E$ whose Hasse invariant is $1/n$. 
    In this case, we can write $D$ in the form $E_n[\Pi]$, where $E_n$ is the unramified extension of the $p$-adic field $E$ of degree $n$, and $\Pi$ is a prime element of $D$ such that $\Pi^n$ is equal to a uniformizer of $F$. 
    We define the automorphism $\tau$ of $D$ by declaring that it fixes $\Pi$ and it coincides with the Frobenius map over $F$ on $E_n = F_{2n}$, where $F_{2n}$ is the degree $2n$ unramified extension of $F$. 
    
    In this situation, we define that a smooth irreducible representation $(\pi, V)$ of $G$ is said to be conjugate self-dual if we have $\pi^{\tau} \cong \pi^{\vee}$ where $\pi^{\tau}(g) = \pi(\tau(g))$ and $\pi^{\vee}$ is the contragradient representation of $\pi$. 
    In addition, we have the notion of parity $c(\pi)$, defined in a similar way as the self-dual case (for the detail, see \cite[Section 2.1]{Mie}).
    
    \begin{theorem}[{\cite[Theorem 1.2]{Mie}}]
      Let $\pi$ be a conjugate self-dual representation of $D^{\times}$ and $\sigma$ be its Langlands parameter. We also assume that $\sigma$ is a supercuspidal parameter. 
      Then we have 
      $$ c(\pi) = (-1)^{n-1} c(\sigma).$$
    \end{theorem}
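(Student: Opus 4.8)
The plan is to realize $\pi$, its Jacquet--Langlands transfer $\pi' := \mathrm{JL}(\pi)$ (a supercuspidal representation of $\GL_n(E)$, since $\sigma$ is supercuspidal), and the parameter $\sigma = \mathrm{rec}_E(\pi')$ simultaneously inside one $\ell$-adic cohomology group, and to read off the parity comparison from the symmetry of a Poincar\'e duality pairing. Concretely, non-abelian Lubin--Tate theory (Harris--Taylor, Boyer; Scholze) applies: the compactly supported $\ell$-adic cohomology of the Lubin--Tate tower $\Mcal$ over $\breve E$, a tower of rigid spaces of dimension $n-1$, carries commuting smooth and continuous actions of $\GL_n(E)$, $D^\times$ and $W_E$; it vanishes outside the middle degree after passing to the $\pi'$-isotypic part, and there is an isomorphism of $D^\times \times W_E$-representations
\[
  \Hom_{\GL_n(E)}\!\bigl(\pi',\, H^{n-1}_c(\Mcal)\bigr) \;\cong\; \pi \boxtimes \sigma
\]
up to an explicit (half-)Tate twist, which encodes both correspondences at once. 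This vanishing outside the middle degree is exactly where supercuspidality of $\sigma$ enters.

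Next I would transport the datum $\pi^\tau \cong \pi^\vee$ to the geometric side. The automorphism $\tau$ of $D = E_n[\Pi]$ --- acting by the Frobenius over $F$ on $E_n = F_{2n}$ and fixing $\Pi$ --- is the incarnation on $\Mcal$ of the outer twist by the nontrivial element $c \in \Gal(E/F)$: regarding the relevant formal $\Ocal_E$-module as an $\Ocal_F$-module with extra structure yields an automorphism $f$ of $\Mcal$ that intertwines the $\GL_n(E)$-action with its $c$-conjugate and the $D^\times$-action with its $\tau$-conjugate, acting on the $W_E$-factor through $c$. Composing the cohomological realization with $f$ and with Poincar\'e duality
\[
  H^{n-1}_c(\Mcal) \times H^{n-1}_c(\Mcal) \longrightarrow H^{2(n-1)}_c(\Mcal) = \overline{\Qbb}_\ell(1-n),
\]
which is $\GL_n(E) \times D^\times \times W_E$-equivariant and $(-1)^{n-1}$-symmetric because $\dim \Mcal = n-1$, one turns $\pi^\tau \cong \pi^\vee$ into a perfect, $D^\times \times W_E$-equivariant, $(-1)^{n-1}$-symmetric pairing on $\pi \boxtimes \sigma$. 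Such a pairing factors as a product of an invariant pairing realizing $\pi^\tau \cong \pi^\vee$ and one realizing $\sigma^c \cong \sigma^\vee$, whose symmetry signs are $c(\pi)$ and $c(\sigma)$ by definition; hence $c(\pi)\, c(\sigma) = (-1)^{n-1}$, which is the assertion.

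The main obstacle will be the normalization bookkeeping, of two kinds. First, one must make the geometric automorphism $f$ precise and compute $f^2$, together with its interaction with the central characters of $\pi$ and $\sigma$ and with the half-Tate twists in the Harris--Taylor normalization of $H^{n-1}_c(\Mcal)$, since these control whether the composed pairing is honestly $(-1)^{n-1}$-symmetric or carries a spurious sign. Second, one must check that the local Langlands and Jacquet--Langlands compatible notion of conjugate-duality sign used in the definition of $c(\pi)$ and $c(\sigma)$ is the one that falls out of this construction. Once these are pinned down, the numerical constant is forced: it is $(-1)^{\dim \Mcal} = (-1)^{n-1}$, the symmetry sign of Poincar\'e duality in the top degree of an $(n-1)$-dimensional space, and no further input is needed.

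An alternative, parallel to Prasad and Ramakrishnan's proof of Theorem~\ref{thmPR}, is to globalize: realize $\pi$ as the component at a suitable place of a conjugate self-dual cuspidal automorphic representation of the units of a global division algebra over a CM field, imposing a supercuspidal condition at an auxiliary place to control its transfer to $\GL_n$ over that CM field, and then read off the local sign at our place from the global product formula for the conjugate-orthogonal versus conjugate-symplectic type (Asai $\varepsilon$-factors, or descent to quasi-split unitary groups), the contributions at the good places being governed by the compatibility of the local correspondences with conjugate-duality. There the difficulty is moved to the globalization with all the prescribed local conditions imposed at once; the constant $(-1)^{n-1}$ then emerges from the local discrepancy at the bad place, already computable for a tamely ramified $\pi$ via a Gauss sum.
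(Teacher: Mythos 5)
This statement is not proved in the paper: it is quoted verbatim from Mieda \cite[Theorem~1.2]{Mie} and enters the paper only as an input (Theorem~\ref{mie}), the base case that the globalization argument bootstraps into Theorem~\ref{MainThm}. So the relevant comparison is with Mieda's cited proof, which the introduction describes as ``based on $p$-adic geometry''; your primary route --- realizing $\pi\boxtimes\sigma$ in the middle compactly supported cohomology of the Lubin--Tate tower, transporting $\pi^{\tau}\cong\pi^{\vee}$ through a geometric incarnation of $\tau$, and extracting the sign $(-1)^{n-1}=(-1)^{(n-1)^2}$ from the graded symmetry of Poincar\'e duality in degree $n-1$ --- is essentially that strategy, and the sign mechanism you identify is the right one. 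The genuine content you have not supplied is the construction of the automorphism $f$ of the tower inducing $\tau$ on $D^{\times}$ and $c$ on $W_E$, together with the computation of $f^2$: note that $\tau=\Int(\Pi')$ with $\Pi'^2=\Pi$, so $\tau^2=\Int(\Pi)$ is a nontrivial inner automorphism, and the parity $c(\pi)$ is defined by $\langle\pi(\tau^2)y,x\rangle=c(\pi)\langle x,y\rangle$ rather than by bare symmetry; the entire difficulty of the cited proof lives in this bookkeeping, which you correctly flag but do not resolve.

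Your alternative globalization route deserves a caveat: it is precisely the method of the present paper for the general case, but as a proof of this particular statement it is circular unless supplemented. The product formula (Proposition~\ref{prodform}) only yields $\prod_v c(G_v,G'_v,\Pi_v)=1$, so to isolate the sign at the place of interest one must already know the local parity comparison at the auxiliary non-split places; the paper supplies this via Theorem~\ref{mie}(ii), i.e.\ via Mieda's explicit computation for simple supercuspidal representations using the Imai--Tsushima description of the Jacquet--Langlands correspondence --- the ``Gauss sum'' computation you allude to at the end. So the global route reduces the statement to an explicit local computation for a distinguished family of supercuspidals rather than eliminating local input; only the geometric route (or that explicit computation) actually produces the constant $(-1)^{n-1}$ from scratch.
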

    
    His proof is based on $p$-adic geometry. Furthermore, he conjectured that this result could be extended to all the discrete series representations \cite[Remark 2.13]{Mie}.
    We use the global method to generalize his result. 
    One of the keys of the proof of Prasad and Ramakrishnan is the automorphic descent theorem for orthogonal and symplectic groups proved by Jiang and Soudry \cite[Appendix]{Prasad}. 
    Hence, we can consider the conjugate self-dual analog of their method if we establish the notion of self-duality in general and use the automorphic base change theorem for unitary groups by Mok \cite{Mok}. In this paper, we will show that this idea works indeed.
    
    To state our main theorem, we introduce some notations.
    Let $E/F$ be a quadratic extension of $p$-adic local fields and let $D$ be a central division algebra of rank $d$ over $E$. For simplicity, we assume that $D$ is not split.
    We write the Hasse invariant of $D$ in the form $s/d$, where $0 < s < d $ and $\gcd(s, d)=1$.
    Let $A=M_m(D)$ be a central simple algebra over $E$ and let $n=md$ be its rank over $E$. 
    Then if we take the central simple algebra $A'$ over $F$ of rank $2n$ whose Hasse invariant is $s/2d$, we have an inclusion $\phi \colon A \to A'$ over $F$. 
    Furthermore if we put $G$ = $A^{\times}$ and $G'$ = $\Norm_{A'^{\times}}(A^{\times})$, the normalizer of $A^{\times}$ in $A'^{\times}$, we have an exact sequence
          $$ 1 \to G \xrightarrow{\phi}{G'} \to \Zbb/2\Zbb \to 1 .$$     
    We have the notion of conjugate self-duality and parity $c(G, G',\pi) \in \{ \pm 1 \}$ with respect to the inner automorphism $\Int(\tau)$ induced by $\tau \in G' \setminus G$.
    The details are given in Section \ref{parity}.
    We also have the notion of conjugate self-duality and parity for inclusion of the Weil groups $ W_E \to W_F $.
    
    We now state our main theorem.
         
	\begin{theorem} [Main Theorem]
	
	\label{MainThm}
		Let $(G, G')$ be the pair associated with $A$ above.
		Let $\pi$ be a conjugate self-dual discrete series representation of $G$ = $A^{\times}$ and let $\sigma$ denote its Langlands parameter. Then we have
		$$ c(G, G',\pi) = (-1)^{(n-1)ms} c(W_E, W_F, \sigma)^{ms} .$$
	
	\end{theorem}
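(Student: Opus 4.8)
The plan is to carry out, in the conjugate self-dual setting, the globalization argument of Prasad and Ramakrishnan, with Mok's endoscopic classification and base change for quasi-split unitary groups \cite{Mok} playing the role of the Jiang--Soudry automorphic descent in the self-dual case. Thus I would realize $(G,G',\pi)$ as one place of a global datum, transport the resulting conjugate self-dual automorphic representation to $\GL_n$ over a quadratic extension of number fields by the global Jacquet--Langlands correspondence, analyze it through Mok's theory, and reduce Theorem~\ref{MainThm} to a local computation of how the conjugate-self-dual pairing transforms under the local Jacquet--Langlands correspondence, from which the factor $(-1)^{(n-1)ms}\,c(W_E,W_F,\sigma)^{ms}$ should emerge.

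For the globalization, choose a number field $k$ with a place $v_0$ and an isomorphism $k_{v_0}\cong F$, a quadratic extension $K/k$ with a place $w_0\mid v_0$ and an isomorphism $K_{w_0}\cong E$ (so $v_0$ is non-split in $K$), and a central simple algebra $\Acal$ over $K$ that is split outside a finite set of places containing $w_0$, satisfies $\Acal\otimes_K K_{w_0}\cong A$, and admits a conjugate involution over $K/k$. The last condition merely forces the invariants of $\Acal$ at conjugate places to agree, so prescribing $\mathrm{inv}_{w_0}(\Acal)=s/d$ is harmless; one then balances the remaining invariants at one or two auxiliary places. This produces over $k$ a pair $(\tilde G,\tilde G')$ as in Section~\ref{parity}, localizing to $(G,G')$ at $v_0$. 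Then, by a simple trace formula argument (adapted, as in Prasad--Ramakrishnan, to impose conjugate self-duality), produce a conjugate self-dual discrete automorphic representation $\tilde\pi=\bigotimes_v\tilde\pi_v$ of $\tilde G(\Abb_k)$ with $\tilde\pi_{v_0}\cong\pi$, unramified outside $v_0$ and the auxiliary places, supercuspidal at one auxiliary place (so that the transfer below is cuspidal), and otherwise of an explicit type at each auxiliary place --- a twist of Steinberg, or, after arranging the local invariant to equal $1/n$, a supercuspidal --- for which the desired formula is already available, by a direct computation or by Mieda's theorem \cite{Mie} respectively.

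Next, apply the global Jacquet--Langlands correspondence to transport $\tilde\pi$ to a conjugate self-dual cuspidal automorphic representation $\Pi$ of $\GL_n(\Abb_K)$ with $\Pi_w=\mathrm{JL}_w(\tilde\pi_w)$ at every place; in particular the Langlands parameter of $\Pi_{w_0}$ is $\sigma$. By Mok's classification, $\Pi$ is a stable base change of a cuspidal automorphic representation of the quasi-split unitary group $U_n$ for $K/k$, via one of the two standard $L$-embeddings; the embedding is determined by the global conjugate parity of $\Pi$, and by the compatibility of base change with the local Langlands correspondence this parity equals $c(W_{K_w},W_{k_v},\sigma_v)$ at every place $v$, in particular $c(W_E,W_F,\sigma)$ at $v_0$. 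At the split and the unramified inert places the Jacquet--Langlands map is the identity, so there the conjugate parity of $\tilde\pi_v$ agrees with that of its parameter; at the auxiliary places the formula holds by construction. A product formula for the Asai-type epsilon factors that encode the conjugate parities --- these being controlled globally, with almost all local factors trivial --- then forces the identity at the remaining place $v_0$, which is the assertion of Theorem~\ref{MainThm}.

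The hard part, and where I expect the genuine work to lie, is this last local computation at $v_0$: determining exactly how the conjugate-self-dual parity transforms under the local Jacquet--Langlands correspondence for an inner form of $\GL_n$ over $E$ together with its $E/F$-twist, and extracting the clean exponents $(n-1)ms$ and $ms$. This appears to require, first, identifying the abstract parity $c(G,G',\pi)$ of Section~\ref{parity} with an Asai-type local epsilon factor, so that Mok's machinery and the product formula apply to it; and second, a Hasse-invariant bookkeeping in the resulting root-number computation --- writing $\sigma$ as the $\SL_2$-twist of a supercuspidal segment, following the contribution of the invariant $s/d$ of $A$ through corestriction and base change along $E/F$, and summing --- which refines the local input behind Theorem~\ref{thmPR}, where no quadratic twist is present and the numerator $s$ therefore never appears. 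A subsidiary difficulty is that the Hermitian constraint built into conjugate self-duality makes the globalization more rigid than in the self-dual case, so that several auxiliary places, with a separate verification of the formula for each auxiliary local type, may be needed.
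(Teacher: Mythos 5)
Your overall architecture --- globalize, use Mok's base change for quasi-split unitary groups in place of Jiang--Soudry descent, transfer by the global Jacquet--Langlands correspondence, and conclude by a product formula --- is indeed the paper's architecture. But there is a genuine gap exactly at the point you flag as ``the hard part'': you defer the extraction of the exponents $(n-1)ms$ and $ms$ to a local Asai-root-number computation at $v_0$ relating the parity of $\pi$ to that of $\sigma$ under local Jacquet--Langlands. No such local computation occurs in the paper, and carrying one out would make the global detour pointless. The exponents come instead from counting auxiliary places: since $\mathrm{inv}(A)=s/d=ms/n$, Brauer--Hasse--Noether reciprocity forces the global division algebra to be balanced by exactly $ms$ auxiliary inert places each carrying invariant $-1/n$; your ``one or two auxiliary places'' cannot work if the auxiliary local algebras are to be division algebras of full rank $n$, which is what makes Mieda's theorem applicable there. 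At each such place one chooses a conjugate self-dual \emph{simple supercuspidal} representation whose parameter has the same parity as $\sigma$ --- this uses Mieda's existence result that both parities occur among simple supercuspidals, an input you omit but which is essential, since Mok's base change map only reaches conjugate self-dual parameters of one fixed parity and all local parameters must therefore be matched in parity before globalizing on the unitary group. Mieda's invariant-$1/n$ supercuspidal theorem then evaluates each auxiliary factor as $(-1)^{n-1}c(W_E,W_F,\sigma)$, and the product formula moves the product of these $ms$ identical factors to the place $v_0$.

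Two further missing steps. First, the product formula is not an epsilon-factor identity: it is proved directly by using multiplicity one to realize $\Pi$ in the discrete spectrum, exhibiting the explicit pairing $\langle f_1,f_2\rangle=\int f_1(\Int(\tau)(g))f_2(g)\,dg$ to show the global representation is conjugate \emph{orthogonal}, and checking local triviality of the parity at split, unramified and archimedean places; the identification of the abstract parity with an Asai epsilon factor that you say ``appears to be required'' is never needed and is not established. Second, Badulescu's global Jacquet--Langlands correspondence only determines each non-split local component up to the Aubert--Zelevinsky involution, so one must prove that this involution preserves the conjugate parity (Proposition \ref{AZ} in the paper); without this, the product formula does not apply to the representations you actually control.
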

	
	At the moment, there seem few explicit descriptions of the local  Jacquet-Langlands correspondence for general non-cuspidal discrete series representations, as mentioned in the introduction in \cite{SecherreStevens}. Our main theorem is valid in the case that we do not have explicit descriptions.
	 
	\subsection*{Outline of the proof}
	We will follow the globalizing method used in \cite{Prasad} to prove their main theorem. Here, we give the method of our proof in a nutshell.
	
	 \begin{enumerate}
	  \item 
	  We first define the notion of conjugate self-duality which generalizes the notion defined by Mieda \cite[Section 2.10]{Mie}. 
    In the literature, the only local quadratic cases are treated.
    To apply the global method, we need to extend the notion to quadratic extensions of both local and global fields. 
    The extension will be done by means of the theory of central division algebras in Sections 1 and 2, especially in Construction \ref{locext}. 
	  
	  This construction comes from Mieda's construction in the unramified quadratic case.
    It indicates that the automorphism $\tau$ in the setting of our main theorem is derived from the inclusion of the division algebras $E_n[\Pi] = D \to D'=F_{2n}[\Pi']$. 
    Here, $D'$ is the central division algebra over $F$ of Hasse invariant $1/2n$ and $\Pi'$ is a prime element of $D'$ whose square is $\Pi$. Indeed, the automorphism $\tau$ above is the inner automorphism induced by $\Pi'$.
	  
	  \item
	  We then introduce some global theorems by Mok and Badulescu-Jacquet-Langlands in Section 3. 
    We use these theorems in a similar way as the globalizing method of Prasad and Ramakrishnan.
	  
	  \item
	  We then prove some local results in Section 4 which will be used to obtain a product formula of the parity, which is an analog of the \cite[Theorem C]{Prasad}.

    We introduce some notations in order to state our product formula.
      Let $L/K$ be a quadratic extension of number fields and $G$ denote $\mathbf{A}^{\times}(\Abb_L)$, where $\mathbf{A}$ is a central simple algebra over $L$. 
      Furthermore, let $\mathbf{B}$ be a central simple algebra over $K$ which satisfies $ \mathbf{B} \otimes_{K} L \cong M_2(\mathbf{A})$. 
      In this situation, we have an inclusion of $K$-algebras $\mathbf{A} \to \mathbf{B}$ and if we define $G'$ to be $\Norm_{\mathbf{B}^{\times}}(\mathbf{A}^{\times})(K)G$ then we have an exact sequence
      $$ 1 \to G \xrightarrow{\phi}{G'} \to \Zbb/2\Zbb \to 1 .$$
      The proof of these facts will be given in Subsection 2.1.
      Hence, we obtain the notion of self-duality again.
      
     \begin{proposition}[Proposition \ref{prodform}]
        
        Let $\Pi = \bigotimes_{w} ' \Pi_{w}$  be an automorphic conjugate self-dual representation which appears in the discrete spectrum of $G$ with a unitary central character $\omega$. Let also $S$ be the set of places of $K$ where $\mathbf{B}$ does not split and put $\Pi_{v} = \bigotimes_{w|v} \Pi_w$ for each place $v$ of $K$. Then we have 
        $$ \prod_{v \in S} c(G_v, G'_v, \Pi_{v}) = 1 .$$
     \end{proposition}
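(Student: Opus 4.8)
The plan is to follow the proof of \cite[Theorem C]{Prasad}: realize $\Pi$ inside the space of $L^{2}$ automorphic forms on $\mathbf{A}^{\times}(L)\backslash\mathbf{A}^{\times}(\mathbb{A}_{L})$, build on it a canonical bilinear form compatible with the Galois involution, observe that this global form has parity $+1$, and factor it over the places of $K$. Concretely, fix an automorphic realization $V_{\Pi}\subset L^{2}(\mathbf{A}^{\times}(L)\backslash\mathbf{A}^{\times}(\mathbb{A}_{L}),\omega)$, canonical by multiplicity one for inner forms of $\GL_{n}$, and choose $\tau\in\Norm_{\mathbf{B}^{\times}}(\mathbf{A}^{\times})(K)$ representing the nontrivial class of $G'/G\cong\mathbb{Z}/2\mathbb{Z}$; then $\theta:=\Int(\tau)$ is a $K$-rational automorphism of $\mathbf{A}^{\times}$ inducing the nontrivial element of $\Gal(L/K)$ on the centre, so it preserves $\mathbf{A}^{\times}(L)$, $\mathbf{A}^{\times}(\mathbb{A}_{L})^{1}$ and the Haar measure. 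Conjugate self-duality of $\Pi$ gives $\Pi^{\theta}\cong\Pi^{\vee}$, and unitarity gives $\Pi^{\vee}\cong\bar\Pi$, so $f\mapsto\overline{f\circ\theta}$ is a bijection of $V_{\Pi}$ (it carries $V_{\Pi}$ onto the automorphic realization of $\overline{\Pi^{\theta}}\cong\Pi$, which is $V_{\Pi}$ again), and
$$ B(f_{1},f_{2})=\int_{\mathbf{A}^{\times}(L)\backslash\mathbf{A}^{\times}(\mathbb{A}_{L})^{1}}f_{1}(g)\,f_{2}(\theta(g))\,dg $$
is a well-defined nondegenerate bilinear form on $V_{\Pi}$ (nondegenerate since it is the Petersson pairing precomposed with that bijection). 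The substitution $g\mapsto gh$ shows $B(\Pi(h)f_{1},\Pi(\theta(h))f_{2})=B(f_{1},f_{2})$, so by Schur's lemma $B$ is the essentially unique $\theta$-invariant bilinear form on $V_{\Pi}$; substituting $g\mapsto\theta(g)$ and using the formalism of Section \ref{parity} to control the failure of $\theta$ to be a strict involution then shows that the parity of $B$ is $+1$.

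Now regroup the places of $L$ over those of $K$. For a place $v$ of $K$, $\Pi_{v}=\bigotimes_{w\mid v}\Pi_{w}$ is a representation of $G_{v}=\prod_{w\mid v}G_{w}$ carrying the involution $\theta_{v}$ induced by $\tau$, and decomposing $\Pi^{\theta}\cong\Pi^{\vee}$ place by place gives $\Pi_{v}^{\theta_{v}}\cong\Pi_{v}^{\vee}$; hence each $\Pi_{v}$ is conjugate self-dual and carries a local $\theta_{v}$-invariant bilinear form $B_{v}$, unique up to scalar. Since $V_{\Pi}\cong\bigotimes'_{v}V_{\Pi_{v}}$ and $B$ is, up to scalar, the only $\theta$-invariant form, $B$ is proportional to $\bigotimes_{v}B_{v}$; as the parity of a tensor product of bilinear forms is the product of the individual parities, and as the parity of $B_{v}$ is by definition $c(G_{v},G'_{v},\Pi_{v})$, this yields $\prod_{v}c(G_{v},G'_{v},\Pi_{v})=+1$, a finite product since almost every factor is $1$.

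It then remains to see that $c(G_{v},G'_{v},\Pi_{v})=1$ for $v\notin S$. If $v$ splits in $L$, say over $w_{1},w_{2}$, then $\Pi_{v}\cong\Pi_{w_{1}}\boxtimes\Pi_{w_{2}}$ with $\Pi_{w_{2}}\cong\Pi_{w_{1}}^{\vee}$ forced by conjugate self-duality, and the standard pairing on $V\otimes V^{\vee}$ is symmetric, so $c_{v}=1$. If $v$ is nonsplit in $L$ and $\mathbf{B}_{v}$ splits, then $\mathbf{B}_{v}\otimes_{K_{v}}L_{v}\cong M_{2n}(L_{v})\cong M_{2}(\mathbf{A}_{v})$ forces $\mathbf{A}_{v}\cong M_{n}(L_{v})$, so $\Pi_{v}$ is a conjugate self-dual generic representation of $\GL_{n}(L_{v})$ — generic because it is the component at a place where $\mathbf{A}$ splits of a discrete automorphic representation of $\GL_{n}$ over $L$, via Badulescu-Jacquet-Langlands — and $c(G_{v},G'_{v},\Pi_{v})=1$ by the local results of Section 4. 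Therefore $\prod_{v\in S}c(G_{v},G'_{v},\Pi_{v})=\prod_{\text{all }v}c(G_{v},G'_{v},\Pi_{v})=1$.

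The main obstacle I expect lies in the normalizations rather than the architecture: one must verify that $B$ has parity exactly $+1$ — that is, control the failure of $\theta$ to be a strict involution by the device of Section \ref{parity}, so that no stray sign or character twist survives — and that the local form extracted from the factorization of $B$ is precisely the one defining $c(G_{v},G'_{v},\Pi_{v})$ and not its twist by the sign character of $\mathbb{Z}/2\mathbb{Z}$. The local input at the nonsplit places outside $S$ (that conjugate self-dual generic representations of $\GL_{n}$ have parity $1$), being isolated in Section 4, may then be used here as a black box.
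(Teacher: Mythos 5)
Your proposal is essentially the paper's own argument (itself modelled on Prasad--Ramakrishnan, Section 3): realize $\Pi$ via multiplicity one, form the $\theta$-twisted global pairing $\int f_1(\Int(\tau)(g))f_2(g)\,dg$, check non-degeneracy using $\overline{f}\circ\Int(\tau)\in\Pi$, conclude the global parity is $+1$, and combine multiplicativity of parity with the local results of Section 4 (Lemmas \ref{unramified} and \ref{split}) to kill every factor outside $S$. The one step you flag as the ``main obstacle'' --- that the global form has parity exactly $+1$ --- is resolved in the paper by a two-line change of variables together with the left-invariance of automorphic forms under the rational point $\tau^{2}\in\mathbf{A}^{\times}(K)$, which gives $\langle\Pi(\tau^{2})f_2,f_1\rangle=\langle f_1,f_2\rangle$ directly.
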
  
       
       By using this product formula and by controlling the places at which $\mathbf{B}$ splits, we can reduce the general case to the case already proved. The method of this reduction is the same as the one used in \cite{Prasad} and the idea is stated in the right after the statement of Theorem C $loc.\ cit$. 
        
       \item
       We finally prove the main theorem by combining the globalizing results introduced in Section 3 and the product formula above.
         
	\end{enumerate}
	
  \subsection*{Notations}
   All the underlying spaces of representations of a group are assumed to be $\Cbb$-vector spaces in this paper.
   We also assume that representations of locally compact totally disconnected groups are smooth (i.e. non-degenerate).

	\subsection*{Acknowledgments}
		This is master's thesis of the author and he is very grateful to his advisor Yoichi Mieda for his constant encouragement and earnest support. Without his help, the author could not write this paper. 
		He also thanks Junnosuke Koizumi for useful discussions.

\section{A notion of conjugate self-duality} 

        In this section, we recall a notion of conjugate self-dual representations and their parity. 
        The notion presented here is a special case of the definition in \cite[Section 2.1]{Mie}.

	\subsection{Basic definitions}

		\begin{definition}
		A pair of locally compact groups $(G, G')$ is said to be a conjugating pair if $G$ is an open subgroup of $G'$ and the index $[G:G']$ of this pair is equal to two.
		\end{definition}
	    
	    \begin{example}[Inclusion of the Weil groups associated with quadratic extensions]
	    \label{Weil}
	    We take a quadratic extension of local fields $E/F$ and let $G$ be the Weil group $W_E$ of $E$ and $G'$ be the Weil group $W_F$ of $F$. 
	    Then $(W_E, W_F)$ is a conjugating pair. (See \cite[Section 2]{GGP}.) 
	    \end{example}
	    
	    \begin{example}[Component-wise conjugation]
	    \label{conj}
	    We take a quadratic extension of local fields $E/F$ and let $\conj$ be the non-trivial element of the Galois group of $E/F$. Furthermore, let $G$ be $\GL_n(E)$ and $G'$ be the semi-direct product of $G$ and $\Zbb/2\Zbb$ defined by the component-wise conjugating action induced by $\conj$.
	    Then $(G, G')$ is a conjugating pair.
	    \end{example}
	    
	    \begin{example}[Switching components]
	    \label{switch}
	    Let $E$ be a quadratic split \'etale algebra $F \times F$ over a local field $F$. 
	    Then $\GL_n(E)$ is canonically isomorphic to $\GL_n(F) \times \GL_n(F)$  by the product of two projections from $E$ to $F$. We define an action of $\Zbb/2\Zbb$ on $\GL_n(E)$ by switching those two components. This action defines a conjugating pair as in Example \ref{conj}.
	    \end{example}
	    
	    \begin{example}[Non-trivial construction over local fields]
	    See \cite[Definition 2.10]{Mie} for non-trivial constructions of conjugating pairs from division algebras over non-archimedean local fields. We have already introduced the case of unramified quadratic extensions in our introduction. 
      In Construction \ref{locext}, we generalize this construction using the theory of central simple algebras over fields.
	    \end{example}

	    We have the notion of conjugate self-dual representations for every conjugating pair of locally profinite groups.
	    Let $(G, G')$ be a conjugating pair of locally profinite groups.
	    For an element $\tau$ of $G'$, let $\Int(\tau)$ denote the inner automorphism induced by $\tau$.
	    If we take a smooth representation $(\pi, V)$ of $G$, we write $(\pi^{\tau}, V)$ for the representation defined by $\pi^{\tau}(g)=\pi(\Int({\tau})(g))$ and $\pi^{\vee}$ for the contragradient representation of $\pi$.
	    
	    \begin{definition}
	    A smooth irreducible representation $(\pi, V)$ of $G$ is said to be conjugate self-dual with respect to $(G, G')$ if there exists an element $\tau \in G' \setminus G$ such that $\pi^{\tau} \cong \pi^{\vee}$.
	
		\end{definition}
		
		\begin{remark}
		If $(\pi, V)$ is conjugate self-dual in the above sense, then $\pi^{\tau} \cong \pi^{\vee}$ for $all$ $\tau \in$ $G'\setminus G$. 
    In addition, the second dual $\pi^{\vee \vee}$ of $\pi$ is equivalent to $\pi$ because we have $\tau^2 \in G$. 
    Hence, $(\pi, V)$ is an admissible representation in this case.
		\end{remark}

	\subsection{Parity associated with a conjugating pair}
		\label{parity}
		We fix $\tau \in G'\setminus G$ for a moment, and we recall the definition of parity in \cite[Section 2.1]{Mie}.
		
		Let $(G, G')$ be a conjugating pair and $(\pi, V)$ be a conjugate self-dual representation with respect to $(G, G')$.
		Then, there exists a non-degenerate bilinear pairing $\langle \cdot, \cdot \rangle$$\colon$$V \times V \to \Cbb$ satisfying $\langle \pi(\tau(g))(x), \pi(g)(y) \rangle$ = $\langle x, y \rangle$ for all $x,y \in V$ and $g \in G$. By Schur's lemma, such a pairing is unique up to scalar multiplication.
		
		Since our setting is now a special case of \cite[Section 2.1]{Mie}, we can use the results proved there.
    For example, we have

        \begin{lemma}
        There exists $C(G,G',\pi) \in \{\pm1\}$ such that 
        $$\langle \pi(\tau^2)y, x \rangle = C(G,G',\pi) \langle x, y \rangle.$$
        It does not depend on which $\tau \in G'\setminus G$ we take.
        \end{lemma}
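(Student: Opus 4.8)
The plan is to produce the constant by a Schur-type uniqueness argument applied to a ``flipped'' pairing, and then to pin down its square and its independence of $\tau$ by direct manipulations of the defining invariance of $\langle\cdot,\cdot\rangle$.

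First I would rewrite the invariance relation in the more convenient form
$$ \langle \pi(\tau(g))x, y\rangle = \langle x, \pi(g)^{-1}y\rangle \qquad (g\in G,\ x,y\in V), $$
obtained by substituting $\pi(g)^{-1}y$ for $y$. Fix $\tau\in G'\setminus G$; since $[G':G]=2$ we have $\tau^2\in G$, and $\Int(\tau)$ fixes $\tau^2$. I would then introduce the bilinear form $\langle\langle x,y\rangle\rangle:=\langle \pi(\tau^2)y,x\rangle$, which is non-degenerate because $\pi(\tau^2)$ is invertible, and check that it satisfies the \emph{same} invariance as $\langle\cdot,\cdot\rangle$. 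This is the first real computation: writing $\tau^2 g=\tau(g_0)$ with $g_0=\tau g\tau\in G$ and noting $g_0^{-1}\tau(g)=\tau^{-2}$, the expression $\langle\langle\pi(\tau(g))x,\pi(g)y\rangle\rangle$ unwinds to $\langle y,\pi(\tau^{-2})x\rangle$ by invariance of $\langle\cdot,\cdot\rangle$, and the displayed formula above applied with $g=\tau^2$ (using $\Int(\tau)(\tau^2)=\tau^2$) rewrites this as $\langle\pi(\tau^2)y,x\rangle=\langle\langle x,y\rangle\rangle$.

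Granting this, the uniqueness of an invariant non-degenerate pairing up to scalars --- the statement quoted just before the lemma --- provides a unique $C:=C(G,G',\pi)\in\Cbb^{\times}$ with $\langle\pi(\tau^2)y,x\rangle=C\langle x,y\rangle$ for all $x,y\in V$. To see $C^2=1$ I would apply this identity with $y$ replaced by $\pi(\tau^2)x$, getting $\langle\pi(\tau^2)y,\pi(\tau^2)x\rangle=C^2\langle y,x\rangle$, and compare with $\langle\pi(\tau^2)y,\pi(\tau^2)x\rangle=\langle y,x\rangle$, which is exactly the defining invariance evaluated at $g=\tau^2$ together with $\Int(\tau)(\tau^2)=\tau^2$; non-degeneracy then forces $C^2=1$.

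For independence of $\tau$, I would write an arbitrary element of $G'\setminus G$ as $\tau'=h\tau$ with $h\in G$, observe that $\langle x,y\rangle':=\langle\pi(h)^{-1}x,y\rangle$ is a non-degenerate pairing satisfying the invariance attached to $\tau'$ (since $\tau'(g)=h\,\tau(g)\,h^{-1}$ and $\pi(h)^{-1}\pi(h\tau(g)h^{-1})=\pi(\tau(g))\pi(h)^{-1}$), and then compute the constant attached to $\tau'$ from the identity $\tau'^2=h\,\Int(\tau)(h)\,\tau^2$: after simplifying and applying both the identity defining $C$ and the displayed invariance formula once more, $\langle\pi(\tau'^2)y,x\rangle'$ becomes $C\langle x,y\rangle'$, so the constant for $\tau'$ is again $C$. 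The only genuine source of friction here is bookkeeping with $\Int(\tau)$ versus $\Int(\tau^{-1})$; there is no substantive obstacle, which is consistent with the fact that the lemma is the special case of \cite[Section~2.1]{Mie} invoked in the text.
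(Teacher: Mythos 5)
Your argument is correct, and it is genuinely more self-contained than what the paper does: the paper's ``proof'' of this lemma is a one-line citation to Lemmas 2.1 and 2.5 of Mieda's paper (taking $t=\tau^2$ there), whereas you reconstruct the full Schur-type argument from scratch. I checked the three computations that carry the weight. The flipped pairing $\langle\langle x,y\rangle\rangle=\langle\pi(\tau^2)y,x\rangle$ does satisfy the same invariance: with $g_0=\tau g\tau\in G$ one has $\Int(\tau)(g_0)=\tau^2g$ and $\pi(\tau g\tau^{-1})x=\pi(g_0)\pi(\tau^{-2})x$, so the invariance of $\langle\cdot,\cdot\rangle$ collapses $\langle\langle\pi(\tau(g))x,\pi(g)y\rangle\rangle$ to $\langle y,\pi(\tau^{-2})x\rangle=\langle\pi(\tau^2)y,x\rangle$, exactly as you say; Schur's uniqueness (which the paper has already invoked for the invariant pairing, using admissibility of a conjugate self-dual $\pi$) then yields $C$. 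The identity $\langle\pi(\tau^2)y,\pi(\tau^2)x\rangle=C^2\langle y,x\rangle$ versus the invariance at $g=\tau^2$ (where $\Int(\tau)(\tau^2)=\tau^2$) gives $C^2=1$. For independence of $\tau$, the pairing $\langle\pi(h)^{-1}x,y\rangle$ is indeed the invariant pairing for $\tau'=h\tau$, and the factorization $\tau'^2=h\,\Int(\tau)(h)\,\tau^2$ combined with your displayed invariance formula at $g=h$ returns the same constant $C$; since the constant is insensitive to rescaling the pairing, this pins down $C'=C$. The two approaches buy different things: the citation keeps the paper short and leans on Mieda's more general framework (arbitrary $t$ in a coset), while your version makes the section logically independent of \cite{Mie} at the cost of the bookkeeping you flag.
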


		\begin{proof}
		If we take $\tau^2$ as the element $t$ in \cite{Mie}, it follows from Lemmas 2.1 and 2.5 $loc. \ cit.$ 
		\end{proof}
		
		\begin{definition}
		We call the above constant $C(G,G',\pi)$ the parity of $\pi$ with respect to $(G, G')$.
    The representation $\pi$ is said to be conjugate orthogonal (resp. conjugate symplectic) if the parity is equal to $1$ (resp. $-1$).
		\end{definition}
		
		The next lemma follows trivially from the definition of parity.

		\begin{lemma}[{\cite[Lemma 2.5]{Mie}}]
		\label{multi}
    Let $(G, G')$ and $(H, H')$ be conjugating pairs and let $\pi$ and $\rho$ be conjugate self-dual representations of $G$ and $H$, respectively.
    Then, the exterior tensor product $\pi \boxtimes \rho$ is a conjugate self-dual representation of $G \times H$, and furthermore, the parity of  $\pi \boxtimes \rho$ is the product of the parity of $\pi$ and $\rho$.
		\end{lemma}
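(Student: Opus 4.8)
The plan is to begin by making precise the conjugating pair structure on $G \times H$ that is implicitly meant here: neither $G' \times H'$ (which has index $4$ over $G \times H$) nor either factor alone works, so I would take
$$ K = \{(g',h') \in G' \times H' : g' \in G \Longleftrightarrow h' \in H\}, $$
the preimage of the diagonal copy of $\Zbb/2\Zbb$ under the surjection $G' \times H' \to (G'/G) \times (H'/H) \cong \Zbb/2\Zbb \times \Zbb/2\Zbb$. Then $(G \times H, K)$ is a conjugating pair, and every $\tau \in K \setminus (G\times H)$ has the form $\tau = (\tau_G, \tau_H)$ with $\tau_G \in G' \setminus G$ and $\tau_H \in H' \setminus H$. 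This is the structure one later uses when applying the lemma to exterior products of local components.

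Next I would check conjugate self-duality. Since $\Int(\tau)(g,h) = (\Int(\tau_G)(g), \Int(\tau_H)(h))$, one gets $(\pi\boxtimes\rho)^{\tau} = \pi^{\tau_G}\boxtimes\rho^{\tau_H}$; as exterior tensor product commutes with the contragredient, $(\pi\boxtimes\rho)^{\vee} \cong \pi^{\vee} \boxtimes \rho^{\vee}$, and the hypotheses $\pi^{\tau_G}\cong\pi^{\vee}$, $\rho^{\tau_H}\cong\rho^{\vee}$ then give $(\pi\boxtimes\rho)^{\tau} \cong (\pi\boxtimes\rho)^{\vee}$. Here one uses that $\pi\boxtimes\rho$ is smooth irreducible — which holds because conjugate self-dual representations are admissible, as recorded in the remark above, so the standard irreducibility argument for exterior tensor products applies — so that the notion of parity is defined for it.

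For the parity computation I would take the $\tau_G$-invariant pairing $\langle\cdot,\cdot\rangle_{\pi}$ on $V_{\pi}$ and the $\tau_H$-invariant pairing $\langle\cdot,\cdot\rangle_{\rho}$ on $V_{\rho}$, and form $\langle v\otimes w,\, v'\otimes w'\rangle = \langle v,v'\rangle_{\pi}\,\langle w,w'\rangle_{\rho}$ on $V_{\pi}\otimes V_{\rho}$. This pairing is non-degenerate and satisfies the required $\tau$-invariance, so by the uniqueness up to scalar it is the pairing computing the parity of $\pi\boxtimes\rho$. Evaluating $\langle (\pi\boxtimes\rho)(\tau^2)(v'\otimes w'),\, v\otimes w\rangle$ and using $\tau^2 = (\tau_G^2, \tau_H^2)$ together with the defining relations for $C(G,G',\pi)$ and $C(H,H',\rho)$ produces exactly the factor $C(G,G',\pi)\,C(H,H',\rho)$.

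I do not expect a genuine obstacle: the statement is essentially bookkeeping. The only points that need care are the correct formulation of the conjugating pair on the product (the diagonal subgroup $K$, rather than $G'\times H'$), and the appeal to admissibility to know that $\pi\boxtimes\rho$ remains irreducible so that its parity is well-defined.
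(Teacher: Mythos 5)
Your proposal is correct and is exactly the routine verification that the paper (following Mieda's Lemma 2.5) leaves as ``trivial from the definition'': the diagonal index-two subgroup $K \subset G' \times H'$ is indeed the intended conjugating pair on $G \times H$, and the tensor product of the two invariant pairings computes the parity as you describe. No further comment is needed.
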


        \begin{example}
        If we take a conjugating pair as in Example \ref{Weil}, we obtain the notion of conjugate self-duality and parity defined in \cite[Section 2]{GGP}.
        Similarly, if we take such a pair as in Example \ref{conj}, we obtain the usual notion of conjugate self-duality and parity.
        \end{example}

        \begin{remark} 
        \label{realcase}
		We also treat conjugate self-dual representations of archimedean groups (strictly speaking, $(\mathfrak{g},K)$-modules) in this paper. The definition above is easily extended to the archimedean case, i.e. the case where $G$ and $G'$ are reductive Lie groups, by applying Dixmier's lemma instead of Schur's lemma.
		\end{remark}

\section{Central simple algebra settings}
   \subsection{A construction of conjugating pairs}
        
        In this section, we construct conjugating pairs from quadratic extensions of fields and central simple algebras over those fields. 
        As we noted in the introduction, we need to construct conjugating pairs in both local and global situations to apply the globalizing results. For this reason, we consider general quadratic extensions.
        
        Let $E/F$ be a field extension of finite degree $n \in \Zbb_{>0}$. 
        We do not assume that $E$ or $F$ to be local fields or global fields.
        In addition, let $A$ be a central simple algebra over $E$ and $B$ be a central simple algebra over $F$. 
        
        The special case $A=E$ of the following lemma is related to the splitting subfield of a central simple algebra.

        \begin{lemma}
        \label{keylemma1}
        Assume that we have an $E$-algebra isomorphism $ B \otimes_{F} E \cong M_n(A)$.
        Then, we have an $F$-algebra homomorphism $A \to B$.    
        \end{lemma}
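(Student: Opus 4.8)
The plan is to show that the hypothesis forces the central simple $E$-algebra $A \otimes_F B^{\op}$ to be split, and then to extract the homomorphism $A \to B$ from the unique simple module of that split algebra by a dimension count.

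First I would compute $A \otimes_F B^{\op}$. Since $E$ is commutative, $E \otimes_F B^{\op} \cong (B \otimes_F E)^{\op}$, so the hypothesis gives $E \otimes_F B^{\op} \cong M_n(A)^{\op} \cong M_n(A^{\op})$ as $E$-algebras. Combining this with the standard isomorphism $A \otimes_E A^{\op} \cong \End_E(A)$ for the central simple $E$-algebra $A$, one obtains
\begin{align*}
  A \otimes_F B^{\op}
  &\cong A \otimes_E \bigl(E \otimes_F B^{\op}\bigr)
  \cong A \otimes_E M_n(A^{\op})
  \cong M_n\bigl(A \otimes_E A^{\op}\bigr) \\
  &\cong M_n\bigl(\End_E(A)\bigr)
  \cong M_{\,n\dim_E A}(E).
\end{align*}
In particular $A \otimes_F B^{\op}$ is split over $E$, so it is isomorphic to $\End_E(U)$ for its (unique up to isomorphism) simple module $U$, which is an $E$-vector space of dimension $n\dim_E A$, hence of dimension $n^2\dim_E A$ over $F$. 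On the other hand, comparing $F$-dimensions in $B \otimes_F E \cong M_n(A)$ yields $\dim_F B = n^2\dim_E A$, so $\dim_F U = \dim_F B$.

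Next I would view $U$ as an $(A,B)$-bimodule via the two structure maps $A \to A \otimes_F B^{\op}$ and $B^{\op} \to A \otimes_F B^{\op}$; by construction the left $A$-action commutes with the right $B$-action. Since $B$ is simple Artinian, every finite-dimensional right $B$-module is a direct sum of copies of the unique simple right $B$-module, hence is determined up to isomorphism by its dimension over $F$; as $\dim_F U = \dim_F B$, this forces $U \cong B$ as right $B$-modules. Fixing such an isomorphism and using that the ring of right-$B$-module endomorphisms of $B$ is $B$ itself, acting by left multiplication, the left $A$-action on $U$ transports to a ring homomorphism $A \to B$; it is $F$-linear and unital, hence an $F$-algebra homomorphism, and it is injective because $A$ is simple.

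I expect the only genuinely substantive point to be the splitting computation above — that is, recognizing that $B \otimes_F E \cong M_n(A)$ is precisely the condition making $A \otimes_F B^{\op}$ split over $E$ with simple module of $F$-dimension exactly $\dim_F B$. Everything afterwards only uses that modules over a simple Artinian ring are pinned down by their dimension and that the endomorphism ring of $B$ as a regular right module is $B$ again; in particular no separability or local–global input is needed, which is why the lemma holds for an arbitrary finite extension $E/F$.
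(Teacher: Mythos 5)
Your proof is correct, and it takes a genuinely different route from the paper's. The paper embeds $E$ into $M_n(F)$ via the regular representation, composes with the given isomorphism to get $\phi\colon A\otimes_F M_n(F)\to B\otimes_F M_n(F)$, invokes Skolem--Noether to conjugate $\phi$ so that it carries $F\otimes_F M_n(F)$ onto $F\otimes_F M_n(F)$, and then passes to centralizers of $M_n(F)$ on both sides to cut the map down to $A\to B$. You instead observe that the hypothesis forces $A\otimes_F B^{\op}$ to be split over $E$ with simple module $U$ of $F$-dimension exactly $\dim_F B$, view $U$ as an $(A,B)$-bimodule, identify $U\cong B$ as right $B$-modules by a dimension count over the simple Artinian ring $B$, and read off $A\to\End_B(B_B)=B$. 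Your computation $A\otimes_F B^{\op}\cong A\otimes_E M_n(A^{\op})\cong M_{n\dim_E A}(E)$ and the dimension bookkeeping are both right, and the reduction from "module of the correct dimension over $A\otimes_F B^{\op}$" to "algebra map $A\to B$" is the standard bimodule dictionary. What the paper's argument buys is brevity, given Skolem--Noether and the double centralizer theorem as black boxes; what yours buys is transparency about why the hypothesis is the right one — it isolates the class of $A\otimes_F B^{\op}$ in $\mathrm{Br}(E)$ together with a divisibility condition on dimensions as the actual obstruction, so the criterion is visibly close to sharp — and it works for an arbitrary finite extension $E/F$ without any appeal to conjugacy of embeddings. (The injectivity you note at the end is automatic for any unital homomorphism out of a simple algebra into a nonzero ring, so it is not an extra feature of your construction, but it does no harm.)
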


        \begin{proof}
        We take a basis of $E$ over $F$ to obtain a homomorphism of $F$-algebras $E \to M_n(F)$. By composing this map to the given isomorphism, we obtain an $F$-algebra homomorphism
        $$\phi \colon A \otimes_{F} M_n(F) \cong B \otimes_{F} E \to B \otimes_{F} M_n(F) .$$
        
        We use the Skolem-Noether theorem for central simple $F$-algebras to take $c \in (B \otimes_{F} M_n(F))^{\times} $ such that 
        $$(\Int(c) \circ \phi)(F \otimes_{F} M_n(F))=F \otimes_{F} M_n(F).$$
        By considering the centralizers of $F \otimes_{F} M_n(F)$ both in $A \otimes_{F} M_n(F)$ and $B \otimes_{F} M_n(F)$, we finally get an $F$-algebra homomorphism
        $$\Int(c) \circ \phi \colon A \to B.$$       
        \end{proof}

        \begin{remark}
        By the Skolem-Noether theorem for central simple $F$-algebras, two homomorphisms as in this lemma are conjugate under an inner automorphism of $B$.
        \end{remark}

        In the following of this section, we take $E/F$ as a quadratic extension of fields (hence we take $n=2$) and we assume that the pair $(A, B)$ satisfies the assumption of Lemma \ref{keylemma1}. 
        Under this assumption, we obtain a homomorphism $\phi \colon A \to B$ over $F$, and for a moment, we fix that homomorphism.
        
        Let $\Gbb$ denote the reductive group $\Res_{E/F}(A^{\times})$ over $F$, and $\Hbb$ denote the reductive group $B^{\times}$ over $F$.

        \begin{lemma}
        There exists an exact sequence of algebraic $F$-groups
        $$ 1 \to \Gbb \xrightarrow{\phi}{\Norm_{\Hbb}{\Gbb}} \to \underline{\Zbb/2\Zbb}_{F} \to 1.$$
        Here, $\underline{\Zbb/2\Zbb}_{F}$ denotes the constant group scheme associated with $\Zbb/2\Zbb$.
        \end{lemma}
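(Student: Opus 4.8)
The plan is to construct the surjection $\Norm_{\Hbb}(\Gbb)\to\underline{\Zbb/2\Zbb}_F$ directly, by making an element that normalizes $\Gbb$ act by conjugation on the centre of $A$; the kernel will then be forced to be $\Gbb$. Throughout I identify $\Gbb$ with its image $\phi(\Gbb)$, a closed $F$-subgroup scheme of $\Hbb$ (the map $\phi\colon\Gbb\to\Hbb$ is a monomorphism of smooth affine $F$-groups, hence a closed immersion). The one piece of ring theory to isolate first is the identity $\Cent_{B}(\phi(E))=\phi(A)$, with its base changes: identifying $A=\phi(A)\subseteq B$ we have $E=Z(A)\subseteq A\subseteq\Cent_B(E)$, and the double centralizer theorem for the simple subalgebra $E$ of the central simple $F$-algebra $B$ shows that $\Cent_B(E)$ is simple with centre $Z(E)=E$ and $[\Cent_B(E):F]=[B:F]/[E:F]$, whence $[\Cent_B(E):E]=[B:F]/4=\dim_E(B\otimes_F E)/4=\dim_E M_2(A)/4=\dim_E A$; as $A\subseteq\Cent_B(E)$ is an inclusion of $E$-vector spaces of equal finite dimension, it is an equality. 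Since centralizers are cut out by linear equations, they commute with the flat base change $R/F$, so $\Cent_{B\otimes_F R}(E\otimes_F R)=A\otimes_F R$ for every $F$-algebra $R$.

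Next I would build the homomorphism $\nu$. For an $F$-algebra $R$, an element $b\in\Norm_{\Hbb}(\Gbb)(R)$ normalizes the subgroup scheme $\Gbb_R$, hence normalizes the $R$-subalgebra $A\otimes_F R$ (which is spanned over $R$ by the units $(A\otimes_F R)^{\times}$ that $b$ stabilizes, since $A^{\times}$ spans $A$ over $F$); so $\Int(b)$ induces an $R$-algebra automorphism of the centre $E\otimes_F R$ of $A\otimes_F R$. Since $E/F$ is a separable quadratic extension (as it always is in our applications), the automorphism group scheme $\underline{\Aut}_{F\text{-alg}}(E)$ is the constant group scheme $\underline{\Zbb/2\Zbb}_F$, with distinguished nontrivial section $\conj$; sending $b$ to the class of $\Int(b)|_{E\otimes_F R}$ thus defines a homomorphism of $F$-group schemes $\nu\colon\Norm_{\Hbb}(\Gbb)\to\underline{\Zbb/2\Zbb}_F$. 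By the centralizer identity, $b\in\ker\nu(R)$ precisely when $b$ centralizes $E\otimes_F R$, i.e. $b\in(\Cent_B(E)\otimes_F R)^{\times}=(A\otimes_F R)^{\times}=\phi(\Gbb)(R)$; hence $\ker\nu=\phi(\Gbb)\cong\Gbb$.

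It then remains to see that $\nu$ is surjective, for which (the target being étale) it suffices to produce one $F$-point of $\Norm_{\Hbb}(\Gbb)$ mapping to $\conj$. Here I would invoke the Skolem--Noether theorem applied to the two $F$-algebra embeddings $\phi|_E$ and $\phi|_E\circ\conj$ of the field $E$ into the central simple $F$-algebra $B$: there is $b\in B^{\times}$ with $\Int(b)\circ\phi|_E=\phi|_E\circ\conj$. Then $\Int(b)$ stabilizes $\phi(E)$, hence stabilizes $\Cent_B(\phi(E))=\phi(A)$, so $b\in\Norm_{B^{\times}}(\phi(A))=\Norm_{\Hbb}(\Gbb)(F)$ while $\nu(b)=\conj\neq 1$; combined with the computation of the kernel this yields the asserted short exact sequence of algebraic $F$-groups.

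I expect the genuinely substantive step to be this last one---producing an element of $B^{\times}$ that normalizes $A$ and induces $\conj$ on its centre. The mild subtlety is that Skolem--Noether directly supplies only an element normalizing the centre $\phi(E)$, and promoting it to one normalizing all of $\phi(A)$ is exactly where the identification $\phi(A)=\Cent_B(\phi(E))$ is used. Everything else---representability of the normalizer as an affine $F$-group scheme, compatibility of the constructions with the base change $R/F$, and the structure of $\underline{\Aut}_{F\text{-alg}}(E)$ for separable quadratic $E$---is standard, and I would dispatch it quickly.
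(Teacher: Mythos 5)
Your proof is correct, but it runs in the opposite direction from the paper's. The paper forms the quotient $\Qcal$ abstractly, base changes to $\overline{F}$, uses Skolem--Noether over $\overline{F}$ to reduce $\phi_{\overline{F}}$ to the block-diagonal embedding $\GL_n\times\GL_n\hookrightarrow\GL_{2n}$, reads off that the normalizer of this maximal Levi has component group $\Zbb/2\Zbb$, and then descends (the quotient is automatically the constant form since $\Zbb/2\Zbb$ has no nontrivial twists). You instead stay over $F$: you build the map to $\underline{\Zbb/2\Zbb}_F$ directly as the action of the normalizer on $Z(A)=E$, compute its kernel via the double centralizer identity $\Cent_B(E)=A$, and get surjectivity from Skolem--Noether applied to $\phi|_E$ and $\phi|_E\circ\conj$ over $F$ itself. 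Your route avoids having to represent or identify the fppf quotient, and it proves more: the identity $\Cent_B(E)=A$ and the surjectivity of $\Norm_H G\to\Gal(E/F)$ are exactly the content of the paper's Lemma \ref{galconj}, which the paper establishes separately (and whose proof of surjectivity actually relies on the exact sequence already being known); your argument delivers the exact sequence and that lemma in one pass. The paper's route is shorter if one takes the structure of normalizers of Levi subgroups for granted. Both arguments need $E/F$ separable (yours for $\underline{\Aut}_{F\text{-alg}}(E)\cong\underline{\Zbb/2\Zbb}_F$, the paper's for $\Gbb_{\overline F}\cong\GL_n\times\GL_n$), which you correctly flag and which is harmless in the paper's characteristic-zero setting.
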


        \begin{proof}
        Let $\Qcal$ denote the quotient of the second map of sequences.
        Then we obtain the exact sequence 
        $$ 1 \to \Gbb \xrightarrow{\phi}{\Norm_{\Hbb}{\Gbb}} \to \Qcal \to 1.$$
        We consider the base change of this sequence to an algebraic closure $\overline{F}$ of $F$. 
        Then this sequence becomes 
         $$ 1 \to \GL_n({\overline{F}})\times\GL_n({\overline{F}})  \xrightarrow{\phi_{\overline{F}}}{\Norm_{\GL_{2n}({\overline{F}})}({\GL_n({\overline{F}})\times\GL_n({\overline{F}}) })} \to \Qcal_{\overline{F}} \to 1,$$
        where $n$ is the rank of $A$ over $E$.
        
        We claim that $\Qcal_{\overline{F}}$ is isomorphic to $\underline{\Zbb/2\Zbb}_{\overline{F}}.$
        By using the Skolem-Noether theorem over $\overline{F}$, we may assume that $\phi_{\overline{F}}$ is the diagonal embedding. Then the image of this embedding in $\GL_{2n}({\overline{F}})$ is a maximal Levi subgroup, the claim then follows easily. 
        
        The lemma follows from this claim and the fact that $\Qcal$ has the identity element as an $F$-valued point.
        \end{proof}
    
        Furthermore, we assume that $F$ is infinite in the following argument. Let $G$ denote $\Gbb(F)$ and also $H$ denote $\Hbb(F)$. Then, by taking the long exact sequence of Galois cohomology of the above and applying Hilbert's Theorem 90 \cite[Lemma 2.7.4]{GS}, we obtain$\colon$
        
        \begin{corollary}
        \label{conjpair}
        Assume that $F$ is infinite.
        Then there exists an exact sequence 
        $$ 1 \to G \xrightarrow{\phi}{\Norm_{H}{G}} \to {\Zbb/2\Zbb} \to 1.$$
        \end{corollary}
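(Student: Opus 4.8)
The plan is to obtain the asserted exact sequence of abstract groups by applying the functor of $F$-points to the exact sequence of algebraic $F$-groups
$$ 1 \to \Gbb \xrightarrow{\phi} \Norm_{\Hbb}\Gbb \to \underline{\Zbb/2\Zbb}_{F} \to 1 $$
from the previous lemma, and then feeding it into the long exact sequence of Galois cohomology. Two points have to be settled: first, that the abstract normalizer $\Norm_{H}G$ coincides with the $F$-points $\Norm_{\Hbb}\Gbb(F)$ of the group scheme; second, that the induced map $\Norm_{\Hbb}\Gbb(F) \to (\underline{\Zbb/2\Zbb}_{F})(F) = \Zbb/2\Zbb$ is surjective.

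For the first point, I would argue by Zariski density. The group $\Gbb = \Res_{E/F}(A^{\times})$ is a connected reductive group, hence a smooth connected rational $F$-variety, so that $G = \Gbb(F)$ is Zariski dense in $\Gbb$ precisely because $F$ is infinite. The inclusion $\Norm_{\Hbb}\Gbb(F) \subseteq \Norm_{H}G$ is immediate. Conversely, if $h \in H = \Hbb(F)$ satisfies $hGh^{-1} = G$, then $\Int(h)(\Gbb)$ and $\Gbb$ are closed subgroup schemes of $\Hbb$ having the same (Zariski-dense) set of $F$-points, hence $\Int(h)(\Gbb) = \Gbb$, i.e. $h \in \Norm_{\Hbb}\Gbb(F)$. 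This is the only place where the hypothesis that $F$ be infinite is used.

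For the second point, the short exact sequence of smooth $F$-group schemes above induces an exact sequence of groups and pointed sets
$$ 1 \to G \to \Norm_{\Hbb}\Gbb(F) \to \Zbb/2\Zbb \xrightarrow{\delta} H^1(F, \Gbb). $$
By Shapiro's lemma, $H^1(F, \Gbb) = H^1(F, \Res_{E/F}(A^{\times})) \cong H^1(E, A^{\times})$, and this pointed set is trivial by the form of Hilbert's Theorem 90 for the unit group of a central simple algebra \cite[Lemma 2.7.4]{GS}. Hence $\delta$ is the trivial map, the map $\Norm_{\Hbb}\Gbb(F) \to \Zbb/2\Zbb$ is onto, and combining with the first point we obtain the exact sequence $1 \to G \to \Norm_{H}G \to \Zbb/2\Zbb \to 1$. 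The only genuinely delicate step is the identification of the abstract normalizer with the scheme-theoretic one via Zariski density; the rest is formal manipulation of the cohomology sequence together with Shapiro and Hilbert 90.
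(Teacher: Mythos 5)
Your proposal is correct and follows essentially the same route as the paper: identify $\Norm_{H}G$ with $(\Norm_{\Hbb}\Gbb)(F)$ by Zariski density of $G$ in $\Gbb$ (the paper cites \cite[Corollary 13.3.9]{Springer} for this), then run the long exact sequence in Galois cohomology and kill $H^1(F,\Gbb)$ via (Shapiro and) the generalized Hilbert 90 for $A^{\times}$. The only difference is that you spell out the Shapiro reduction and the density argument explicitly, which the paper leaves implicit.
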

        
        \begin{proof}
        It suffices to check that $\Norm_{H}{G}$ = $(\Norm_{\Hbb}{\Gbb})(F)$. This equality follows from the density result in \cite[Corollary 13.3.9]{Springer}.
        \end{proof}
        
        Hence in the above setting, we can obtain the conjugating pair $(G, \Norm_{H}{G})$. 
        
        \begin{definition}
        The above conjugating pair is said to be associated with $(A, B, \phi)$.
        \end{definition}

        \begin{remark}
        By the theorem of Skolem-Noether over $F$, the notion of parity and conjugate self-duality with respect to $(G, \Norm_{H}{G})$ does not depend on how to take $\phi$. So in the following argument, we omit $\phi$.
        \end{remark}

        By the Zariski density of $G$ in $\Abb$, that is $A$ viewed as a scheme over $F$, $\Norm_{H}{G}$ = $\Norm_{H}{\Abb}$ = $\Norm_{H}{A}$.
        Hence, each element of $\Norm_{H}{G}$ induces an automorphism of $A$ and hence of its center $Z(A)=E$. So we have a map
        $$\Norm_{H}{G} \to \Gal(E/F).$$

        \begin{lemma}
        \label{galconj}
        The map above is surjective.
        \end{lemma}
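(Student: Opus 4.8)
The plan is to exhibit, for the non-trivial element $\conj$ of $\Gal(E/F)$, an element of $\Norm_H G$ that maps to it; the identity of $\Gal(E/F)$ is of course the image of $1 \in \Norm_H G$. Throughout I identify $A$ with its image $\phi(A)$ in $B$ and $E = Z(A)$ with its image in $B$, so that the map under consideration sends $h \in \Norm_H G$ to the $F$-linear automorphism of $E$ induced by $\Int(h)$.

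The first and main step is the identification of $A$ with the centralizer $\Cent_B(E)$ of $E$ in $B$. The inclusion $A \subseteq \Cent_B(E)$ is clear, since $E$ is central in $A$. For the reverse inclusion I would compare $F$-dimensions: from the hypothesis $B \otimes_F E \cong M_2(A)$ one has $2\dim_F B = \dim_F(B \otimes_F E) = \dim_F M_2(A) = 4\dim_F A$, hence $\dim_F B = 2\dim_F A$; and the double centralizer theorem, applied to the simple $F$-subalgebra $E$ of the central simple $F$-algebra $B$, gives $\dim_F \Cent_B(E) = \dim_F B / [E:F] = \dim_F A$. Therefore $A = \Cent_B(E)$.

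The second step is to produce $h \in B^\times$ with $h x h^{-1} = \conj(x)$ for every $x \in E$. The inclusion $E \hookrightarrow B$ and its composite with $\conj \colon E \to E$ are two homomorphisms of $F$-algebras from the simple algebra $E$ to the central simple $F$-algebra $B$, so the Skolem-Noether theorem provides exactly such an $h$. Since then $h E h^{-1} = E$ as subsets of $B$, we get
$$ h A h^{-1} = h\,\Cent_B(E)\,h^{-1} = \Cent_B(h E h^{-1}) = \Cent_B(E) = A, $$
so $h$ normalizes $A$, hence also $A^\times = G$, and therefore $h \in \Norm_H G$. By construction the automorphism of $E$ it induces is $\conj$, which proves surjectivity.

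I do not expect a genuine obstacle here: the content is the observation that $A$ is intrinsically recovered from $E \subseteq B$ as a centralizer, so that any element realizing $\conj$ on $E$ automatically normalizes $G$, after which Skolem-Noether supplies such an element. The one point needing care is the dimension count underlying $\Cent_B(E) = A$; alternatively one could transport the $\conj$-semilinear ring automorphism $1 \otimes \conj$ of $B \otimes_F E \cong M_2(A)$ back to $A$, but the centralizer argument is shorter and makes transparent why the resulting $h$ lies in $\Norm_H G$.
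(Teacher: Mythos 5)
Your proof is correct, and its central step — identifying $A$ with $\Cent_B(E)$ via the double centralizer dimension formula $[E:F][\Cent_B(E):F]=[B:F]$ — is exactly the paper's key step. Where you diverge is in how surjectivity is then extracted. The paper does not construct anything new: it invokes the already-established exact sequence (Corollary \ref{conjpair}), which guarantees that $\Norm_H G\setminus G$ is non-empty, and observes that any element of this complement must induce the non-trivial Galois automorphism, since an element of $\Norm_H G$ acting trivially on $E$ would lie in $\Cent_B(E)^{\times}=A^{\times}=G$. You instead apply Skolem--Noether to the two embeddings $E\hookrightarrow B$ (the inclusion and its twist by $\conj$) to manufacture an explicit $h\in B^{\times}$ realizing $\conj$, and then use $A=\Cent_B(E)$ to see that $h$ automatically normalizes $G$. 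Your version is slightly more self-contained (it does not depend on the index-two statement proved earlier via the group-scheme argument and Hilbert 90), while the paper's is shorter given what has already been established; both are valid, and the centralizer identification is doing the real work in either case.
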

              
        \begin{proof}
        Since $E$ is a simple algebra over $F$ and $B$ is a central simple algebra over $F$, we have a dimension formula $[E:F][\Cent_{B}(E):F]=[B:F]$ (see \cite[Theorem 2.43]{KnappAA}).
        As $\Cent_{B}(E)$ contains $A$, the equation above shows that they have the same dimensions over $F$.
        It follows that $\Cent_{B}(E)$ = $A$.
        Hence, all the elements of $\Norm_{H}{G} \setminus G$ map to the non-trivial element of the Galois group, and the map above is surjective.
        \end{proof}
        
        By Corollary \ref{conjpair} and Lemma \ref{galconj}, we have obtained an automorphism of $A$ that induces the non-trivial Galois involution on $Z(A)=E$.
        
        \begin{remark}
        Note that this automorphism induces a non-trivial Galois action on the set of isomorphism classes of the irreducible discrete series representations of $\GL_n(E)$ through the local Jacquet-Langlands correspondence.
        \end{remark} 
     
        \begin{remark}
        The sequence in Lemma \ref{conjpair} does not split in general. Indeed, if the sequence splits, it is necessary that $A$ has an involution that induces the non-trivial Galois involution on $Z(A)=E$. Then by the Galois descent, $A$ has to be a base change of a central simple algebra over $F$. However, that cannot be true if we take any quaternion algebra over $E$.
        \end{remark}

   \subsection{Notations in local and global settings}
     \label{notation}   
        In this subsection, we fix notations for pairs of central simple algebras over both local fields and number fields used in our globalizing method.
        
        In the following of this paper, we only treat number fields and $p$-adic and archimedean local fields, so all the fields below are assumed to be of characteristic zero unless otherwise stated.
  
        First, we treat the case of extensions of local fields. 
        
        \begin{construction}
        \label{locext} 
         Let $E/F$ be a quadratic extension of local fields. 
         In addition, let $A=M_m(D)$ be a central simple algebra of rank $n=md$ over $E$.
         If the rank d of $D$ is strictly larger than one, we write $s/d$ for the Hasse invariant of $A$ in the way that $s$ satisfies $0 < s < d$ and $\gcd(s, d)=1$. 
         Let $B$ be the central simple algebra over $F$ of rank $2n$ whose Hasse invariant is $s/2d$. If $d = 1$, we take $s = 0$ and let $B$ be the central simple algebra $M_{2n}(F)$ over $F$.
        
        Then, we have $B \otimes_{F} E \cong M_2(A)$ and obtain the conjugating pair $(G, G')$ associated with $(A, B)$ using the results in the previous subsection.
        \end{construction}

        Next, we treat the case of quadratic split \'etale algebras.
        Let $E$ be a quadratic split \'etale algebra $F \times F$ over $F$. 
 
        \begin{construction}
        \label{locspl}
        We associate the conjugating pair of Example \ref{switch}.
        \end{construction}
   
        Finally, we consider a conjugating pair in a global setting.
        We only use the following special constructions.
        
        Let $L/K$ be a quadratic extension of number fields and $v$ be a finite place of $K$ that does not split in this extension. 
        Let $L_{v}$ denote the completion of $L$ with respect to the place $v$ of $L$.
        We assume that we are given a central simple algebra $A$ over $L_v$ of rank $n=md$ over $L_v$. We write $s/d$ for the Hasse invariant of $A$ as in Construction \ref{locext}.  
        
        We also assume that we are given $ms$ finite places $v_1, \ldots v_{ms}$ that also do not split and unramified in $L$, and central division algebras $A_i$ over $L_{v_i}$ whose Hasse invariant is equal to $-1/n$.
        
        \begin{construction}
        \label{global}
        By the theorem of Brauer-Hasse-Noether (\cite[p.184 Theorem]{Mum}, \cite[XIII \S 6, Theorem 4]{Wei}), we can globalize $(A, \{A_i\})$ uniquely to a central division algebra $\mathbf{D}$ over $L$ by declaring that we take split central simple algebras at other places.
        
        Moreover, by the similar construction taking $B$ of Construction \ref{locext} at $v$ and taking central division algebras $B_i$ over $K_{v_i}$ whose Hasse invariant is equal to $-1/2n$ for $i = 1, 2, \ldots ,{ms}$, we obtain a central division algebra $\mathbf{E}$ over $K$.
        
        Then we have $\mathbf{E} \otimes_{K} L \cong M_2(\mathbf{D})$, so we associate the conjugating pair $(G, G')$, where $G = \mathbf{D}^{\times}(\Abb_{L})$ and $G' = \Norm_{\mathbf{E}^{\times}}(\mathbf{D}^{\times})(K) G$, with $L/K$.
        \end{construction}
        
        \begin{remark}
        \label{globalaux}
        In general, if we have a quadratic extension of number fields $L/K$ and central simple algebras $\mathbf{A}$ over $L$ and $\mathbf{B}$ over $K$, which satisfy $\mathbf{B} \otimes_{K} L \cong M_2(\mathbf{A})$, we can do the same construction as above.
        \end{remark}
        
        Unless otherwise stated, we will use these notations of conjugating pairs and only consider the conjugate self-duality with respect to them.

\section{Global results what we use}

   \subsection{A globalizing result}
        
        We review a globalizing result for conjugate self-dual discrete series representations due to Mok \cite{Mok}.
        
        \begin{lemma}
         \label{globalizefld}
        Let $E/F$ be a quadratic extension of $p$-adic fields.
        There exists a quadratic extension $L/K$ of number fields such that $K$ is totally real, $L$ is totally imaginary, and also it has a finite place $v$ of $K$ that does not split in $L$ and the completion of $L/K$ at $v$ is $E/F$.

        \end{lemma}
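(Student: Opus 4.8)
The plan is to globalize in two stages: first realize $F$ as the completion at a finite place $v$ of a totally real number field $K$, and then realize $E=F(\sqrt{d})$ as the localization at $v$ of a quadratic extension $L=K(\sqrt{\delta})$, where $\delta$ is chosen to be negative at every real place of $K$.

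For the first stage I would write $F=\Qbb_p(\beta)$ with minimal polynomial $f\in\Zbb_p[x]$, monic of degree $n=[F:\Qbb_p]$, and fix a monic $g_\infty\in\Qbb[x]$ of degree $n$ with $n$ distinct real roots, e.g. $\prod_{i=1}^{n}(x-i)$. Having $n$ distinct real roots is an open condition on the coefficient vector in $\Rbb^n$, while being $p$-adically close enough to $f$ to apply Krasner's lemma is an open condition in $\Qbb_p^n$; since $p$ and the archimedean place are distinct places of $\Qbb$, weak approximation yields a monic $g\in\Qbb[x]$ of degree $n$ that is simultaneously $p$-adically close to $f$ and close to $g_\infty$ archimedeanly. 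Then $g$ is irreducible over $\Qbb_p$ (Krasner), a root $\beta'$ of $g$ in a fixed $\overline{\Qbb}_p$ satisfies $\Qbb_p(\beta')=F$, and $g$ has all of its roots real. Hence $K:=\Qbb(\beta')$ is a totally real number field of degree $n$ over $\Qbb$, $g$ is its minimal polynomial, and the place $v$ of $K$ induced by the inclusion $K\hookrightarrow F$, $\beta'\mapsto\beta'$, satisfies $K_v=F$.

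For the second stage, since $\mathrm{char}\,F=0$ I write $E=F(\sqrt{d})$ with $d\in F^{\times}\setminus(F^{\times})^2$. The subgroup $(F^{\times})^2$ is open in $F^{\times}$, so there is a $v$-adic neighbourhood $U$ of $d$ contained in $d\cdot(F^{\times})^2$. Applying weak approximation in $K$ to the places $v$ and the real places $\sigma_1,\dots,\sigma_n$ of $K$, I choose $\delta\in K^{\times}$ with $\delta\in U$ and $\sigma_i(\delta)<0$ for all $i$, and I set $L:=K(\sqrt{\delta})$. Since $\delta$ is not a square in $K_v=F$ it is not a square in $K$, so $L/K$ is quadratic; since $\sigma_i(\delta)<0$ for every real place $\sigma_i$ of $K$, no real place of $K$ extends to a real place of $L$, so $L$ is totally imaginary; and $L\otimes_K K_v\cong F[x]/(x^2-\delta)\cong F[x]/(x^2-d)\cong E$ is a field, so $v$ does not split in $L$ and the completion of $L/K$ at $v$ is $E/F$, as desired.

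The only delicate point — the hard part — is the first stage: producing a single rational polynomial that is $p$-adically close to $f$ (so that Krasner's lemma pins down the local field $F$) while having all real roots (so that the resulting number field is totally real). This is resolved by combining Krasner's lemma with weak approximation at the two places $p$ and $\infty$ of $\Qbb$; once $K$ and $v$ are in hand, the second stage is a routine use of weak approximation in $K$ together with the openness of $(F^{\times})^2$ in $F^{\times}$.
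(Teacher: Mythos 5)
Your proof is correct and is exactly the standard argument behind the paper's one-line proof ("an easy consequence of Krasner's lemma"): Krasner plus weak approximation at $p$ and $\infty$ to produce a totally real $K$ with $K_v=F$, then weak approximation in $K$ together with the openness of $(F^{\times})^2$ to produce $\delta$ giving a totally imaginary quadratic $L$ with $L\otimes_K K_v\cong E$. No gaps.
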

   
        \begin{proof}
         This is an easy consequence of Krasner's lemma.
        \end{proof}

      Following \cite[Sections 2.1 and 2.4]{Mok}, we introduce some notations about the base change map for the quasi-split unitary group.
      
      We first define Langlands parameters for reductive groups following \cite{BorelLfunctions} and \cite{AubertLL}.
      
      \begin{definition}
       \label{Lgroup}
       Let $F$ be a non-archimedean local field.
       We define $L_F$ as the direct product $W_F \times \SL_2(\Cbb)$, where $W_F$ be a Weil group of $F$.
       Let $G$ be a connected reductive group over $F$. We write ${}^LG$ for the $L$-group of $G$. 
       It is defined as follows$\colon$ Let $\Phi$ = $(X, \Delta, X^{\vee}, \Delta^{\vee})$ be the based root datum of $G$ and $\Phi^{\vee}$ = $(X^{\vee}, \Delta^{\vee}, X, \Delta)$ be its dual. 
       There exists the reductive group $G^{\vee}$ over $\Cbb$ whose root datum is equal to $\Phi^{\vee}$. 
       It is defined up to isomorphism and called the dual group of $G$. 
       If we fix a splitting of $G^{\vee}$, the action of $W_F$ on $\Phi$ induces an action of $W_{F}$ on $G^{\vee}$. 
       The semi-direct product defined by this action is the $L$-group of $G$.
       \end{definition}
      
       \begin{definition}
        \label{Lparameter}
        A Langlands parameter for ${}^LG$ is a continuous homomorphism 
         $$ \phi \colon L_F \to {}^LG $$
        such that
        \begin{itemize}
         \item $\phi$ commutes with the projections to $W_F$. 
         \item $\phi(w)$ is semisimple for all $w \in W_F$.
         \item $\phi_{ | \SL_2(\Cbb)}$ is algebraic.
        \end{itemize}
      
        We define $\Phi({}^LG)$ as the set of $G^{\vee}$-conjugacy classes of Langlands parameters for ${}^LG$.
       \end{definition}

       \begin{definition}
       A parabolic subgroup $P^{\vee}$ of $G^{\vee}$ is $F$-relevant if the set of  simple roots attached to $P^{\vee}$ corresponds to a parabolic subgroup of $G$ defined over $F$. 
       A parabolic subgroup $P^{\vee}$ of $G^{\vee}$ is said to be $W_F$-quasi-stable if $\Norm_{{}^LG}(P^{\vee}) \to W_F$ is surjective.
       
       Let $\phi$ be a Langlands parameter for ${}^LG$. 
       Let $P^{\vee}$ be a $W_F$-quasi-stable parabolic subgroup with Levi factor $L^{\vee}$ such that Im$(\phi)$ is contained in $\Norm_{{}^LP}(L^{\vee})$, where ${}^LP$ = $W_F \ltimes P^{\vee}$, and $P^{\vee}$ is minimal with this property. 
       The parameter $\phi$ is said to be $G$-relevant if $P^{\vee}$ is $F$-relevant. 
       
       Let $\Phi(G)$ be the subset of $G$-relevant Langlands parameters for $\Phi({}^LG)$.
       \end{definition}
      
       \begin{definition}
       A Langlands parameter $\phi$ for ${}^LG$ is said to be discrete if there exists no proper $W_F$-stable Levi subgroup $L^{\vee}$ such that ${}^LL$ contains $\phi(L_F)$. It is said to be bounded if the projection of $\phi(W_F)$ to $G^{\vee}$ is bounded. 
       \end{definition}
      
       \begin{remark}
        \label{dpchar}
        The discreteness of a parameter $\phi$ is equivalent to finiteness of $\Cent_{G^{\vee}}(\Im{\phi})/Z(G^{\vee})^{W_F}$, where $\Cent_{G^{\vee}}(\Im{\phi})$ is the centralizer of $\Im{\phi}$ in $G^{\vee}$. See the remark before \cite[Lemma 3.1]{GrossReeder} and the proof of \cite[Proposition 3.6]{BorelLfunctions}.
       \end{remark}

       Let $E/F$ be a local or global quadratic extension and $C_E$ be $E^{\times}$ in the local case and $\mathbb{A}^{\times}_E / E^{\times}$ in the global case.
       Let $\mathcal{Z}_E$ be the set of unitary characters of $C_E$ which are conjugate self-dual with respect to the Galois action. 
      We define $\mathcal{Z}_E^{+1}$ (resp. $\mathcal{Z}_E^{-1}$) as the set of elements of $\mathcal{Z}_E$ whose restriction to $C_F$ is trivial (resp. the quadratic character corresponding to $E/F$). It is easily seen that these sets $\mathcal{Z}_E^{+1}$ and $\mathcal{Z}_E^{-1}$ are non-empty as follows$\colon$

      \begin{lemma}
        Let $E/F$ be a quadratic extension of number fields.
        Then, the sets $\mathcal{Z}_E^{+1}$ and $\mathcal{Z}_E^{-1}$ are non-empty. 
      \end{lemma}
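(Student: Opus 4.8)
The plan is to treat the two cases by separate short arguments. For $\mathcal{Z}_E^{+1}$ one simply takes the trivial character $\mathbf{1}$ of $C_E$: it is visibly conjugate self-dual, and its restriction to $C_F$ is trivial, so $\mathbf{1}\in\mathcal{Z}_E^{+1}$.

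For $\mathcal{Z}_E^{-1}$ the first step is to reformulate conjugate self-duality of a character through the norm. Let $\sigma$ generate $\Gal(E/F)$. Since $x\,\sigma(x)=N_{E/F}(x)$ for $x\in\Abb_E^{\times}$ (viewing $\Abb_F^{\times}\subseteq\Abb_E^{\times}$), for a unitary character $\chi$ of $C_E$ the character $\chi^{\sigma}\chi$ is $x\mapsto\chi\bigl(N_{E/F}(x)\bigr)$; hence $\chi$ is conjugate self-dual if and only if it is trivial on the image $\mathcal{N}\subseteq C_E$ of $N_{E/F}(\Abb_E^{\times})$. Thus $\mathcal{Z}_E$ consists precisely of the unitary characters of $C_E$ trivial on $\mathcal{N}$. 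The next step is to identify $\mathcal{N}$: because $N_{E/F}(\Abb_E^{\times})\subseteq\Abb_F^{\times}$ and $\Abb_F^{\times}\cap E^{\times}=F^{\times}$, the subgroup $\mathcal{N}$ lies in the image $C_F$ of $\Abb_F^{\times}$ in $C_E$, and under $C_F\cong\Abb_F^{\times}/F^{\times}$ it corresponds to $N_{E/F}(C_E)$. By the reciprocity law of global class field theory, $C_F/\mathcal{N}\cong\Gal(E/F)\cong\Zbb/2\Zbb$, with the isomorphism given by the quadratic character $\eta_{E/F}$ of $C_F$ attached to $E/F$; in particular $\mathcal{N}=\ker(\eta_{E/F})$ is open, hence closed, in $C_F$, and since $C_F$ is closed in $C_E$, the subgroup $\mathcal{N}$ is closed in $C_E$ and $C_F/\mathcal{N}$ is a finite, hence closed, subgroup of the locally compact abelian group $C_E/\mathcal{N}$.

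The last step is a Pontryagin duality argument: the closed embedding $C_F/\mathcal{N}\hookrightarrow C_E/\mathcal{N}$ dualizes to a surjection $\widehat{C_E/\mathcal{N}}\twoheadrightarrow\widehat{C_F/\mathcal{N}}$, so the nontrivial character of $C_F/\mathcal{N}\cong\Zbb/2\Zbb$ lifts to a unitary character $\chi$ of $C_E$ which is trivial on $\mathcal{N}$ and restricts to $\eta_{E/F}$ on $C_F$; by the above, $\chi\in\mathcal{Z}_E^{-1}$. I do not expect a genuine obstacle; the one step that needs a little care is the bookkeeping — identifying $\mathcal{N}$ with $\ker(\eta_{E/F})$ via reciprocity and checking the topological hypotheses (closedness of $\mathcal{N}$ in $C_E$, and that $C_F/\mathcal{N}$ is a closed subgroup of $C_E/\mathcal{N}$) that legitimize the use of Pontryagin duality.
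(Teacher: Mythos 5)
Your argument is correct and follows essentially the same route as the paper: the trivial character disposes of $\mathcal{Z}_E^{+1}$, and for $\mathcal{Z}_E^{-1}$ both proofs produce the desired character by extending $\eta_{E/F}$ through the quotient of $C_E$ by the global norm subgroup, via the character-extension theorem for closed subgroups of locally compact abelian groups. The only divergence is in verifying that $E^{\times}\Nm_{E/F}(\mathbb{A}_E^{\times})$ is closed: the paper reduces to the norm-one idele class group and uses compactness of $\Nm_{E/F}(\mathbb{A}_E^{1})E^{\times}/E^{\times}$ inside the compact group $\mathbb{A}_E^{1}/E^{\times}$, whereas you combine openness of $\ker\eta_{E/F}$ in $C_F$ with the standard (but not free --- it again rests on compactness of the norm-one part) fact that $C_F$ is closed in $C_E$.
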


      \begin{proof}
        The statement about $\mathcal{Z}_E^{+1}$ is trivial because the trivial character belongs to $\mathcal{Z}_E^{+1}$.
        Hence, it suffices to show the claim that we can extend the quadratic character of $\mathbb{A}^{\times}_F / F^{\times}$ to $\mathbb{A}^{\times}_E / E^{\times}$.
        If we can prove that $\mathbb{A}^{\times}_E / E^{\times}\Nm_{E/F}({\Abb^{\times}_{E}})$ is locally compact Hausdorff, then the claim follows from \cite[Theorem 4.39]{FollandAHA}.
        There is a canonical homeomorphism
        $$ \mathbb{A}^{1}_E / E^{\times}\Nm_{E/F}({\Abb^{1}_{E}}) \to \mathbb{A}^{\times}_E / E^{\times}\Nm_{E/F}({\Abb^{\times}_{E}})  $$
        induced from a canonical inclusion 
        $$ \mathbb{A}^{1}_E / E^{\times} \to \mathbb{A}^{\times}_E / E^{\times} .$$
        Hence, it suffices to show that $\mathbb{A}^{1}_E / E^{\times}\Nm_{E/F}{\Abb^{1}_{E}}$ is Hausdorff. 
        However, this follows from the fact that $\mathbb{A}^{1}_E / E^{\times}$ is Hausdorff and $\Nm_{E/F}({\Abb^{1}_{E}})E^{\times} / E^{\times} = \Im(\Nm_{E/F}\colon \mathbb{A}^{1}_E / E^{\times} \to \mathbb{A}^{1}_E / E^{\times})$ is compact.
      \end{proof}

      Furthermore, let $\mathrm{U}_{E/F}(n)$ be the quasi-split unitary group of $n$-variables for a quadratic extension $E/F$.
       
       \begin{definition}[{\cite[Section 2.1]{Mok}}]
       Let $\kappa$ belong to $\{ \pm 1 \}$ and $E/F$ be a local quadratic extension.
       Furthermore let $\chi_{\kappa}$ belong to $\Zcal_E^{\kappa}$.
       We define the base change map $\xi_{\chi_{\kappa}} \colon \Phi(\mathrm{U}_{E/F}(n)(F)) \to \Phi({GL_n(E)})$ as 
       $$ \phi \mapsto ((pr_1 \circ \phi_{|L_E}) \otimes \chi_{\kappa}, pr_2 \circ \phi_{|L_E}),$$
       where $pr_1$, $pr_2$ are the projections from ${}^LU_{E/F}(n)$ to $U_{E/F}(n)^{\vee}$ and $W_F$, respectively, and we identify an element of ${\Zcal}_E^{\kappa}$ with a character of the Weil group of $E$ via the local class field theory.  

       \end{definition}
       
       \begin{remark}
       The map is induced from an $L$-embedding $ {}^L{\mathrm{U}(n)} \to {}^{L}{\Res_{E/F}(GL_n)}$. See \cite[(2.1.9)]{Mok}.
       \end{remark}
       
       \begin{lemma}
       \label{bc}
       \begin{enumerate}
       \item The base change map is injective and its image equals the set of conjugate self-dual parameters with parity $(-1)^{n-1} \kappa$.
       \item Let $\phi$ be an element of $\Phi(\mathrm{U}_{E/F}(n))$ and $\phi' = \xi_{\chi_{\kappa}}(\phi)$ be its base change. If $\phi'$ is a bounded and discrete parameter, then $\phi$ is also bounded and discrete.
       \end{enumerate}
       \end{lemma}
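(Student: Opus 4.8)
I would deduce part~(1) from the classical description of the standard base change lift together with the behaviour of conjugate-duality signs under twisting, and prove part~(2) by a direct centralizer computation via Remark~\ref{dpchar}. For (1), I would first record, via local class field theory, the dictionary between the sets $\Zcal_E^{\kappa}$ and the conjugate self-dual characters of $W_E$: an element of $\Zcal_E^{+1}$ (resp. $\Zcal_E^{-1}$) corresponds exactly to a conjugate-orthogonal (resp. conjugate-symplectic) character, that is, one whose conjugate-duality pairing is symmetric (resp. antisymmetric); this is the standard computation in \cite[Section 3]{GGP}. I would then invoke the description of the untwisted base change $b\colon\Phi(\mathrm{U}_{E/F}(n))\to\Phi(\GL_n(E))$, $\phi\mapsto pr_1\circ\phi_{|L_E}$, attached to the $L$-embedding ${}^L\mathrm{U}_{E/F}(n)\hookrightarrow{}^L(\Res_{E/F}\GL_n)$ of \cite[(2.1.9)]{Mok}: by \cite[Sections 2.1 and 2.4]{Mok}, this map is injective and its image is exactly the set of conjugate self-dual parameters of $\GL_n(E)$ of parity $(-1)^{n-1}$. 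Since $\xi_{\chi_{\kappa}}$ is $b$ followed by the twist $(\,\cdot\,)\otimes\chi_{\kappa}$, and twisting by $\chi_{\kappa}$ is a bijection on parameters which multiplies the conjugate-duality parity by the parity $\kappa$ of $\chi_{\kappa}$ (the pairing of a twist being the tensor product of the two pairings, exactly as in Lemma~\ref{multi}), the map $\xi_{\chi_{\kappa}}$ is injective with image the conjugate self-dual parameters of parity $(-1)^{n-1}\kappa$.

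For (2), boundedness is the easy half: since $\chi_{\kappa}$ is unitary, $\phi'$ is bounded if and only if $b(\phi)$ is, and $b(\phi)$ is bounded if and only if $\phi$ is --- the nontrivial implication being obtained by averaging a $\phi(W_E)$-invariant positive-definite Hermitian form over the two cosets of $W_E$ in $W_F$, as recorded in \cite[Section 2.4]{Mok}. For discreteness, recall that a discrete parameter of $\GL_n(E)$ is irreducible as a representation of $L_E$: by Remark~\ref{dpchar}, since the dual group $\GL_n(\Cbb)$ carries the trivial $W_E$-action, finiteness of $\Cent_{\GL_n(\Cbb)}(\Im\phi')/\Cbb^{\times}$ forces $\Cent_{\GL_n(\Cbb)}(\Im\phi')=\Cbb^{\times}$. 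Hence $\phi'_{|L_E}$, and therefore $\phi_{|L_E}=\phi'_{|L_E}\otimes\chi_{\kappa}^{-1}$, is irreducible. As $W_E$ acts trivially on $\mathrm{U}_{E/F}(n)^{\vee}=\GL_n(\Cbb)$, the image of $\phi_{|L_E}$ lies in $\GL_n(\Cbb)$, so Schur's lemma gives $\Cent_{\GL_n(\Cbb)}(\phi(L_E))=\Cbb^{\times}$. Choosing a lift $\sigma\in W_F\setminus W_E$ of the nontrivial element of $\Gal(E/F)$, letting $\theta$ denote the induced pinned automorphism of $\GL_n(\Cbb)$, and writing $\phi(\sigma)=(A,\sigma)$ in ${}^L\mathrm{U}_{E/F}(n)=\GL_n(\Cbb)\rtimes W_F$, a scalar matrix $\lambda I$ commutes with $\phi(\sigma)$ if and only if $\lambda I=A\,\theta(\lambda I)\,A^{-1}=\lambda^{-1}I$, because $\theta$ inverts the center; hence $\Cent_{\mathrm{U}_{E/F}(n)^{\vee}}(\Im\phi)=\{\pm I\}$. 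For the same reason $Z(\mathrm{U}_{E/F}(n)^{\vee})^{W_F}=(\Cbb^{\times})^{W_F}=\{\pm1\}$, so the quotient $\Cent_{\mathrm{U}_{E/F}(n)^{\vee}}(\Im\phi)/Z(\mathrm{U}_{E/F}(n)^{\vee})^{W_F}$ is trivial and $\phi$ is discrete by Remark~\ref{dpchar}.

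The main difficulty is not depth but bookkeeping: one must match the parity $C(G,G',\pi)$ of Section~\ref{parity}, defined through invariant bilinear pairings, with the conjugate-duality sign used in \cite{GGP} and \cite{Mok}, and confirm that the twisting formula for parities is the one claimed. Apart from that compatibility check and the short centralizer computation in part~(2), everything is an assembly of results of Mok and of the Gan--Gross--Prasad sign calculus.
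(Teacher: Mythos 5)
Your proposal is correct and follows essentially the same route as the paper: part (1) is exactly the content of Mok's Lemma 2.2.1 (which the paper simply cites, and which you re-derive from the untwisted base change plus the GGP twisting calculus), and part (2) is the same centralizer computation via Remark \ref{dpchar} and Schur's lemma that the paper invokes, just written out in full.
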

   
       \begin{proof}
       \begin{enumerate}
       \item 
       It is \cite[Lemma 2.2.1]{Mok}.
       \item It easily follows from the criterion of Remark \ref{dpchar} and Schur's lemma.  
       \end{enumerate}
       \end{proof}
       
       The local Langlands classification for tempered representations of the quasi-split unitary group is proved by Mok; see \cite[Theorem 2.5.1]{Mok}. 
       Roughly speaking, for each bounded Langlands parameter $\phi$, there exists a finite set of tempered irreducible representations $\Pi_{\phi}$ called an $L$-packet associated to $\phi$.
       We will use the following consequence of the local classification theorem by Mok.
       
       \begin{theorem}[{\cite[Section 7.7, (7.7.17)]{Mok}}]
        \label{discpar}
       Let $E/F$ be a local quadratic extension. 
       If a Langlands parameter $\phi$ for $\mathrm{U}_{E/F}(n)$ is discrete and bounded, the associated $L$-packet $\Pi_{\phi}$ consists of square-integrable representations.  
       \end{theorem}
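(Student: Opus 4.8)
The plan is to deduce this from the local Langlands classification of \cite[Theorem 2.5.1]{Mok}, together with the standard structure theory of tempered representations (arguing by induction on $n$); it is the quasi-split unitary analogue of the classical fact that an irreducible smooth representation of $\GL_n(E)$ is square-integrable exactly when its Langlands parameter is irreducible. Set $G = \mathrm{U}_{E/F}(n)$. First I would record two consequences of Mok's classification. (a) Since $\phi$ is bounded, every member of $\Pi_\phi$ is tempered. (b) The classification is compatible with parabolic induction: the Levi subgroups of $G$ are the groups $M = \bigl(\prod_i \GL_{n_i}(E)\bigr)\times\mathrm{U}_{E/F}(n_0)$, with dual Levi $M^\vee\subseteq G^\vee$ and $L$-embedding $\iota_M\colon {}^LM\hookrightarrow {}^LG$; and if a bounded parameter $\psi$ of $G$ is $G^\vee$-conjugate to $\iota_M\circ\psi_M$ for a bounded $\psi_M\in\Phi(M)$ with $M$ a proper Levi subgroup, then $\Pi_\psi$ is exactly the set of irreducible constituents of the induced representations $\mathrm{Ind}_P^G\sigma$, $\sigma\in\Pi_{\psi_M}$, taken over a parabolic $P$ with Levi $M$; in particular every member of such a $\Pi_\psi$ is a constituent of a representation induced from a proper parabolic, hence not square-integrable.

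Then the argument runs as follows. Let $\phi$ be discrete and bounded and fix $\pi\in\Pi_\phi$. By (a), $\pi$ is tempered, so by Harish-Chandra's structure theory $\pi$ is a direct summand of $\mathrm{Ind}_P^G\sigma$ for some parabolic $P = MN$ and some square-integrable representation $\sigma$ of $M(F)$. Applying \cite[Theorem 2.5.1]{Mok} to $M$ (and the Bernstein--Zelevinsky classification to the $\GL$-factors of $M$, and the inductive hypothesis to the unitary factor $\mathrm{U}_{E/F}(n_0)$), we get $\sigma\in\Pi_{\phi_M}$ for some bounded $\phi_M\in\Phi(M)$; moreover $\phi_M$ is discrete for $M$, since a square-integrable representation is not a constituent of anything induced from a proper parabolic of $M$. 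By the compatibility in (b), $\pi\in\Pi_{\iota_M\circ\phi_M}$; since $L$-packets attached to inequivalent bounded parameters are disjoint, $\phi$ is $G^\vee$-conjugate to $\iota_M\circ\phi_M$, so $\phi$ factors, up to conjugacy, through ${}^LM$. If $M$ were proper this would contradict the discreteness of $\phi$ --- equivalently, by the criterion recalled in Remark \ref{dpchar}, the finiteness of $\Cent_{G^\vee}(\Im\phi)/Z(G^\vee)^{W_F}$. Hence $M = G$, $P = G$, and $\pi = \sigma$ is square-integrable; as $\pi\in\Pi_\phi$ was arbitrary, $\Pi_\phi$ consists of square-integrable representations.

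The main obstacle is the parabolic-induction compatibility invoked in (b): one needs that, for a parameter induced from a proper Levi $M$, the associated tempered packet consists exactly of the irreducible constituents of the corresponding induced representations and that each such constituent (there may be several, governed by the $R$-group) carries the induced parameter --- equivalently, that the Langlands correspondence commutes with Jacquet modules on the tempered level --- and that the square-integrable $\sigma$ produced by the structure theorem genuinely lies in a Mok packet $\Pi_{\phi_M}$ with $\phi_M$ discrete. These are internal compatibilities of \cite[Theorem 2.5.1]{Mok}, established there by global (stable trace formula) methods, and they carry the real content; the remaining ingredients --- boundedness $\Rightarrow$ temperedness, the structure theorem for tempered representations, disjointness of distinct $L$-packets, the explicit list of Levi subgroups of $\mathrm{U}_{E/F}(n)$, and the translation between the ``no proper Levi'' and centralizer formulations of discreteness via Remark \ref{dpchar} --- are standard.
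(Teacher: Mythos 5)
Your argument is correct, and it fills in honestly what the paper leaves as a one-line citation: the paper's proof of Theorem \ref{discpar} simply reads the statement off from Mok's construction of the packets (``the construction of $L$-packets in Section 7.7 and especially the remarks after Proposition 7.7.4''), where the discrete packets are manufactured from square-integrable representations in the first place. You instead treat the classification \cite[Theorem 2.5.1]{Mok} as a black box and run the standard deduction: boundedness gives temperedness, Harish-Chandra's structure theorem exhibits $\pi$ as a summand of a unitary induction from a discrete series $\sigma$ of a Levi $M$, compatibility of packets with parabolic induction plus disjointness of packets forces $\phi$ to factor through ${}^LM$, and discreteness then forces $M=G$. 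The two routes rest on the same underlying input (Mok's Chapter 7 construction, where the induction-compatibility you flag as the crux is established), so the difference is one of packaging: the paper's citation is shorter but opaque, while your version makes explicit exactly which formal properties of the classification are being used and would transfer verbatim to any classification with those properties. One small caution: your parenthetical ``a square-integrable representation is not a constituent of anything induced from a proper parabolic of $M$'' is false as literally stated (the Steinberg representation is such a constituent); what you need, and what your argument actually uses, is that a direct summand of a \emph{unitary} induction of a \emph{discrete series} from a \emph{proper} Levi fails Casselman's square-integrability criterion. With that phrasing corrected, the induction on $n$ closes and the proof is complete.
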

       
       \begin{proof}
       It follows from the construction of $L$-packets in \cite[Section 7.7]{Mok} and especially remarks after \cite[Proposition 7.7.4]{Mok}.
       \end{proof}
       
       \begin{theorem}
        \label{globalizeunit}
       Let $L/K$ be a quadratic extension of number fields such that $K$ is totally real and $L$ is totally imaginary.
       Let also $v_1, \ldots,v_m$ be a set of finite places which does not split in $L/K$ and we fix a place $v$ of $K$ which splits in $L/K$.
       Furthermore, let $\pi_1, \ldots, \pi_m$ be discrete series representations of the groups $\mathrm{U}_{L_{v_i}/K_{v_i}}(n)(K_{v_i})$. 
       Then there exists a cuspidal automorphic representation $\Pi$ of $\mathrm{U}_{L/K}(n)(\Abb_K)$ whose local component at each $v_i$ is isomorphic to $\pi_i$ and supercuspidal at $v$.
       \end{theorem}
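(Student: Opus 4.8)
The plan is to produce $\Pi$ by a simple form of the Arthur--Selberg trace formula for the quasi-split group $G := \mathrm{U}_{L/K}(n)$, using pseudocoefficients at $v_1,\dots,v_m$ to pin down the prescribed local components and a supercuspidal matrix coefficient at the split place $v$ both to force cuspidality and to detect the component there. When $n=1$ the group is an anisotropic torus, there is no residual spectrum and ``supercuspidal'' is vacuous, so the statement reduces to extending a character of $\prod_i\mathrm{U}(1)(K_{v_i})$ to an automorphic character, which is elementary (cf.\ the proof of the lemma above); hence assume $n\geq 2$. I would choose a test function $f=\prod_w f_w$ on $G(\Abb_K)$ as follows. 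At each $v_i$ take $f_{v_i}$ to be a function with $\operatorname{tr}\pi_i(f_{v_i})=1$ and $\operatorname{tr}\rho(f_{v_i})=0$ for every irreducible unitary $\rho\not\cong\pi_i$; such a function exists because the square-integrable representation $\pi_i$ is an isolated point of the unitary dual of $G(K_{v_i})$ (trace Paley--Wiener theorem). Since $v$ splits in $L/K$ we have $G(K_v)\cong\GL_n(K_v)$; fix a supercuspidal representation $\tau$ of $\GL_n(K_v)$ and let $f_v$ be a matrix coefficient of $\tau$, so that $f_v$ is cuspidal and, by Schur orthogonality for supercuspidals, $\operatorname{tr}\rho(f_v)=0$ unless $\rho$ is supercuspidal isomorphic to $\tau$ (up to contragredient). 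At each archimedean place $w$ of $K$ we have $L_w/K_w=\Cbb/\Rbb$, the real unitary group $\mathrm{U}_{L_w/K_w}(n)$ has a compact Cartan subgroup and hence discrete series, and we take $f_w$ to be a pseudocoefficient (Euler--Poincar\'e function) of one of them --- this is cuspidal, and it uses that $L$ is totally imaginary. At the remaining finite places let $f_w$ be the characteristic function of a maximal compact subgroup, hyperspecial for almost all $w$.

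Because $f$ is cuspidal at $v$ and at the archimedean places, the simple trace formula applies: the continuous contributions vanish and it reads
$$ \sum_{\gamma} \operatorname{vol}(G_\gamma(K)\backslash G_\gamma(\Abb_K))\, O_\gamma(f) \;=\; \sum_{\pi} m_{\mathrm{disc}}(\pi)\, \operatorname{tr}\pi(f), $$
the left sum over elliptic conjugacy classes in $G(K)$ and the right over the discrete automorphic spectrum. To see that the geometric side is nonzero I would use the freedom to choose an auxiliary split place $v''\notin\{v,v_1,\dots,v_m\}$ of $K$ (there are infinitely many) and take $f_{v''}$ to be the characteristic function of a very small congruence neighbourhood of $1$ in $G(K_{v''})\cong\GL_n(K_{v''})$. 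Since $G(K)$ is discrete in $G(\Abb_K)$ and the support of $\prod_{w\neq v''}f_w$ is compact modulo the centre, only finitely many classes $\gamma$ contribute, and shrinking the neighbourhood at $v''$ forces every contributing $\gamma$ to be trivial; thus the geometric side collapses to the single term $\operatorname{vol}(G(K)\backslash G(\Abb_K)^1)\,f(1)$, which is nonzero once the local functions are normalized so that $f_w(1)\neq 0$ for all $w$.

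Since the geometric side is nonzero, so is the spectral side, and there is a $\pi$ in the discrete automorphic spectrum of $G$ with $\operatorname{tr}\pi(f)=\prod_w\operatorname{tr}\pi_w(f_w)\neq 0$, hence $\operatorname{tr}\pi_w(f_w)\neq 0$ for every $w$. At $v$ this forces $\pi_v\cong\tau$, so $\pi_v$ is supercuspidal; since for $n\geq 2$ every proper $K$-parabolic of $G$ localizes to a proper parabolic, a residual automorphic representation can have no supercuspidal local constituent, and therefore $\pi$ is cuspidal. At each $v_i$, $\operatorname{tr}\pi_{v_i}(f_{v_i})\neq 0$ forces $\pi_{v_i}\cong\pi_i$ by the choice of $f_{v_i}$. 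Setting $\Pi:=\pi$ proves the theorem.

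The substantive point --- and the reason one is free to use an extra split place --- is the nonvanishing of the elliptic orbital-integral sum; everything else is the now-standard combination of pseudocoefficient technology with the simple trace formula (compare \cite{DKVcentalsimple,Rog,Mok}). The subtlety I would most have to be careful about is the bookkeeping of central characters of the $\pi_i$: one fixes a central-character datum for $G$ compatible with them --- using, if necessary, a global twist or the freedom at the auxiliary split place --- and runs the trace formula relative to that datum, after which the argument goes through unchanged.
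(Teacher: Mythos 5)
Your overall strategy (pseudocoefficients at the $v_i$, a supercuspidal matrix coefficient at the split place $v$, Euler--Poincar\'e functions at the archimedean places, and a shrinking congruence subgroup at an auxiliary split place to reduce the geometric side to the identity contribution) is exactly the strategy behind the results the paper actually invokes here: the proof in the paper is a two-line citation of Shin's automorphic Plancherel density theorem (Corollary 4.5 of that paper), with the remark that one could instead modify the proof of Clozel's limit multiplicity theorem. So you are reproving the cited input rather than using it, which is fine in principle --- but your write-up contains a genuine gap at the one point where those references have to work.

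The gap is the asserted existence of $f_{v_i}$ with $\operatorname{tr}\pi_i(f_{v_i})=1$ and $\operatorname{tr}\rho(f_{v_i})=0$ for \emph{every} irreducible unitary $\rho\not\cong\pi_i$. No such function exists for a general discrete series $\pi_i$. A pseudocoefficient only annihilates the irreducible \emph{tempered} representations other than $\pi_i$; non-tempered unitary representations can have nonzero trace. Concretely, if $\pi_i$ is the Steinberg representation, then $[\mathbf{1}]=[i_B^G(\delta_B^{-1/2})]-[\mathrm{St}]$ in the Grothendieck group, and by the trace Paley--Wiener theorem $\chi\mapsto\operatorname{tr}\, i_B^G(\chi)(f)$ is a Laurent polynomial in the unramified twist, so if it vanishes for all unitary $\chi$ it vanishes identically; hence any $f$ with $\operatorname{tr}\mathrm{St}(f)=1$ that kills the unitary principal series satisfies $\operatorname{tr}\mathbf{1}(f)=-1\neq 0$. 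Consequently your final implication ``$\operatorname{tr}\pi_{v_i}(f_{v_i})\neq 0$ forces $\pi_{v_i}\cong\pi_i$'' fails: a non-tempered local component whose standard-module expansion involves $\pi_i$ could be what the trace formula detects. (Note that cuspidality of $\Pi$, which you do get from the supercuspidal coefficient at $v$, does not give temperedness at $v_i$ without Ramanujan-type input.) Disposing of this non-tempered contamination is precisely the content of Clozel's limit-multiplicity argument and of Shin's Plancherel density theorem (via Sauvageot's density theorem): one lets the level shrink at the auxiliary split place and shows that in the limit the counting measure converges to the Plancherel measure, which is supported on the tempered dual and has an atom at $\pi_i$. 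You do introduce the auxiliary place $v''$ with shrinking level, but you use it only to isolate the identity on the geometric side, not to run this asymptotic comparison; as written, the argument does not close. Either carry out the limit argument or, as the paper does, quote the cited corollary directly.
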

       
       \begin{proof}
       It follows directly from \cite[Corollary 4.5]{ShinPlancherel} by taking $S$ as the set ${v_1, \ldots, v_m}$ and $U$ as the point corresponding to discrete series representations and taking the restriction of a matrix coefficient of a supercuspidal representation to ${}^0G$ (see \cite[V.2.3]{Renardp-adique}) as the test function at $v$.
       We can replace it with a modification of the proof of \cite[Theorem 1B]{Clozellimit}.
       \end{proof}
       
       We also use a corollary of the global classification theorem by Mok \cite{Mok}.
       We recall some notations to state it.

       \begin{definition}[{\cite[Section 2.3]{Mok}}]
         Let $E/F$ be a quadratic extension of number fields.
         \begin{enumerate}
           \item We define a simple global parameter of $\GL_{N}(\Abb_{E})$ to be a formal tensor product
                 $$ \mu \boxtimes \nu $$
                 where $\mu$ is a unitary cuspidal automorphic representation of $GL_m(\Abb_E)$ and $\nu$ be an $n$-dimensional algebraic representation of $\SL_2(\Cbb)$, such that $mn=N$.
           \item A finite formal direct sum 
                 $$ l_1(\mu_{1}\boxtimes \nu_{1}) \boxplus \cdots \boxplus l_r(\mu_{r} \boxtimes \nu_{r}) $$
                 is called a global parameter of $GL_N(\Abb_E)$ if each $l_i$ is a positive integer and each formal summand is a simple global parameter of $GL_{m_i}(\Abb_E)$, such that $\sum_{i=1}^r l_{i}m_{i}=N$.

           \item A global parameter is said to be elliptic if all the numbers $l_i$ above are equal to 1 and all the simple global parameters $\mu_i$ are conjugate self-dual.
         \end{enumerate}
       \end{definition}

       \begin{remark}[{\cite[Remarks after (2.3.3)]{Mok}}]
         There exists a bijection between the set of global parameters of $GL_N(\Abb_E)$ and the full automorphic spectrum of $GL_N(\Abb_E)$. A simple parameter corresponds to a discrete automorphic representation.
       \end{remark}
        
       We now state a corollary of the global classification theorem by Mok \cite{Mok}.
      
       \begin{theorem}[{\cite[Theorem 2.5.2]{Mok}}]
         \label{globalclass}
         Let $E/F$ be a quadratic extension of number fields and $\chi_{\kappa}$ be a character in $\mathcal{Z}^{\kappa}_E$.
         Then, there is a map 
         $$ \Pi \mapsto \mathrm{BC}(\Pi) $$
         from the set of representations in the discrete automorphic spectrum of $\mathrm{U}_{E/F}(N)$ to the set of elliptic global parameters of $GL_N(\Abb_E)$.
         If $\Pi$ is supercuspidal at one place $v$ of $F$ which splits in $E/F$, it induces the local base change map associated to $\chi_{\kappa}$ at all the places and also $BC(\Pi)$ is conjugate self-dual and cuspidal.
       \end{theorem}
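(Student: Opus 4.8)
The bulk of the statement is the global classification theorem of Mok, so the plan is to recall \cite[Theorem 2.5.2]{Mok} in the form we need and then extract the two extra assertions about cuspidality and conjugate self-duality. First I would recall that \cite{Mok} attaches to each $\Pi$ in the discrete automorphic spectrum of $\mathrm{U}_{E/F}(N)$ an elliptic global parameter
$$ \psi = \mu_1 \boxtimes \nu_1 \boxplus \cdots \boxplus \mu_r \boxtimes \nu_r $$
of $\GL_N(\Abb_E)$, each $\mu_i$ being a conjugate self-dual unitary cuspidal automorphic representation. Writing $\mathrm{BC}(\Pi)$ also for the isobaric automorphic representation of $\GL_N(\Abb_E)$ whose isobaric summands are the discrete automorphic representations attached to the simple summands $\mu_i \boxtimes \nu_i$, the compatibility with the local base change map $\xi_{\chi_{\kappa}}$ at every place is built into the construction of the global packet of $\psi$ out of the local packets of \cite[Theorem 2.5.1]{Mok}: at a place $v$ of $F$ the localization $\psi_v$ determines a local $L$-packet containing $\Pi_v$, and that local packet is by definition the one whose base change is $\xi_{\chi_{\kappa,v}}(\psi_v)$, which is precisely $\bigotimes_{w \mid v} \mathrm{BC}(\Pi)_w$.

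Next I would use the hypothesis that $\Pi$ is supercuspidal at a place $v$ of $F$ that splits in $E/F$. At such a $v$ one has $E \otimes_F F_v \cong F_v \times F_v$ and $\mathrm{U}_{E/F}(N)(F_v) \cong \GL_N(F_v)$, and the split local base change carries $\Pi_v$ to the pair whose two entries are, up to a character twist, $\Pi_v$ and its contragredient; hence the components of $\mathrm{BC}(\Pi)$ at the two places of $E$ above $v$ are supercuspidal. On the other hand, unless $r = 1$ and $\dim \nu_1 = 1$, the local component of the isobaric representation attached to $\psi$ at every place is a constituent of a proper normalized parabolic induction, hence not supercuspidal. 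Therefore $r = 1$ and $\nu_1$ is trivial, so $\psi = \mu_1$ and $\mathrm{BC}(\Pi) = \mu_1$ is cuspidal; being a simple summand of an elliptic parameter, $\mu_1$ is conjugate self-dual. Together with the local compatibility of the first paragraph this gives all the assertions.

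The only non-formal points are Mok's normalizations — the precise twist by $\chi_{\kappa}$ and the matching of the sign $\kappa$ with the type of the quasi-split unitary group, which I would import verbatim from \cite[Sections 2.1 and 2.4]{Mok} — together with the elementary fact that a Speh representation with nontrivial $\SL_2$-factor, or a proper isobaric sum, has no supercuspidal local constituent, which follows from the Langlands classification and the irreducibility of isobaric sums. The step I expect to cost the most care is the upgrade from base change at almost all places to base change at all places; if one prefers not to invoke Mok's strong form directly, one can note that $\mathrm{BC}(\Pi)$ is cuspidal and hence, by strong multiplicity one for $\GL_N(\Abb_E)$, determined by its components at almost all places, and then match the remaining local components using the local classification \cite[Theorem 2.5.1]{Mok}.
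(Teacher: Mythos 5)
Your proposal is correct and follows essentially the same route as the paper: both deduce from supercuspidality of $\Pi_v$ at a split place, together with the explicit description of the localization of the global parameter there, that the elliptic parameter $\psi$ must be simple with trivial $\SL_2$-factor (generic), whence $\mathrm{BC}(\Pi)$ is cuspidal and the local $A$-packets are genuine $L$-packets, giving local base change everywhere. You spell out the exclusion of $r>1$ and of nontrivial $\nu_1$ slightly more explicitly than the paper, which phrases the same point as the statement that $\phi_{\psi_v}$ corresponds to a unitary supercuspidal representation of $\GL_n(E_v)$.
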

       
       \begin{remark}
         See \cite[Definition 2.4.5, 2.4.7]{Mok} for the definition of discrete global parameter for the quasi-split unitary group, and also see \cite[Theorem 2.5.2]{Mok} for more precise statement.
       \end{remark}

       \begin{proof}
         The first assertion is in the statement of \cite[Theorem 2.5.2]{Mok}. 
         We prove the second assertion. 
         We use the notation of \cite[Chapter 2]{Mok}. 
         It suffices to show that the global parameter $\psi$ associated to $\Pi$ is generic and hence the localization of global parameter at each place associated with $\Pi$ is a Langlands parameter (not just an $A$-parameter).
         It follows from our hypothesis about supercuspidality at $v$ and the definition of localization of global parameters at a split place in \cite[p.~24]{Mok} that the $A$-parameter $\psi_v$ attached to $\Pi$ at $v$ is generic, because the associated Langlands parameter $\phi_{\psi_v}$ corresponds to a unitary supercuspidal representation of $GL_n(E_v)$.
         Hence, the global parameter $\psi$ associated to $\Pi$ is also generic and $BC(\Pi)$ is cuspidal.
       \end{proof}

       \begin{theorem}
        \label{globalizerep}
        Let $L/K$ be a global quadratic extension and $v_1,\dots,v_m$ be a finite set of finite places which do not split in this extension.
        Furthermore, let $\pi_1, \ldots,\pi_m$ be discrete conjugate self-dual representations of $\GL_{n}(L_{v_i})$ such that their Langlands parameters have the same parity.
        
        Then, there exists a conjugate self-dual cuspidal automorphic representation $\Pi$ of $\GL_n(\Abb_{L})$ whose local component at each $v_i$ is isomorphic to $\pi_i$.
       \end{theorem}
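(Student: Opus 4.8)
The plan is to follow the globalizing scheme of Prasad and Ramakrishnan \cite{Prasad}, with Mok's endoscopic classification \cite{Mok} playing the role that automorphic descent plays there: descend each $\pi_i$ to a square-integrable representation of a quasi-split unitary group, globalize on that unitary group, and then base change back up to $\GL_n(\Abb_L)$.

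First I would let $(-1)^{n-1}\kappa$, with $\kappa\in\{\pm1\}$, denote the common parity of the Langlands parameters $\sigma_i$ of the $\pi_i$, and fix once and for all a global Hecke character $\chi_\kappa\in\mathcal{Z}_L^{\kappa}$; as shown above, $\mathcal{Z}_L^{\kappa}$ is non-empty. For each $i$ the parameter $\sigma_i$ is conjugate self-dual of parity $(-1)^{n-1}\kappa$, so Lemma~\ref{bc}~(i), applied at $v_i$ with the local character $(\chi_\kappa)_{v_i}\in\mathcal{Z}_{L_{v_i}}^{\kappa}$, yields a Langlands parameter $\phi_i$ for $\mathrm{U}_{L_{v_i}/K_{v_i}}(n)$ with $\xi_{(\chi_\kappa)_{v_i}}(\phi_i)=\sigma_i$. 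Since $\pi_i$ is a discrete series representation, $\sigma_i$ is discrete and bounded, hence so is $\phi_i$ by Lemma~\ref{bc}~(ii), and then Theorem~\ref{discpar} guarantees that the $L$-packet $\Pi_{\phi_i}$ consists of square-integrable representations. I would pick any $\widetilde{\pi}_i\in\Pi_{\phi_i}$, recording that $\xi_{(\chi_\kappa)_{v_i}}(\phi_i)=\sigma_i$.

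Next I would choose a finite place $v_0$ of $K$ that splits in $L/K$ (infinitely many such exist), so that $v_0$ is automatically distinct from $v_1,\dots,v_m$. At this point I would invoke Theorem~\ref{globalizeunit} --- here it is used that $K$ is totally real and $L$ totally imaginary, which is exactly the situation produced by Lemma~\ref{globalizefld} in our intended application and which I would therefore assume --- to obtain a cuspidal automorphic representation $\widetilde{\Pi}$ of $\mathrm{U}_{L/K}(n)(\Abb_K)$ with $\widetilde{\Pi}_{v_i}\cong\widetilde{\pi}_i$ for each $i$ and with $\widetilde{\Pi}_{v_0}$ supercuspidal. Finally I would feed $\widetilde{\Pi}$ and the fixed character $\chi_\kappa$ into Theorem~\ref{globalclass}: since $\widetilde{\Pi}$ is supercuspidal at the split place $v_0$, the base change $\Pi:=\mathrm{BC}(\widetilde{\Pi})$ is a conjugate self-dual cuspidal automorphic representation of $\GL_n(\Abb_L)$ which induces the local base change map attached to $\chi_\kappa$ at every place. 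At $v_i$ this means that $\Pi_{v_i}$ is the local base change of $\widetilde{\Pi}_{v_i}=\widetilde{\pi}_i\in\Pi_{\phi_i}$, namely the representation of $\GL_n(L_{v_i})$ with Langlands parameter $\xi_{(\chi_\kappa)_{v_i}}(\phi_i)=\sigma_i$, that is, $\pi_i$. Hence $\Pi_{v_i}\cong\pi_i$ for all $i$, and $\Pi$ is the desired representation.

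The step I expect to be the main obstacle is ensuring that $\mathrm{BC}(\widetilde{\Pi})$ is genuinely cuspidal rather than merely residual in the discrete spectrum: this is the reason for inserting the auxiliary split place $v_0$ with a supercuspidal component, which forces the global parameter of $\widetilde{\Pi}$ to be generic and simple. That mechanism is already packaged inside Theorems~\ref{globalizeunit} and \ref{globalclass}, so the real work is checking that their hypotheses are met --- in particular the existence and role of the split place $v_0$ and the totally real/imaginary condition. A secondary but necessary bookkeeping point is to fix the global character $\chi_\kappa$ \emph{before} carrying out the local descents, so that the local base change maps used at the various $v_i$ are simultaneously the localizations of one global base change; otherwise the local twists need not be compatible and the argument at the end would break.
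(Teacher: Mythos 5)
Your proposal is correct and follows essentially the same route as the paper's own proof: descend each $\pi_i$ to a discrete bounded parameter of the quasi-split unitary group via Lemma~\ref{bc}, pick square-integrable members of the $L$-packets using Theorem~\ref{discpar}, globalize on the unitary group with a supercuspidal component at an auxiliary split place via Theorem~\ref{globalizeunit}, and base change back with Theorem~\ref{globalclass}. Your extra care in fixing the global character $\chi_\kappa$ before the local descents and in flagging the totally real/totally imaginary hypothesis is a useful clarification of points the paper leaves implicit, but it is not a different argument.
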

       
       \begin{proof}
        First, we fix an element $\chi$ of $\mathcal{Z}^{\kappa}_{E}$ such that $(-1)^{n-1}\kappa$ is equal to the parity of the Langlands parameter of $\pi_1$.
        We use the local components of this character $\chi$ to define the local base change map.
        By using Lemma \ref{bc} (i), we can attach the given representations $\pi_i$ to discrete bounded Langlands parameters $\phi_i$ of the quasi-split unitary group. 
        Here, we use the hypothesis that the parameters have the same parity.
       
        We then choose representations $\pi_i'$ from the $L$-packets associated with $\phi_i$. 
        By Theorem \ref{discpar}, they are discrete series representations.
        We can now globalize the representations $\pi_i'$ to cuspidal automorphic representation $\Pi'$ of $\mathrm{U}_{L/K}$ by the method of Theorem \ref{globalizeunit}.
       
        By applying Theorem \ref{globalclass}, we obtain the cuspidal conjugate self-dual automorphic representation $\Pi$ = $\mathrm{BC}(\Pi')$ of $\GL_n(\Abb_L)$ whose each localization at $v_i$ is isomorphic to $\pi_i$.
        Hence the theorem.
       \end{proof}
       
   \subsection{Special cases of the global Jacquet-Langlands correspondence}
   
       We also use the same form of the Badulescu-Jacquet-Langlands correspondence \cite{Bad08} which have been used in \cite[Theorem 1.2]{Prasad}.

       \begin{definition}[{\cite[Definition 1.5]{Aub}}]
         Let $G$ be a connected reductive group over a $p$-adic field $F$. 
         The Aubert-Zelevinsky involution of a smooth representation $\pi$ of $G$ of finite length is defined to be 
         $$ D_{G}(\pi) = \sum_{P \subseteq G \colon \mathrm{standard \  parabolic}} (-1)^{\dim(A_P)}i_P^G(r_P^G(\pi)),$$
         where $i_P^G(\pi)$, $r_P^G(\pi)$ are the normalized induction and the normalized Jacquet module of $\pi$ with respect to $P$, and $A_P$ is the split component of the center of a Levi component of $P$. 
         It is an element of the Grothendieck group of smooth representations of $G$ of finite length (for example, see \cite[VI.6.2, VI.6.4]{Renardp-adique}).
       \end{definition}

       \begin{theorem}[{\cite[Corollaire 3.9]{Aub}}]
         If $\pi$ is irreducible, then there exists a sign $\epsilon$ such that $\epsilon D_{G}(\pi)$ is an irreducible representation.
         We denote this representation by $|D_G(\pi)|$.  
       \end{theorem}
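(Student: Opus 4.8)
The plan is to reduce to the supercuspidal case, where the statement is immediate, and then to upgrade the Grothendieck-group identities satisfied by $D_G$ to a statement in the derived category that prevents an irreducible representation from being split into several constituents. Throughout, write $\Rcal(G)$ for the Grothendieck group of finite-length smooth representations of $G$.

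The base case is clear: if $\pi$ is supercuspidal then $r_P^G(\pi)=0$ for every proper standard parabolic $P$, so only the $P=G$ summand survives in $D_G(\pi)=\sum_P(-1)^{\dim A_P}\,i_P^G\bigl(r_P^G(\pi)\bigr)$, giving $D_G(\pi)=\pi$. For the general case I would first establish, using the Bernstein--Zelevinsky geometric lemma, two structural identities on Grothendieck groups. First, \emph{involutivity}: $D_G\circ D_G=\mathrm{id}$ on $\Rcal(G)$; one expands $D_G^{2}$, rewrites each composite $r_Q^G\circ i_P^G$ as the geometric-lemma sum over $P$--$Q$ double cosets, and observes that the alternating contributions cancel by an inclusion--exclusion identity over subsets of the simple roots. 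Second, \emph{compatibility with parabolic induction}: for a standard parabolic $P=MN$,
$$D_G\circ i_P^G=(-1)^{\dim A_P}\;i_P^G\circ D_M\qquad\text{as maps }\Rcal(M)\to\Rcal(G),$$
proved by the same kind of cancellation.

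For a general irreducible $\pi$ I would then lift $D_G$ to a duality functor $R\mathbb{D}_G$ on the bounded derived category $D^{b}(\mathrm{Rep}(G))$ of finite-length representations, built from the Schneider--Stuhler finite projective resolutions (in suitable form, $R\mathbb{D}_G(\pi)\simeq R\Hom_{\Hcal(G)}(\pi,\Hcal(G))$ up to a normalizing twist and a shift), whose Euler characteristic on $\Rcal(G)$ recovers $\pm D_G$. The decisive input is a purity statement: for $\pi$ irreducible, $R\mathbb{D}_G(\pi)$ has cohomology concentrated in a single degree, and that cohomology object is irreducible. Granting it, the class of $R\mathbb{D}_G(\pi)$ in $\Rcal(G)$ is $\pm$ an irreducible; by the Euler-characteristic comparison it equals $\pm D_G(\pi)$, which is the theorem. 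The supercuspidal computation fixes the sign $\epsilon$ in that case, and the Euler-characteristic comparison identifies the irreducible (with multiplicity one) for every $\pi$.

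I expect the purity statement to be the main obstacle. The two identities above are essentially bookkeeping with the geometric lemma, whereas concentration of $R\mathbb{D}_G(\pi)$ in one degree is where the geometry of the Bruhat--Tits building genuinely enters. I would establish it either via the fact that $R\mathbb{D}_G$ is $t$-exact for a suitable perverse $t$-structure on $D^{b}(\mathrm{Rep}(G))$ whose simple objects are precisely the shifts of the irreducible representations --- a $t$-exact duality functor necessarily carries such an object to such an object --- or, following Aubert's original and more hands-on route, by an induction on the semisimple rank of $G$ that uses the compatibility with $i_P^G$, Bernstein's second adjointness to control the socle and cosocle of the relevant induced module, and the identity $D_G\circ D_G=\mathrm{id}$ to exclude spurious constituents and force multiplicity one.
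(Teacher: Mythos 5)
This statement is not proved in the paper at all: it is quoted verbatim as Aubert's Corollaire 3.9 and used as a black box, so the only ``proof'' on offer there is the citation. Your outline does correctly identify the two strategies known in the literature --- Aubert's original Grothendieck-group argument and the Schneider--Stuhler/Bernstein realization of $D_G$ as an Euler characteristic of the duality $R\Hom_{\mathcal{H}(G)}(\pi,\mathcal{H}(G))$ --- and the preparatory steps you list (the supercuspidal base case, involutivity of $D_G$, and the commutation $D_G\circ i_P^G=\pm\, i_P^G\circ D_M$ via the geometric lemma) are indeed standard and provable exactly as you describe. (Minor slip: with the paper's normalization the $P=G$ term carries the sign $(-1)^{\dim A_G}$, so for supercuspidal $\pi$ one gets $D_G(\pi)=(-1)^{\dim A_G}\pi$ rather than $\pi$; harmless here.)

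The genuine gap is that the entire content of the theorem is concentrated in the step you explicitly defer: the purity statement that $R\mathbb{D}_G(\pi)$ has cohomology in a single degree. Everything before it is bookkeeping, and everything after it (irreducibility of the single cohomology object, hence of $\pm D_G(\pi)$) does follow from concentration plus involutivity, so ``granting it'' is granting the theorem. Moreover, your first proposed route to purity --- a perverse $t$-structure whose simple objects are exactly the shifts of irreducible representations, for which $R\mathbb{D}_G$ is $t$-exact --- is essentially a reformulation of the concentration statement rather than a proof of it: constructing such a $t$-structure requires knowing in which degree each irreducible's dual sits, which is the point at issue. The honest inputs are either Schneider--Stuhler's explicit computation on the Bruhat--Tits building (showing $\mathrm{Ext}^i(\pi,\mathcal{H}(G))$ vanishes outside the degree $d(\pi)$ attached to the cuspidal support of $\pi$) or Aubert's own inductive argument; neither is carried out here. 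Note also that the sign is $\epsilon=(-1)^{d(\pi)}$ and genuinely depends on the cuspidal support of $\pi$, so the supercuspidal computation does not ``fix the sign'' for general $\pi$. As it stands, your proposal is a correct road map with the hard theorem left as a hypothesis.
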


       \begin{remark}
         The map $\pi \mapsto |D_G(\pi)|$ is said to be induced from the Aubert-Zelevinsky involution in this paper.
       \end{remark}

       \begin{theorem} 
       \label{BJL}
        \begin{enumerate}
         \item
         Let $K$ be a number field and let $\Pi$ be an automorphic representation of $\GL_n(\Abb_K)$ of unitary central character $\omega \colon \Abb_K^{\times}/K^{\times} \to \Cbb^{\times}$ which occurs in the discrete spectrum.
         Suppose $\mathbf{B}$ is a central simple algebra over $K$ of dimension $n^2$.
       
         Let S be the finite set of places where $\mathbf{B}$ is not split and assume that S has only non-archimedean places and each $\Pi_{v}$ is a Speh representation or a generalized discrete series representation for $v \in S$.
       
         Then, $\Pi$ can be transferred to a discrete automorphic representation $\Pi'$ of $\mathbf{B}^{\times}(\Abb_K)$ such that $\Pi'_{v}$ or $|D_G(\Pi'_{v})|$ has the same Langlands parameter as $\Pi_v$ for each $v$ $\in$ $S$.
       
         We also have $\Pi_{v} = \Pi'_{v}$ for $v$ which $\mathbf{B}$ splits.
       
        \item  The multiplicity one and strong multiplicity one theorems hold for $\mathbf{B}^{\times}$.
       
        \end{enumerate}
       \end{theorem}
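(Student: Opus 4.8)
The plan is to deduce both parts from Badulescu's global Jacquet--Langlands correspondence \cite{Bad08}, used exactly as in \cite{Prasad}. First I would invoke the Moeglin--Waldspurger description of the discrete automorphic spectrum of $\GL_n$: any $\Pi$ as in (i) is a Speh representation built from a unitary cuspidal automorphic representation of $\GL_{n/\ell}(\Abb_K)$ for some divisor $\ell$ of $n$ (with $\ell=1$ the cuspidal case), so $\Pi$ lies in the class to which \cite{Bad08} applies. The hypothesis that $\Pi_v$ is a Speh representation or an (essentially square-integrable) generalized discrete series representation for each $v\in S$ is precisely the local compatibility condition in Badulescu's transfer theorem: on these classes of representations the (suitably normalized, Tadi\'c-extended) local Jacquet--Langlands map is defined, and the global correspondence of \cite{Bad08} then produces a discrete automorphic representation $\Pi'$ of $\mathbf{B}^\times(\Abb_K)$ with $\Pi'_v\cong\Pi_v$ at every place where $\mathbf{B}$ splits and with $\Pi'_v$ the local Jacquet--Langlands transfer of $\Pi_v$ at each $v\in S$.

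Next I would translate the local transfer into the assertion about Langlands parameters. At a place $v\in S$ where $\Pi_v$ is essentially square-integrable, the local Jacquet--Langlands correspondence preserves Langlands parameters, so $\Pi'_v$ and $\Pi_v$ have the same parameter. At a place $v\in S$ where $\Pi_v$ is a genuinely non-tempered Speh representation, Badulescu's extended local map differs from the ``naive'' Jacquet--Langlands correspondence by the Aubert--Zelevinsky involution on the non-tempered part; concretely, the representation of $\mathbf{B}^\times(K_v)$ having the same Langlands parameter as $\Pi_v$ is $|D_G(\Pi'_v)|$ rather than $\Pi'_v$ itself. Hence in all cases $\Pi'_v$ or $|D_G(\Pi'_v)|$ has the same Langlands parameter as $\Pi_v$, which gives (i). For (ii), the multiplicity one and strong multiplicity one statements for $\mathbf{B}^\times$ are theorems of \cite{Bad08} (relying, for the part concerning the residual spectrum, on the appendix by Grbac to \cite{Bad08}).

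The one point demanding care, and where I would simply follow the bookkeeping of \cite{Prasad}, is reconciling the normalization of the local Jacquet--Langlands map and the sign/Aubert-duality conventions for Speh representations used in the present paper with those of \cite{Bad08}; once the conventions are aligned, the dichotomy ``$\Pi'_v$ or $|D_G(\Pi'_v)|$'' is forced and nothing further is needed.
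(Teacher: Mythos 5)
Your proposal follows the same route as the paper: both parts are obtained by direct citation of Badulescu's global Jacquet--Langlands correspondence \cite{Bad08} (Theorem 5.1 there), exactly as packaged in \cite{Prasad}, with the ``$\Pi'_v$ or $|D_G(\Pi'_v)|$'' dichotomy coming from the normalization of the extended local transfer. The extra detail you supply about Moeglin--Waldspurger and the Aubert involution is consistent with the intended reading, so nothing further is needed.
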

       
       \begin{proof}
         \begin{enumerate}
          \item This is the same statement as \cite[Theorem 1.2]{Prasad} except for the last statement, which is extracted from (a) of \cite[Theorem 5.1]{Bad08}.
          \item This is (b), (c) of \cite[Theorem 5.1]{Bad08}.
         \end{enumerate}
       \end{proof}

\section{Local results what we use}

       In this section, following the globalizing method of \cite{Prasad}, we prove some local results which will be combined with the global results above. 

   \subsection{Triviality of parity at almost all places}
       First, we prove that the local components of a global conjugate self-dual representation are conjugate orthogonal in almost all places.
       
       \begin{lemma}
       \label{unramified}
       Let $G$ be a totally disconnected locally compact Hausdorff topological group and $K$ be an open compact subgroup of $G$. We also assume that there exists an involution $\theta$ of $G$ which stabilizes $K$.
       Let also $\pi$ be an irreducible representation that is conjugate self-dual with respect to $\theta$ and satisfies $\dim_{\Cbb}\pi^{K} = 1$.
       Then $\pi$ is conjugate orthogonal with respect to $\theta$.
       \end{lemma}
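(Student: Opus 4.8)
The plan is to work directly with the invariant pairing attached to $\pi$ and to prove it is symmetric, which is exactly the assertion that the parity equals $+1$. Concretely, apply the general framework of Section~\ref{parity} to the conjugating pair $(G,\,G\rtimes\langle\theta\rangle)$ with $\tau=\theta$; since $\theta$ is an involution, $\tau^2$ is the identity element of $G$, so the defining relation of the parity becomes $\langle y,x\rangle = C(G,G',\pi)\,\langle x,y\rangle$, and there is a non-degenerate bilinear pairing $\langle\cdot,\cdot\rangle$ on $V$ with $\langle\pi(\theta(g))x,\pi(g)y\rangle=\langle x,y\rangle$ for all $g\in G$, $x,y\in V$. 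It thus suffices to find one pair $(x,y)$ with $\langle x,y\rangle\neq 0$ and $\langle y,x\rangle=\langle x,y\rangle$, and the natural candidate is $x=y=v_0$, a spanning vector of the line $\pi^K$.

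First I would use the $\theta$-stability of $K$. Normalize $\mathrm{vol}(K)=1$ and let $e_K=\int_K\pi(k)\,dk$ be the projector onto $\pi^K$ (this makes sense since $\pi$ is smooth and $K$ is compact, independently of admissibility). Substituting $y\mapsto\pi(g)^{-1}y$ in the invariance relation gives $\langle\pi(\theta(g))x,y\rangle=\langle x,\pi(g)^{-1}y\rangle$; setting $g=\theta(k)$ with $k\in K$ and using $\theta(\theta(k))=k$ yields $\langle\pi(k)x,y\rangle=\langle x,\pi(\theta(k))^{-1}y\rangle$. Taking $x=v_0\in\pi^K$ and letting $k$ run over $K$, so that $\theta(k)$ runs over $\theta(K)=K$, shows that the linear functional $\langle v_0,\cdot\rangle$ is $\pi(K)$-invariant, hence factors through $e_K$: one has $\langle v_0,y\rangle=\langle v_0,e_Ky\rangle$ for all $y\in V$.

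Next I would deduce $\langle v_0,v_0\rangle\neq 0$: since $\dim_{\Cbb}\pi^K=1$, for every $y\in V$ the vector $e_Ky$ lies in $\Cbb v_0$, so $\langle v_0,y\rangle\in\Cbb\,\langle v_0,v_0\rangle$; were $\langle v_0,v_0\rangle$ zero, the functional $\langle v_0,\cdot\rangle$ would vanish identically, contradicting the non-degeneracy of $\langle\cdot,\cdot\rangle$. Finally, plugging $x=y=v_0$ into $\langle y,x\rangle=C(G,G',\pi)\,\langle x,y\rangle$ and cancelling $\langle v_0,v_0\rangle\neq 0$ gives $C(G,G',\pi)=1$, i.e.\ $\pi$ is conjugate orthogonal with respect to $\theta$.

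I do not expect any genuine obstacle; this is the conjugate-self-dual analogue of the familiar fact that an unramified self-dual representation is orthogonal, and the proof has the same shape. The only points requiring care are the bookkeeping with $\theta$ — that $\theta(\theta(k))=k$ and $\theta(K)=K$ are invoked at the right moments — and the observation that the averaging operator $e_K$ is available for the smooth representation $\pi$ without any further hypothesis.
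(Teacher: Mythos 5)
Your proposal is correct and follows the same route as the paper: both pass to the conjugating pair $(G, G\rtimes\langle\theta\rangle)$ with $\tau=\theta$, use $\tau^2=1$ to reduce the parity to symmetry of the invariant pairing, and conclude by showing the pairing is nonzero on the line $\pi^K$. The paper merely asserts that the restriction of the form to $\pi^K$ is non-degenerate; your averaging argument via $e_K$ and the $\theta$-stability of $K$ is exactly the detail needed to justify that assertion.
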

   
       \begin{proof}
       Let $G'$ be a semi-direct product of $G$ and $\Zbb/2\Zbb$ defined by $\theta$. Then, we have a conjugating pair $(G, G')$ and use the notation in Subsection \ref{parity}. 
       Note that in this case, we have $\tau^2 = 1$.
       Let $\langle \cdot, \cdot\rangle$ be the invariant form in Subsection \ref{parity}. 
       Then the restriction of this form to $\pi^{K}$ is non-degenerate and symmetric, hence by the definition of the parity, we have $C(G, G', \pi)=1$.
       \end{proof}
       
        \begin{lemma}
         \label{ungrp}
        Let $\mathrm{U}(n)$ be the closed subgroup of $\GL_n(\Cbb)$ consisting of unitary matrices. 
        Then all the unitary finite-dimensional representations of $\mathrm{U}(n)$ are conjugate self-dual representations with respect to component-wise complex conjugation $\conj$ on $\mathrm{U}(n)$.
        \end{lemma}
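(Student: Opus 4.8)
The plan is to reduce the statement to an equality of characters. Since $\mathrm{U}(n)$ is compact, any finite-dimensional unitary (hence continuous) representation $(\pi, V)$ is completely reducible, so it is determined up to isomorphism by its character $\chi_{\pi}$, which is a continuous class function on $\mathrm{U}(n)$. The relevant conjugating pair is $(\mathrm{U}(n),\ \mathrm{U}(n) \rtimes \Zbb/2\Zbb)$ with the $\Zbb/2\Zbb$-action given by $\conj$, exactly as in the semidirect-product construction used in the proof of Lemma \ref{unramified}; by Remark \ref{realcase} the notions of conjugate self-duality and parity make sense for this reductive Lie group. Hence it suffices to prove $\pi^{\conj} \cong \pi^{\vee}$, and by semisimplicity this amounts to the identity $\chi_{\pi^{\conj}} = \chi_{\pi^{\vee}}$ of functions on $\mathrm{U}(n)$.

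Next I would unwind both sides. By definition $\chi_{\pi^{\conj}}(g) = \chi_{\pi}(\conj(g)) = \chi_{\pi}(\bar g)$, where $\bar g$ is the entrywise complex conjugate (note that $\conj$ does preserve $\mathrm{U}(n)$, since $\overline{g^{*}} = (\bar g)^{*}$), while $\chi_{\pi^{\vee}}(g) = \chi_{\pi}(g^{-1})$. Thus the lemma reduces to the purely group-theoretic claim that $\bar g$ and $g^{-1}$ are conjugate in $\mathrm{U}(n)$ for every $g \in \mathrm{U}(n)$.

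This last claim follows from the spectral theorem: write $g = u\, d\, u^{-1}$ with $u \in \mathrm{U}(n)$ and $d = \diag(\lambda_1,\dots,\lambda_n)$, $|\lambda_i| = 1$. Since $|\lambda_i| = 1$ we have $\lambda_i^{-1} = \bar\lambda_i$, so both $g^{-1} = u\, \bar d\, u^{-1}$ and $\bar g = \bar u\, \bar d\, \bar u^{-1}$ (with $\bar u \in \mathrm{U}(n)$) are $\mathrm{U}(n)$-conjugate to the diagonal matrix $\bar d = \diag(\bar\lambda_1,\dots,\bar\lambda_n)$, hence to one another. Therefore $\chi_{\pi}(\bar g) = \chi_{\pi}(g^{-1})$ because $\chi_{\pi}$ is a class function, giving $\chi_{\pi^{\conj}} = \chi_{\pi^{\vee}}$ and hence $\pi^{\conj} \cong \pi^{\vee}$.

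There is no serious obstacle here; the only points that need care are the standard facts that finite-dimensional unitary representations of a compact group are semisimple and thus recovered from their characters, and that the Section~1 definition of conjugate self-duality applies verbatim to $\mathrm{U}(n)$ via the semidirect-product construction. One could instead argue basis-freely by fixing a $\mathrm{U}(n)$-invariant inner product on $V$ to identify $\pi^{\vee}$ with the complex-conjugate representation $\bar\pi$ and then exhibiting an explicit intertwiner $\bar\pi \cong \pi^{\conj}$, but the character computation above is shorter and avoids bookkeeping with conjugate-linear maps.
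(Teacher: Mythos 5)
Your proof is correct and follows essentially the same route as the paper: both reduce conjugate self-duality to the character identity $\chi_{\pi}(\conj(g))=\chi_{\pi}(g^{-1})$ and verify it by observing that $\bar g$ and $g^{-1}$ (equivalently, $\conj(g)^{-1}={}^{t}g$ and $g$) are conjugate in $\mathrm{U}(n)$, via diagonalization/the conjugation theorem for compact connected groups. The only difference is cosmetic bookkeeping in which pair of elements one compares.
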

       
       \begin{proof}
       Let $\delta$ be a unitary finite-dimensional representation and $\chi_{\delta}$ be its character.
       Then we have 
       $\chi_{\delta}(\conj(k)^{-1})$ = $\chi_{\delta}(^{t}k)$ and this is equal to $\chi_{\delta}(k)$ by the conjugation theorem for connected compact Lie groups.
       \end{proof}
       
       We use the notations of Subsection \ref{notation}.
       
       \begin{lemma}
        \label{split}
        \begin{enumerate}
        \item Let $E/F$ be a quadratic extension of $p$-adic fields. 
        We take the conjugating pair $(G, G')$  associated with $(M_n(E), M_{2n}(F))$.
        Then all the conjugate self-dual representations of $G$ are orthogonal.
        
        \item Let $\Cbb/\Rbb$ be the quadratic extension of archimedean local fields.
        Let the conjugating pair $(G, G')$ be associated with $(M_n(\Cbb), M_{2n}(\Rbb))$ and also let $K$ = $\mathrm{U}(n)$. 
        Then all the conjugate self-dual $(\mathfrak{g},K)$-modules of $G$ are orthogonal.
        
        \item Let $E$ be a quadratic split \'etale algebra over a local field $F$. Then all the conjugate self-dual representations of $G=\GL_n(E)$ are orthogonal.
        \end{enumerate}
       
        \end{lemma}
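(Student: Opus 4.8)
The plan is to reduce all three parts to the single assertion that the invariant bilinear pairing $\langle\cdot,\cdot\rangle$ of Subsection \ref{parity} attached to a conjugate self-dual $\pi$ is \emph{symmetric}, after first making each conjugating pair completely explicit. In case (iii) the pair is that of Example \ref{switch}, with $\tau$ the involution switching the two copies of $\GL_n(F)$. In cases (i) and (ii) the key observation is that $M_n(E)$ (resp.\ $M_n(\Cbb)$) is the base change of $M_n(F)$ (resp.\ $M_n(\Rbb)$), so by the Remark following Corollary \ref{conjpair} the defining exact sequence splits; hence $\tau\in G'\setminus G$ may be chosen with $\tau^2=1$, acting on $G$ by the entrywise Galois conjugation (resp.\ complex conjugation $\conj$) and stabilising $K=\GL_n(\Ocal_E)$ (resp.\ $\mathrm{U}(n)$). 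In every case $\tau^2=1$, so the definition of the parity gives $C(G,G',\pi)=1$ precisely when $\langle\cdot,\cdot\rangle$ (the pairing with $\langle\pi(\tau g)x,\pi(g)y\rangle=\langle x,y\rangle$) is symmetric. It therefore suffices, in each case, to establish that symmetry.

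For (iii) this is immediate: $\pi^\tau\cong\pi^\vee$ forces $\pi\cong\pi_1\boxtimes\pi_1^\vee$ for an irreducible $\pi_1$ of $\GL_n(F)$, realised on $V_1\otimes V_1^\vee$, and the pairing $\langle v\otimes\lambda,\ v'\otimes\lambda'\rangle:=\lambda(v')\lambda'(v)$ is nondegenerate, $\tau$-invariant (a one-line check), and manifestly symmetric. For (ii), the pairing is invariant under $K\rtimes\langle\conj\rangle$, and by Lemma \ref{ungrp} every $K$-type of $\mathrm{U}(n)$ is $\conj$-conjugate self-dual, so $\langle\cdot,\cdot\rangle$ respects the $K$-isotypic decomposition and pairs each isotypic component with itself. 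I would then use that every irreducible representation $\delta$ of $\mathrm{U}(n)$ arises from a Schur functor twisted by a power of $\det$, hence has a model over $\Rbb$; with respect to such a model $\delta(\conj k)=\overline{\delta(k)}$, so the $\conj$-conjugate self-dual pairing on $\delta$ agrees with the one induced by coordinatewise conjugation, which squares to $+1$ and is symmetric. Running the resulting Frobenius--Schur computation (with Dixmier's lemma in place of Schur's, as in Remark \ref{realcase}) shows $\langle\cdot,\cdot\rangle$ is symmetric, so $\pi$ is orthogonal.

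For (i), if $\pi$ is unramified then $\dim_\Cbb\pi^K=1$ and Lemma \ref{unramified} applies verbatim. For general $\pi$ the plan is to reduce to the generic case: write $\pi$ as a constituent of a conjugate self-dual generic representation $\Sigma$ parabolically induced from essentially square-integrable data, check that the sign on $\Sigma$ passes to $\pi$, and for generic $\Sigma$ realise the invariant pairing through the Whittaker model, where — because the Galois conjugation fixes the standard Whittaker datum of $\GL_n(E)$ up to a harmless torus conjugation — symmetry follows from the symmetry of the Rankin--Selberg pairing under $g\mapsto w_n\,{}^tg^{-1}$, as in Gelfand--Kazhdan.

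The main obstacle is exactly the symmetry of the invariant pairing in parts (i) and (ii), i.e.\ ruling out the conjugate symplectic alternative; everything else is bookkeeping with the conjugating pairs. In (ii) this is settled by the rationality of the polynomial representations of $\GL_n$. In (i) the two delicate points are the $\sigma$-equivariance of the Whittaker functional and the transfer of the sign from a generic induced representation to an arbitrary irreducible constituent, and that is where I would expect the work to lie.
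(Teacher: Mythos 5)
Your reduction of all three parts to the symmetry of the invariant pairing, and your observation that for $(M_n(E),M_{2n}(F))$ and $(M_n(\Cbb),M_{2n}(\Rbb))$ the defining sequence splits so that $\tau$ may be taken to be the entrywise Galois conjugation with $\tau^2=1$, are correct, and your part (iii) coincides with the paper's proof. The problems are in (i) and (ii). For (i), the paper does not prove the statement from scratch: it invokes Lemma \ref{unramified} together with the cited theorem of Atobe, which asserts precisely that every conjugate self-dual irreducible representation of $\GL_n(E)$ is conjugate orthogonal for the Galois involution. What you propose instead is a proof of that theorem itself --- Whittaker-model symmetry for generic representations plus a transfer of the sign to arbitrary irreducible constituents --- and you explicitly leave both of its key steps open. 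Since the lemma is needed for \emph{all} conjugate self-dual irreducibles (the local components arising in Proposition \ref{prodform} need not be generic or unramified), those steps are not bookkeeping: as written, part (i) is unproven, and the honest fix is to cite the known theorem or to carry out in full the reduction through the classification that you only sketch.

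For (ii) there is a concrete gap in your Frobenius--Schur computation: knowing that each $K$-type $\delta$ is $\conj$-conjugate self-dual with a symmetric pairing does not determine the symmetry of $\langle\cdot,\cdot\rangle$ on an isotypic component occurring with multiplicity $m>1$, where the restriction of the form has the shape $B_\delta\otimes\beta$ with $\beta$ an uncontrolled nondegenerate bilinear form on the $m$-dimensional multiplicity space. The paper closes this by invoking Vogan's theorem that $\pi$ has a $K$-type occurring with multiplicity one, restricting the invariant form to that single isotypic component, and only then applying Lemma \ref{ungrp}; you need that multiplicity-one input (your rationality-of-Schur-functors argument is a reasonable substitute for Lemma \ref{ungrp}, but not for the multiplicity-one theorem).
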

       
        \begin{proof}
       
         \begin{enumerate}
         \item 
         This follows from Lemma 4.1 above and \cite[Theorem 1.1]{Atobe} for $E$.
         
         \item 
         We prove this in a similar fashion as Lemma 4.1. Note that $K$ is stable under the complex conjugation.
         We use the same notation as Lemma 4.1. (See also Remark \ref{realcase})
         
         Let $(\pi, V)$ be a conjugate self-dual $(\mathfrak{g}, K)$-module of $G$.
         By \cite[Theorem 4.9]{Vog86}, there exists a $K$-type $\delta$ that appears in $\pi$ with multiplicity one. Let $V({\delta})$ be the $\delta$-isotypic component of $\pi$. 
         
         Then, it suffices to show that the restriction of a non-degenerate invariant form in Subsection \ref{parity} to $V({\delta})$ is non-degenerate.
         This follows from Lemma \ref{ungrp}.
        
         \item 
         Let $\pi$ = $\pi_1 \otimes \pi_2$ be an irreducible conjugate self-dual representation of $G$. In this case, we have $\pi_2 \cong \pi_1^{\vee}$.
         We identify these representations.
         
         Then we can explicitly give a symmetric invariant form by $\langle x_1 \otimes x_2, y_1 \otimes y_2 \rangle$ = $\langle y_2, x_1 \rangle$ $\langle x_2, y_1 \rangle$ where the pairing of right hand side is the  canonical pairing between $\pi_1$ and $\pi_1^{\vee}$.
          \end{enumerate}
          \end{proof}

   \subsection{A special case of the main theorem}
   
     In this subsection, we introduce a special case of the main theorem that has already been proven by Mieda and that we will use later.
   
         \begin{theorem}[{\cite[Theorem 1.2 and Proposition 4.5]{Mie}}]
         \label{mie}
         
         \begin{enumerate}
          \item
          The main theorem \ref{MainThm} is valid for the case where $E/F$ is at worst tamely ramified and the Hasse invariant of $D$ is $1/n$, and the representation is supercuspidal.
          \item
          In the situation of (i) and if $E/F$ is unramified, there exist both conjugate orthogonal and conjugate symplectic simple supercuspidal representations.
         \end{enumerate}
         
         \end{theorem}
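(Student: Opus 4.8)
The plan is to deduce the statement directly from \cite{Mie}, reconciling our formalism with Mieda's: part~(i) will be \cite[Theorem~1.2]{Mie} and part~(ii) is \cite[Proposition~4.5]{Mie}, so the only thing to do is to check that Theorem~\ref{MainThm}, together with the constructions of Sections~1 and~2, specializes correctly under the hypotheses ``$E/F$ at worst tamely ramified and $D$ of Hasse invariant $1/n$''. Numerically, demanding that the Hasse invariant $s/d$ of $D$ equal $1/n=1/(md)$ forces $s=m=1$ and $d=n$ in Construction~\ref{locext}; substituting $m=s=1$ into the relation of Theorem~\ref{MainThm} gives
$$ c(G,G',\pi) = (-1)^{n-1}\, c(W_E,W_F,\sigma), $$
which is exactly the shape of the parity relation in \cite[Theorem~1.2]{Mie}.

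It then remains to match the two invariants. On the Galois side, $c(W_E,W_F,\sigma)$ is by construction the conjugate self-duality sign of \cite[Section~2]{GGP}, which is the notion Mieda uses. On the group side, with $m=1$, $d=n$, $s=1$ the algebra $B$ of Construction~\ref{locext} is the central division algebra $D'$ over $F$ of Hasse invariant $1/2n$; since $D'\otimes_F E$ has invariant $1/n$ and $E$-dimension $4n^2$, it is isomorphic to $M_2(D)$, and the conjugating pair attached to $(A,B)$ is $(D^\times,\Norm_{D'^\times}(D^\times))$ --- precisely the pair of \cite[Definition~2.10]{Mie}, which Construction~\ref{locext} was built to generalize. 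Concretely, when $E/F$ is unramified one may take the $F$-embedding $D=E_n[\Pi]\hookrightarrow F_{2n}[\Pi']=D'$ that is the identity on $E_n=F_{2n}$ and sends $\Pi$ to $\Pi'^{2}$; then $\Int(\Pi')$ restricts on $D$ to the automorphism that fixes $\Pi$ and acts by the Frobenius of $E_n$ over $F$, i.e.\ Mieda's $\tau$, while $\Pi'$ lies in $\Norm_{D'^\times}(D^\times)\setminus D^\times$ because $\Cent_{D'}(E)=D$ (Lemma~\ref{galconj}) whereas $\Int(\Pi')$ acts nontrivially on $E$. Since the parity does not depend on the element of $G'\setminus G$ chosen (nor, by Skolem--Noether, on the embedding $\phi$), this identifies $c(G,G',\pi)$ with Mieda's $c(\pi)$. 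The tamely ramified case is handled the same way, only the explicit form of the embedding $D\hookrightarrow D'$ changing.

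Granting these identifications, part~(i) is \cite[Theorem~1.2]{Mie} and part~(ii) is \cite[Proposition~4.5]{Mie} (existence of conjugate self-dual simple supercuspidal representations of $D^\times$ of both parities). The one point that is not purely bookkeeping is the last: one must verify that the element of $G'\setminus G$ furnished abstractly in Section~2 via Skolem--Noether and Hilbert's Theorem~90 can be taken to be a genuine prime element $\Pi'$ with $\Pi'^2=\Pi$, so that the pairing used to define our parity in Subsection~\ref{parity} is literally the one Mieda uses. This is the observation already recorded in the outline of the proof, and checking it is a matter of unwinding the two constructions rather than proving anything new; I expect it to be the main (and essentially only) obstacle.
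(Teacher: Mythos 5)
Your proposal is correct and matches the paper's treatment: the paper offers no proof of this statement beyond citing \cite[Theorem 1.2 and Proposition 4.5]{Mie}, the compatibility of Construction \ref{locext} with Mieda's Definition 2.10 being exactly what that construction was designed for (as recorded in the outline of the proof). Your reconciliation of the two parity conventions via $\Int(\Pi')$ is the right bookkeeping and is consistent with what the paper asserts.
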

   
     For references about simple supercuspidal representations, see the beginning of \cite[Section 4.1]{Mie}. Mieda computed the parity of those representations using an explicit description of the local Jacquet-Langlands correspondence for simple supercuspidal representations proved in \cite{IT}. 

     \begin{remark}
       In our conjugate self-dual case, we cannot utilize the supercuspidal representations of depth zero as in \cite{Prasad}, because there is no such representation if the rank of the division algebra is even. 
       We can deduce this fact from the classification of such representations stated in \cite[Section 4]{Prasad}.
     \end{remark}
   
   \subsection{Parity invariance under the Aubert-Zelevinsky involution}
     
     We will use Theorem \ref{BJL}, but in this correspondence, a discrete series representation $\pi$ at a non-split place might be switched its place with $|D_G(\pi)|$. 
     In this subsection, we prove that the map induced from the Aubert-Zelevinsky involution does not change the parity for conjugate self-dual representations of the group of units of central simple algebras over non-archimedean local fields, following the method of Prasad-Ramakrishnan \cite{Prasad}. 
    
     We use the notation fixed in Construction \ref{locext} and only consider the cases of non-split places.
     Note firstly that in this situation, we may assume (and do assume) that the conjugate action of a conjugating pair preserves all the parabolic and Levi subgroups over $F$ by the Skolem-Noether theorem. 
     We give the detail.

     \begin{remark}

       \begin{enumerate}
        \item Let $E/F$ be a quadratic extension of $p$-adic fields.
        We take the central simple algebra $A$ over $E$ as in Construction \ref{locext} and use the notation in the construction.
        Then, we write $A$ in the form that $A$ = $M_m(D)$ where $D$ is a division algebra over $E$ whose Hasse invariant is $s/d$.
        In this case, $B$ can be written in the form that $M_m(B')$ where $B'$ is a central simple algebra over $F$ whose Hasse invariant is $s/2d$.
        
        We obtain an $F$-algebra homomorphism $D \to B'$ by Lemma \ref{keylemma1}, and $M_m(D)\to M_{m}(B')$ by applying $M_m( \cdot )$.
        We take this homomorphism as $A \to B$.
        So, if we take an element $h \in B'^{\times} \setminus D^{\times}$ which normalizes $D^{\times}$, we can take $ \tau = \diag(h, \ldots, h) $
        as an element of $G' \setminus G$, where $\diag(\ldots)$ denotes an element of the diagonal subgroup $(B'^{\times})^n \leq \GL_{m}(B')$.

        \item It is easily seen that $\Int(\tau)$ above fixes a Haar measure of $G$, so $G'$ is unimodular.
        Also, $\Int(\tau)$ preserves all the parabolic subgroups of $G$ over $E$ and their Haar measures.
      \end{enumerate}
     \end{remark}

     \begin{definition}
       \begin{enumerate}
         \item For each parabolic subgroup $P$ of $G$ over $F$, we define $P'$ as $\Norm_{G'}(P)$.
         It is equal to the semidirect product defined by the action of $\tau$ on $P$.
         We define $M'$ as the group generated by $\tau$ and $M$ for all the Levi subgroups $M$ of $G$ over $F$.
         \item We define the normalized induction functor $i_{P'}^{G'}$ and the normalized Jacquet module functor $r_{P'}^{G'}$ for a parabolic subgroup $P$ of $G$ over $F$ in the similar fashion as the case of $G$.
       \end{enumerate}
     \end{definition}

     \begin{remark}
       \label{chain}
       If $\rho$ is a representation of Levi subgroup $M$ of a parabolic subgroup $P$ of $G$ over $F$, we easily obtain a natural isomorphism $i_{P'}^{G'}(\mathrm{ind}_M^{M'}(\rho)) \cong \mathrm{ind}_{G}^{G'}(i_{P}^{G}(\rho))$.
       We have the similar isomorphism for the Jacquet functors.
     \end{remark}
     
     \begin{proposition}
     \label{AZ}
     In the situation of Construction \ref{locext}, the map induced from the Aubert-Zelevinsky involution does not change the parity of a unitary conjugate self-dual representation $\pi$ of $G$ such that $|D_G(\pi)|$ is also unitary.
    \end{proposition}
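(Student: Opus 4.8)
The plan is to adapt the local input of Prasad--Ramakrishnan \cite{Prasad}, propagating the invariant bilinear form through the alternating sum that defines $D_G$. Throughout I keep the normalization of $\tau$ from the preceding Remark, so that $\Int(\tau)$ stabilizes every standard $F$-parabolic $P = MN$ of $G$ together with its standard Levi $M$ and, by Remark \ref{chain}, commutes with normalized parabolic induction and with the Jacquet functor: $i_P^G(\rho^\tau) \cong i_P^G(\rho)^\tau$ and $r_P^G(\pi^\tau) \cong r_P^G(\pi)^\tau$. There are two things to establish: that $|D_G(\pi)|$ is again conjugate self-dual with respect to $(G,G')$, and that $C(G,G',|D_G(\pi)|) = C(G,G',\pi)$.

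For the first point I would show that, on the Grothendieck group, $D_G$ commutes both with the contragredient and with $\Int(\tau)$. Commutation with the contragredient is standard: $i_P^G(\rho)^\vee \cong i_{\overline P}^G(\rho^\vee)$, Casselman's duality gives $r_P^G(\pi)^\vee \cong r_{\overline P}^G(\pi^\vee)$, and $A_P$ depends only on $M$, so the signs $(-1)^{\dim A_P}$ are unchanged under $P \mapsto \overline P$. Commutation with $\Int(\tau)$ follows from the two isomorphisms recalled above, since $\dim A_P$ depends only on $P$. Hence $\pi^\tau \cong \pi^\vee$ forces $D_G(\pi)^\tau \cong D_G(\pi)^\vee$, and since $D_G(\pi) = \pm\,|D_G(\pi)|$ we get $|D_G(\pi)|^\tau \cong |D_G(\pi)|^\vee$. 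The unitarity of $\pi$ and of $|D_G(\pi)|$ enters here only to guarantee admissibility, so that the contragredient is well-behaved and Schur's lemma applies.

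For the parity, fix the invariant pairing $\langle\cdot,\cdot\rangle$ on $\pi$ with $\langle\pi(\Int(\tau)g)x,\pi(g)y\rangle = \langle x,y\rangle$, normalized so that $\langle\pi(\tau^2)y,x\rangle = C(G,G',\pi)\langle x,y\rangle$. Using Casselman's duality, the $\tau$-stability of the standard parabolics, and the Bernstein--Zelevinsky description of Jacquet modules for $\GL$-type groups — so that the longest Weyl element, which is inner and therefore trivial on isomorphism classes, is the only Weyl-group input relating $P$ to $\overline P$ — I would propagate $\langle\cdot,\cdot\rangle$ to a compatible family of invariant pairings on the terms $i_P^G r_P^G(\pi)$ occurring in $D_G(\pi)$; the constituents which are not individually conjugate self-dual appear in $\Int(\tau)$-contragredient pairs and thus carry parity-neutral hyperbolic forms. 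Feeding this into $D_G(\pi) = \sum_P (-1)^{\dim A_P} i_P^G r_P^G(\pi) = \pm\,|D_G(\pi)|$, the hyperbolic parts cancel and one reads off $C(G,G',|D_G(\pi)|) = C(G,G',\pi)$.

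The hard part is precisely the sign bookkeeping in this last step, together with the reducibility of the Jacquet modules $r_P^G(\pi)$. I expect to handle it as in \cite{Prasad}: the unitarity hypotheses, via Tadi\'c's classification of the unitary dual and the compatibility of $D_G$ and of the parity (Lemma \ref{multi}) with parabolic induction, reduce the problem to the case where $\pi$ is a single Speh-type representation $u^D(\delta,k)$; its Aubert dual is again a Speh-type representation, and both are subquotients of a common representation parabolically induced from supercuspidal data, whose explicit Jacquet modules let one match symmetry constants block by block via Lemma \ref{multi}. The delicate point throughout is to confirm that transporting the form through $D_G$ introduces no spurious sign, which comes down to a careful analysis of the interaction of $\Int(\tau)$ with the longest Weyl element.
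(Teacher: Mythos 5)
Your first step --- that $D_G$ commutes with the contragredient and with $\Int(\tau)$ on the Grothendieck group, so that $|D_G(\pi)|$ is again conjugate self-dual --- is correct and is implicitly used in the paper as well. The problem is the second half. Everything that actually computes the parity is deferred: ``I would propagate $\langle\cdot,\cdot\rangle$ to a compatible family of invariant pairings\dots'', ``I expect to handle it as in \cite{Prasad}\dots''. But the step you defer is the entire content of the proposition, and the mechanism you propose for it does not make sense as stated: $D_G(\pi)$ is only a virtual representation, an alternating sum in the Grothendieck group, and an invariant bilinear form (let alone its symmetry constant) is not an additive invariant --- there is no way to ``feed a compatible family of pairings into $\sum_P(-1)^{\dim A_P}i_P^Gr_P^G(\pi)$'' and ``read off'' a sign on the irreducible representation $|D_G(\pi)|$ that survives the cancellations. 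This is precisely the obstruction that forces Prasad--Ramakrishnan (and the paper) to abandon direct sign bookkeeping. Your appeal to \cite{Prasad} is therefore misdirected: their actual device is not form propagation plus Tadi\'c's classification, but a rationality criterion.

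Concretely, the paper's proof replaces ``carries a symmetric invariant form'' by ``is defined over $\Rbb$'', which, unlike a bilinear form, \emph{does} behave well in the Grothendieck group. One inducts $\pi$ up to $G'$: by Lemmas \ref{AZind} and \ref{AZorth}, $\pi$ is conjugate orthogonal iff $\ind_G^{G'}(\pi)$ is defined over $\Rbb$. The functors $i_{P'}^{G'}r_{P'}^{G'}$ are defined by rational operations and hence preserve the subgroup of the Grothendieck group generated by finite-length representations defined over $\Rbb$ (Lemma \ref{AZdefR}), so $\ind_G^{G'}(|D_G(\pi)|)$ lies in that subgroup; a separate semisimplicity argument (Lemma \ref{ssdefR}) upgrades membership in the subgroup to an actual real structure, and Lemma \ref{AZorth} converts this back into conjugate orthogonality of $|D_G(\pi)|$. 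No Weyl-element analysis, no Jacquet-module combinatorics, and no reduction to Speh representations is needed. To repair your write-up you should either carry out this rationality argument or genuinely execute the block-by-block computation you sketch --- as it stands, the sign you need is asserted, not proved.
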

   
       We prove this proposition by using some lemmas proved below.
       The following lemma is essentially the same statement as \cite[Lemma 3.5]{GGP}.

        \begin{lemma}
          \label{AZind}
         For the conjugating pair $(G, G')$ and a conjugate self-dual representation $(\pi, V)$, $\ind^{G'}_{G}(\pi)$ is a multiplicity-free, has length at most 2 and self-dual representation of $G'$ with the same parity as $(\pi, V)$. 
         It is unitary if $\pi$ is unitary.
        \end{lemma}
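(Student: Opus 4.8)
The plan is to avoid any abstract manipulation of dualities and instead to write down the $G'$-invariant form by hand in an explicit model. Fix $\tau \in G' \setminus G$, so that $G' = G \sqcup G\tau$ and $\tau^2 \in G$, and realize $W := \ind_G^{G'}(\pi)$ as the space of functions $f \colon G' \to V$ with $f(\gamma g') = \pi(\gamma) f(g')$ for $\gamma \in G$, $g' \in G'$, on which $G'$ acts by right translation $(\rho(h)f)(g') = f(g'h)$; because $[G':G] = 2$ this agrees with both the smooth and the compactly supported induction. Evaluating $f$ at $1$ and at $\tau$ identifies $W$ with $V \oplus V$, and a direct computation shows that under this identification $g \in G$ acts by $(v_1, v_2) \mapsto (\pi(g)v_1, \pi^{\tau}(g)v_2)$ while $\tau$ acts by $(v_1, v_2) \mapsto (v_2, \pi(\tau^2)v_1)$. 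In particular $W|_G \cong \pi \oplus \pi^{\tau}$.

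From $W|_G \cong \pi \oplus \pi^{\tau}$ and the irreducibility of $\pi^{\tau}$, Clifford theory for the index-two subgroup $G$ (compare \cite[Section 3]{GGP}) gives the standard dichotomy: either $\pi \not\cong \pi^{\tau}$, in which case $W$ is irreducible, or $\pi \cong \pi^{\tau}$, in which case $W$ is the direct sum of the two distinct extensions of $\pi$ to $G'$. Either way $W$ has length at most $2$ and is multiplicity-free. (Alternatively one can argue directly: $\Hom_{G'}(W,W) \cong \Hom_G(\pi, \pi \oplus \pi^{\tau})$ is at most $2$-dimensional by Frobenius reciprocity, which already rules out repeated constituents.) If $\pi$ is unitary with $\pi$-invariant inner product $(\cdot,\cdot)_V$, then $\langle f_1, f_2 \rangle := \sum_{g' \in G \backslash G'} (f_1(g'), f_2(g'))_V$ is well defined (the summand does not depend on the coset representative, by $\pi$-invariance of $(\cdot,\cdot)_V$) and is $G'$-invariant, so $W$ is unitary.

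The core of the argument is the parity comparison, carried out with the canonical pairing $B \colon V \times V \to \Cbb$ of Subsection~\ref{parity}, normalized so that $B(\pi^{\tau}(g)x, \pi(g)y) = B(x,y)$ for all $g \in G$, and the constant $C := C(G,G',\pi)$, characterized by $B(\pi(\tau^2)y, x) = C\, B(x,y)$. On $W = V \oplus V$ I would define
$$ \langle (v_1, v_2), (w_1, w_2) \rangle := B(v_2, w_1) + C\, B(w_2, v_1). $$
Invariance under $G$ is immediate, since each of the two summands is separately $G$-invariant by the defining property of $B$. Invariance under $\tau$ is checked by substituting $\rho(\tau)(v_1, v_2) = (v_2, \pi(\tau^2)v_1)$ and applying the relation $B(\pi(\tau^2)y, x) = C\, B(x,y)$ to each of the two resulting terms, using $C^2 = 1$. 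Non-degeneracy is inherited from that of $B$. Since $G'$ is generated by $G$ and $\tau$, the form is $G'$-invariant, so $W$ is self-dual; and a one-line computation gives $\langle (w_1, w_2), (v_1, v_2) \rangle = C\, \langle (v_1, v_2), (w_1, w_2) \rangle$, so the form is symmetric precisely when $C = 1$ and alternating precisely when $C = -1$. Thus $W$ is self-dual and carries a $G'$-invariant form of the same type as the parity of $\pi$; when $W$ is irreducible this pins its parity down to $C(G,G',\pi)$, and in the reducible case one reaches the same conclusion by restricting the form to the two constituents.

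The only point I expect to require care is the bookkeeping of the order of the arguments of $B$ and the accompanying signs --- both in computing the explicit action of $\tau$ on $W = V \oplus V$ and in verifying the $\tau$-invariance and the symmetry of $\langle \cdot, \cdot \rangle$; everything else is formal. An alternative route for the parity step would follow \cite[Lemma 3.5]{GGP} via determinants and Frobenius--Schur indicators, but exhibiting the form directly is shorter and delivers non-degeneracy, self-duality, and (together with the inner product above) unitarity in one package.
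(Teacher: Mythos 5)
Your proposal is correct and amounts to a fully worked-out version of the paper's (one-line) proof: the length and multiplicity-freeness come from Frobenius reciprocity/Mackey theory exactly as the paper indicates, and your explicit form on $V \oplus V$ is precisely the computation behind the paper's remark that the self-duality, parity, and unitarity assertions are "obvious by the definition." The sign bookkeeping in your $\tau$-invariance and symmetry checks is consistent with the paper's normalization $B(\pi^{\tau}(g)x,\pi(g)y)=B(x,y)$ and $B(\pi(\tau^2)y,x)=C\,B(x,y)$, so no correction is needed.
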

        
        \begin{proof}
         The statement about multiplicity-freeness and the length follows from the Frobenius reciprocity. The second and third assertions are obvious by the definition of conjugate self-duality and parity.
        \end{proof}
        
        \begin{definition}
          A representation of a group $G$ is said to be defined over $\Rbb$ if it has a real form.
        \end{definition}

        \begin{lemma}
          \label{AZorth}
        Let $\pi'$ be an admissible unitary multiplicity-free self-dual representation of $G'$ with parity $c$ $\in$ $\{\pm 1\}$. Then, $c$ = $1$ if and only if $\pi'$ is a representation of $G'$ defined over $\Rbb$.
        \end{lemma}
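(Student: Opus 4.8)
The plan is to run the classical real-structure (Frobenius--Schur) argument, paying attention to the fact that, by Lemma~\ref{AZind}, $\pi'$ need not be irreducible but only multiplicity-free of length at most two. I would fix once and for all a nonzero $G'$-invariant bilinear form $B\colon V\times V\to\Cbb$ realizing the parity, so that $B(w,v)=c\,B(v,w)$, together with a $G'$-invariant positive-definite Hermitian form $h$ on $V$; the latter exists because $\pi'$ is unitary, and on each irreducible constituent it is unique up to a positive real scalar. Since $\pi'$ is admissible, $B$ and $h$ each induce $G'$-equivariant isomorphisms of $V$ onto its smooth dual $\tilde V$ (the one coming from $h$ being conjugate-linear), and composing them produces a conjugate-linear $G'$-equivariant bijection $J\colon V\to V$ characterized by $h(w,Jv)=B(v,w)$ for all $v,w\in V$.

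Next I would compute $J^{2}$. It is a linear $G'$-equivariant endomorphism, so --- $\pi'$ being admissible and multiplicity-free --- it commutes with the projections onto the isotypic components and acts by a scalar on each irreducible constituent. Unwinding the defining relation of $J$ and using the $c$-symmetry of $B$ gives $h(w,J^{2}v)=c\,h(Jv,Jw)$; moreover $h_{1}(v,w):=h(Jw,Jv)$ is again a $G'$-invariant positive-definite Hermitian form, hence equals $\lambda h$ on each constituent for some real $\lambda>0$, and comparing the two identities shows that $J^{2}$ acts on that constituent by the real scalar $c\lambda$. Thus $J^{2}$ is a positive scalar on every constituent precisely when $c=1$.

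For the implication $c=1\Rightarrow$ defined over $\Rbb$, I would rescale $J$ on each constituent by $\lambda^{-1/2}$ to obtain a conjugate-linear $G'$-equivariant $J'$ with $(J')^{2}=\id$; then $V^{J'}:=\{v\in V:J'v=v\}$ is a $G'$-stable $\Rbb$-subspace with $V^{J'}\otimes_{\Rbb}\Cbb=V$, via the splitting $v=\tfrac12(v+J'v)+\tfrac12(v-J'v)$ whose second summand lies in $iV^{J'}$, so $\pi'$ is defined over $\Rbb$. For the converse, given a $G'$-stable real form with associated conjugate-linear $G'$-equivariant involution $\sigma$, I would replace $h$ by $h+h^{\sigma}$ with $h^{\sigma}(v,w):=h(\sigma w,\sigma v)$ so that $h(\sigma v,\sigma w)=\overline{h(v,w)}$, and then $B(v,w):=h(v,\sigma w)$ is a $G'$-invariant nondegenerate bilinear form which the compatibility forces to be symmetric: $B(v,w)=h(v,\sigma w)=\overline{h(\sigma v,w)}=h(w,\sigma v)=B(w,v)$. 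Since the parity is well-defined by hypothesis and $B$ is symmetric, $c=1$.

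The main obstacle, and the only point requiring genuine care, is the reducible case. I would need to check that $J$, and its constituent-wise rescaling $J'$, really commute with the isotypic projections (so that rescaling per constituent is legitimate and $J'$ remains conjugate-linear and $G'$-equivariant), and that the scalars $\lambda$ relating $h_{1}$ and $h$ are forced to be positive reals --- this is exactly where multiplicity-freeness together with unitarity is used, through the uniqueness of the invariant positive-definite Hermitian form on an irreducible admissible constituent. The remaining identities ($h(w,J^{2}v)=c\,h(Jv,Jw)$ and the symmetry of $B$) are routine sesquilinear-form manipulations that I would not write out in full.
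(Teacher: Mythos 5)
Your proposal is correct and is essentially the paper's own argument: the paper simply remarks that the proof of Lemma~2.2 of Prasad--Ramakrishnan ``works similarly,'' and that proof is exactly the Frobenius--Schur real-structure construction you carry out, with the same adaptation to the multiplicity-free, length-at-most-two setting. One small caution on the point you flag: $J$ need not commute with the isotypic projections (it may swap the two constituents when they are exchanged by duality), but in that case the relation $J\circ J^{2}=J^{2}\circ J$ forces the two scalars to coincide, so a single global rescaling by $\lambda^{-1/2}$ still yields the required involution.
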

         
        \begin{proof}
        The proof of \cite[Lemma 2.2]{Prasad} works similarly in this case.
        \end{proof} 
        
        \begin{lemma}
          \label{AZdefR}
        The composite of the normalized Jacquet functor and the normalized induction for a parabolic subgroup $P$ of $G$ over $F$ induces an endomorphism of the Grothendieck group of the category of representations of $G'$ of finite length and defined over $\Rbb$.
        \end{lemma}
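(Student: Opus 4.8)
The plan is to verify that each of the normalized Jacquet functor $r_{P'}^{G'}$ and the normalized induction functor $i_{P'}^{G'}$ is exact, preserves finite length, and carries a representation of $G'$ defined over $\Rbb$ to one defined over $\Rbb$. Granting this, the composite $i_{P'}^{G'}\circ r_{P'}^{G'}$ inherits all three properties; being exact it is additive on short exact sequences, so it descends to the Grothendieck group, and since it preserves the subcategory of finite length representations defined over $\Rbb$, it induces an endomorphism of the Grothendieck group of that subcategory.

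For exactness and finiteness I would reduce to the corresponding classical facts for $G$ itself, using Remark \ref{chain} and Mackey theory. Since $\tau$ normalizes $P$ and lies outside $G$, one has $P'G=G'$, so the restriction to $G$ of $i_{P'}^{G'}(\rho)$ is $i_{P}^{G}(\rho|_{M})$ and the restriction to $M$ of $r_{P'}^{G'}(\pi')$ is $r_{P}^{G}(\pi'|_{G})$. A short exact sequence of $G'$-representations is exact after restriction to the index-two subgroup $G$, and a $G'$-representation (resp.\ $M'$-representation) has finite length exactly when its restriction to $G$ (resp.\ $M$) does; hence exactness and preservation of finite length for $i_{P'}^{G'}$ and $r_{P'}^{G'}$ follow from the well-known exactness and finiteness properties of normalized parabolic induction and of the Jacquet functor on the reductive $p$-adic group $G$.

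It remains to see that both functors commute with extension of scalars from $\Rbb$ to $\Cbb$. The unnormalized Jacquet functor is the functor of coinvariants for the unipotent radical $N\subseteq G$ of $P$, and coinvariants commute with $-\otimes_{\Rbb}\Cbb$; thus if $\pi'$ has real form $V_{\Rbb}$ then $(V_{\Rbb})_{N}$ is a real form of the Jacquet module, carrying the residual action of $M'$ (note that $\tau$ normalizes both $M$ and $N$). Unnormalized smooth induction likewise commutes with $-\otimes_{\Rbb}\Cbb$: decomposing an induced vector into real and imaginary parts, each part separately satisfies the equivariance condition because the inducing datum is real. Finally the normalizing characters $\delta_{P'}^{\pm 1/2}$ take values in $\Rbb_{>0}$, since $G'$ is unimodular and $\Int(\tau)$ preserves the Haar measures of $P$ and $N$, so $\delta_{P'}$ restricts to the positive real valued character $\delta_{P}$ on $P$ and is trivial on $\tau$. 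Twisting by such a character preserves the property of being defined over $\Rbb$, so $i_{P'}^{G'}$ and $r_{P'}^{G'}$ preserve this property, which completes the argument.

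The only genuinely delicate point is the bookkeeping forced by the non-connectedness of $G'$ and its ``parabolics'' $P'$ --- checking $P'G=G'$, that the modulus character $\delta_{P'}$ is real valued and trivial on $\tau$, and that exactness and finiteness can be transported from $G$ --- rather than any substantial new input; everything else is formal.
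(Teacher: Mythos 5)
Your argument is correct and, despite the different packaging, it ultimately rests on the same classical inputs as the paper's proof, namely exactness and preservation of finite length for $i_P^G$ and $r_P^G$ on the reductive group $G$ itself. The difference is the direction of the reduction. The paper treats the lemma as a statement about finiteness of length only: by Clifford theory and Frobenius reciprocity every irreducible representation of $G'$ occurs in some $\mathrm{ind}_G^{G'}(\rho)$ with $\rho$ irreducible on $G$, and the isomorphism $i_{P'}^{G'}(r_{P'}^{G'}(\mathrm{ind}_G^{G'}(\rho)))\cong\mathrm{ind}_G^{G'}(i_P^{G}(r_P^{G}(\rho)))$ of Remark \ref{chain} then transports finiteness from $G$ up to $G'$. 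You instead go down: using $P'G=G'$ and Mackey theory you identify the restrictions to $G$ of the induced and Jacquet modules with the corresponding objects for $G$, and you observe that exactness and finite length are detected on an index-two subgroup. The two reductions are adjoint to one another and equally valid. What your write-up adds is an explicit verification that both functors preserve the property of being defined over $\Rbb$ --- coinvariants and smooth induction commute with $-\otimes_{\Rbb}\Cbb$, and the normalizing character $\delta_{P'}^{1/2}$ is positive real valued and trivial on $\tau$ because $\Int(\tau)$ preserves the relevant Haar measures --- whereas the paper's proof declares at the outset that only finiteness of length needs checking and leaves the real-structure point implicit. Since that point is exactly what makes the lemma usable in the proof of Proposition \ref{AZ}, your version is the more complete of the two.
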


        \begin{proof}
          It suffices to show that the normalized Jacquet functor and the normalized induction preserve the finiteness of length.
          Considering the Clifford theory (for example, see \cite[2.9 Lemma]{BZGLn}) and the Frobenius reciprocity, it suffices to show the finiteness for representations of the form $\mathrm{ind}_G^{G'}(\rho)$, where $\rho$ is an irreducible representation of $G$.
          In this case, we have $i_{P'}^{G'}(r_{P'}^{G'}(\mathrm{ind}_G^{G'}(\rho))) \cong \mathrm{ind}_G^{G'}(i_P^{G}(r_P^{G}(\rho)))$.
          Hence, it follows from \cite[VI.6.2, VI.6.4]{Renardp-adique} and the Clifford theory above.
        \end{proof}
        
        \begin{lemma}
          \label{ssdefR}

          Let $G$ be a totally disconnected group.
          A semisimple representation $\pi$ of finite length of $G$ is defined over $\Rbb$ if its image in the Grothendieck group of representations of $G$ of finite length lies in the subgroup generated by representations of $G$ of finite length and defined over $\Rbb$. 
         \end{lemma}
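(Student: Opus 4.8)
The plan is to work in the Grothendieck group $R(G)$ of finite-length representations of $G$, which is free abelian on the classes $[\pi_i]$ of the irreducible representations, and to identify explicitly the subgroup $\Lambda\subseteq R(G)$ generated by the classes of representations defined over $\Rbb$. Recall first that a finite-length complex representation $(\rho,V)$ of $G$ is defined over $\Rbb$ precisely when $V$ carries a conjugate-linear $G$-equivariant involution (Galois descent along $\Cbb/\Rbb$: the fixed space of the involution is a real form). For an irreducible $\pi$ one has, via Schur's lemma, the usual trichotomy: either $\pi\not\cong\bar\pi$ (\emph{complex type}); or $\pi\cong\bar\pi$ and $\pi$ has a real form (\emph{real type}); or $\pi\cong\bar\pi$ with no real form, in which case any conjugate-linear $G$-isomorphism $\pi\xrightarrow{\sim}\bar\pi$ may be rescaled to a $G$-map $J$ with $J^2=-1$ (\emph{quaternionic type}). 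Using the tautological conjugate-linear swap on $\pi\oplus\bar\pi$ when $\pi$ is of complex type, and $(v,w)\mapsto(Jw,-Jv)$ on $\pi\oplus\pi$ when $\pi$ is of quaternionic type, one checks that each of $[\pi_i]$ ($\pi_i$ of real type), $[\pi_i]+[\bar\pi_i]$ ($\pi_i$ of complex type) and $2[\pi_i]$ ($\pi_i$ of quaternionic type) is the class of a representation defined over $\Rbb$; let $\Lambda$ denote the subgroup of $R(G)$ they generate.

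First I would prove the converse: the class of \emph{any} representation $\rho$ defined over $\Rbb$ lies in $\Lambda$. Fixing a conjugate-linear $G$-involution $\sigma$ on the underlying space $V$, I would induct on the length of $\rho$. Choose an irreducible subrepresentation $\pi_1\subseteq V$; then $\sigma(\pi_1)$ is an irreducible subrepresentation isomorphic to $\bar\pi_1$. By irreducibility, either $\sigma(\pi_1)=\pi_1$, in which case $\sigma$ restricts to a conjugate-linear involution exhibiting $\pi_1$ as of real type and $[\pi_1]\in\Lambda$; or $\pi_1\cap\sigma(\pi_1)=0$, in which case $U=\pi_1\oplus\sigma(\pi_1)$ is $\sigma$-stable with $[U]=[\pi_1]+[\bar\pi_1]$, and a glance at the three types shows $[U]\in\Lambda$ in every case. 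In either case $U$ is $\sigma$-stable, $V/U$ inherits a conjugate-linear $G$-involution, and $[\rho]=[U]+[V/U]\in\Lambda$ by the inductive hypothesis. Hence the subgroup occurring in the hypothesis of the lemma is exactly $\Lambda$.

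It then remains to deduce the statement. Let $\pi$ be semisimple of finite length with $[\pi]\in\Lambda$ and write $\pi\cong\bigoplus_i\pi_i^{\oplus m_i}$ over the distinct irreducible constituents, with $m_i\ge 1$. In the basis $\{[\pi_i]\}$ of $R(G)$, the subgroup $\Lambda$ consists of the vectors that are unconstrained on the real-type coordinates, equal on the two coordinates of each complex-conjugate pair, and even on the quaternionic-type coordinates; so $[\pi]\in\Lambda$ forces $m_i=m_{\bar i}$ whenever $\pi_i$ is of complex type, and $m_i$ even whenever $\pi_i$ is of quaternionic type. Regrouping $\pi$ as the direct sum of the real-type pieces $\pi_i^{\oplus m_i}$, the pieces $(\pi_i\oplus\bar\pi_i)^{\oplus m_i}$ coming from complex-conjugate pairs, and the pieces $(\pi_i^{\oplus 2})^{\oplus (m_i/2)}$ coming from quaternionic $\pi_i$, each summand is defined over $\Rbb$ by the first paragraph; since a finite direct sum of representations defined over $\Rbb$ is again defined over $\Rbb$ (take the direct sum of real forms), $\pi$ is defined over $\Rbb$.

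The only substantial point, and the one I expect to require the most care, is the identification of the ``defined-over-$\Rbb$'' subgroup with $\Lambda$ --- that is, controlling the way in which the irreducible constituents of a (possibly non-semisimple) representation defined over $\Rbb$ can fail to be individually defined over $\Rbb$. This is exactly what the inductive peeling-off argument with the conjugate-linear involution handles; Schur's lemma enters only in producing the endomorphism $J$ with $J^2=-1$ for a quaternionic-type irreducible, and everything else is bookkeeping in the free abelian group $R(G)$.
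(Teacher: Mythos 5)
Your proof is correct. It runs on the ``complex side'' of an argument the paper carries out on the ``real side'': the paper writes each generator $\rho$ of the subgroup as $\rho_0\otimes_{\Rbb}\Cbb$ for a real form $\rho_0$, reduces by Jordan--H\"older to the case where $\rho_0$ is irreducible over $\Rbb$, shows each such complexification is semisimple (either irreducible or $\rho_1\oplus\bar{\rho_1}$) and that distinct $\rho_0$'s give complexifications with pairwise disjoint sets of irreducible constituents (an $\Rbb$-isotypicity argument), and then reads off nonnegativity of the coefficients $m_\rho$ directly from the semisimplicity of $\pi$ --- it never needs an explicit description of the subgroup. You instead compute that subgroup $\Lambda$ exactly, via the real/complex/quaternionic trichotomy and the inductive peeling argument with the conjugate-linear involution $\sigma$; that peeling step is the counterpart of the paper's passage to a composition series of $\rho_0$, and your three types correspond to the three possibilities for $\rho_0\otimes_{\Rbb}\Cbb$. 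Your route yields slightly more (the explicit lattice of classes of representations defined over $\Rbb$, i.e.\ the parity and conjugation-symmetry constraints on multiplicities) at the cost of length, whereas the paper's disjoint-support trick is the more economical way to extract positivity of the multiplicities. Both arguments rest on the same implicit appeal to Schur's lemma for irreducible smooth representations (needed for the trichotomy on your side, and for the length-two claim on the paper's side), which is harmless for the groups to which the lemma is actually applied.
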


         \begin{proof}
           In this situation, $\pi$ can be written as
           $$ \pi = \sum_{\rho} m_{\rho} \rho $$
           in the Grothendieck group, where $\rho$ runs through a finite set of representations of finite length and defined over $\Rbb$ and each $m_{\rho}$ is an integer.
           We also have $\rho = \rho_{0} \otimes_{\Rbb} \Cbb$, where $\rho_{0}$ is a real form of $\rho$.
           By decomposing $\rho_{0}$ into irreducible representations of $G$ on $\Rbb$-vector spaces, we assume that each $\rho_{0}$ is irreducible and mutually non-isomorphic.
           Note that if we take different  $\rho_0$ and $\rho'_0$, the sets of irreducible subquotients of $\rho$ and $\rho'$ are disjoint, since each subquotient of $\rho$ (resp. $\rho'$) is $\rho_0$-isotypic (resp. $\rho'_0$-isotypic) as a representation of $G$ over an $\Rbb$-vector space.
           So, if we can prove that each $\rho$ is semisimple, then our lemma follows. 
           We prove this.
           We may assume that $\rho$ is not irreducible.
           Then, the length of $\rho$ is equal to 2 and we have $\rho = \rho_{1} \oplus \rho_{2}$ where $\rho_{1}$, $\rho_{2}$ are some irreducible representations of $G$.
           In this case, we have an isomorphism $\rho_{0} \cong \rho_{1}$ by considering the composition series of $\rho$ as a representation over an $\Rbb$-vector space.
           So, we have 
           $$\rho \cong \rho_{1} \otimes_{\Rbb} \Cbb \cong \rho_{1} \otimes_{\Cbb} (\Cbb \otimes_{\Rbb} \Cbb) \cong \rho_{1} \oplus \overline{\rho_{1}}. $$
           Hence the lemma.
          \end{proof}

        \begin{proof}[Proof of the Proposition \ref{AZ}]
        The map induced from the Aubert-Zelevinsky involution is an automorphism of order two on the set of the equivalence classes of irreducible representations of $G$.
        Hence, it suffices to show that it sends conjugate orthogonal representations to conjugate orthogonal ones.
        
        Let $\pi$ be a unitary conjugate orthogonal representation of $G$. Then, $\pi' = \ind^{G'}_G(\pi)$ is a unitary orthogonal representation of $G'$. 
        Hence, by Lemmas \ref{AZind} and \ref{AZorth}, $\pi'$ is defined over $\Rbb$. 
        Applying Lemmas \ref{AZind} and \ref{AZdefR} to $\pi'$, we conclude that $\mathrm{ind}_G^{G'}(|D_G(\pi)|)$ is a multiplicity-free unitary representation and belongs to the Grothendieck group of the category of representations of finite length and defined over $\Rbb$. 
        Then, Lemmas \ref{ssdefR} and \ref{AZorth} show that $\mathrm{ind}_G^{G'}(|D_G(\pi)|)$ is defined over $\Rbb$ and orthogonal.
        Hence, $|D_G(\pi)|$ is conjugate orthogonal by Lemma 4.10.
        \end{proof}

   \subsection{An auxiliary lemma}
      
         When $G$ is a group, let $G^{\op}$ denote its opposite group.
         We use the following trick.
         
         \begin{lemma}
         \label{trick}
         
         Let $\pi$ be a conjugate self-dual representation with respect to $(G, G')$. Then $\pi^{op}$ defined by $\pi^{op}(g) = \pi(g^{-1})$ is conjugate self-dual with respect to $(G^{\op}, G'^{\op})$, and has the same parity as $\pi$.
         
         \end{lemma}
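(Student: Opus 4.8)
The plan is a bookkeeping computation that tracks how the reversed multiplication of the opposite group interacts with the inner automorphism used to define the conjugate action.

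First I would record the elementary structure. Since $G^{\op}$ and $G'^{\op}$ have the same underlying topological spaces as $G$ and $G'$, the pair $(G^{\op},G'^{\op})$ is again a conjugating pair and $G'^{\op}\setminus G^{\op}=G'\setminus G$ as sets; moreover $g\mapsto g^{-1}$ is a topological anti-automorphism, so $\pi^{\op}$ is a smooth irreducible representation of $G^{\op}$ on the same space $V$ (its invariant subspaces are literally those of $\pi$). Fix once and for all $\tau\in G'\setminus G$. A direct computation in $G'^{\op}$ shows that the inner automorphism of $G'^{\op}$ induced by $\tau$ sends $g$ to $\tau^{-1}g\tau$, the product being taken in $G'$; consequently, writing $(\pi^{\op})^{\tau}$ for the twist of $\pi^{\op}$ by this automorphism,
$$(\pi^{\op})^{\tau}(g)=\pi^{\op}(\tau^{-1}g\tau)=\pi(\tau^{-1}g^{-1}\tau).$$

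Next I would transport the invariant pairing. Let $\langle\cdot,\cdot\rangle$ be a non-degenerate pairing on $V$ with $\langle\pi(\tau g\tau^{-1})x,\pi(g)y\rangle=\langle x,y\rangle$ for all $g\in G$. Substituting $g\mapsto\tau^{-1}g\tau$ (which again lies in $G$) gives $\langle\pi(g)x,\pi(\tau^{-1}g\tau)y\rangle=\langle x,y\rangle$. Now set $\langle x,y\rangle^{\op}:=\langle y,x\rangle$. Applying the previous identity with the variables interchanged and with $g$ replaced by $g^{-1}$ gives
$$\langle(\pi^{\op})^{\tau}(g)x,\pi^{\op}(g)y\rangle^{\op}=\langle\pi(g^{-1})y,\pi(\tau^{-1}g^{-1}\tau)x\rangle=\langle y,x\rangle=\langle x,y\rangle^{\op},$$
so $\langle\cdot,\cdot\rangle^{\op}$ is an invariant non-degenerate pairing for $\pi^{\op}$, and hence $\pi^{\op}$ is conjugate self-dual with respect to $(G^{\op},G'^{\op})$.

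Finally I would compare parities. From $\langle\pi(\tau^{2})y,x\rangle=C(G,G',\pi)\langle x,y\rangle$, replacing $y$ by $\pi(\tau^{-2})y$ (note $\tau^{2}\in G$) and using $C(G,G',\pi)=\pm1$ yields $\langle x,\pi(\tau^{-2})y\rangle=C(G,G',\pi)\langle y,x\rangle$. On the other hand, the square of $\tau$ computed in $G'^{\op}$ is still $\tau^{2}$, and $\pi^{\op}(\tau^{2})=\pi(\tau^{-2})$, so by the definition of the parity of $\pi^{\op}$,
$$\langle x,\pi(\tau^{-2})y\rangle=\langle\pi^{\op}(\tau^{2})y,x\rangle^{\op}=C(G^{\op},G'^{\op},\pi^{\op})\langle x,y\rangle^{\op}=C(G^{\op},G'^{\op},\pi^{\op})\langle y,x\rangle.$$
Since $\langle\cdot,\cdot\rangle$ is non-degenerate, comparing the two displays forces $C(G^{\op},G'^{\op},\pi^{\op})=C(G,G',\pi)$, which is the claim. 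The one genuinely error-prone point is getting the identity $(\pi^{\op})^{\tau}(g)=\pi(\tau^{-1}g^{-1}\tau)$ right, since it combines the reversed multiplication, the inversion map, and the inner automorphism; once that is pinned down, everything else is substitution. In the archimedean case one argues verbatim, replacing Schur's lemma by Dixmier's lemma as in Remark \ref{realcase}.
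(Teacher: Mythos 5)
Your computation is correct and is exactly the direct verification from the definitions that the paper compresses into the one-line remark ``it easily follows from the definition of parity'': the key identities $(\pi^{\op})^{\tau}(g)=\pi(\tau^{-1}g^{-1}\tau)$ and $\pi^{\op}(\tau^{2})=\pi(\tau^{-2})$ are right, the transposed pairing $\langle x,y\rangle^{\op}=\langle y,x\rangle$ is indeed invariant, and the sign comparison using $C=C^{-1}$ is sound. Nothing further is needed.
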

          
         \begin{proof}
          It easily follows from the definition of parity.
          \end{proof}

\section{Proof of the main theorem}

        In this section, we finally prove the main theorem.  
        We first prove the following product formula for the parity of automorphic conjugate self-dual representations.
        We use the notation of Remark \ref{globalaux} which is a generalization of Construction \ref{global}.

    \subsection{A product formula for parity} 
         
        \begin{proposition}
        \label{prodform}
        Let $L/K$ be a quadratic extension of number fields, and let $(\mathbf{A}, \mathbf{B})$ be as in Construction \ref{globalaux} and take the associated conjugating pair $(G, G')$. 
        Let $\Pi = \bigotimes_{w} ' \Pi_{w}$ be an automorphic conjugate self-dual representation of $\mathbf{A}^{\times}(\mathbb{A}_L)$ which appears in the discrete spectrum with a unitary central character $\omega$. 
        Let also $S$ be the set of places at which $\mathbf{B}$ does not split and put $\Pi_{v} = \bigotimes_{w|v} \Pi_w$ for each place $v$ of $K$. 
        Then we have
        $$ \prod_{v \in S} c(G_v, G'_v, \Pi_{v}) = 1 .$$
         
        \end{proposition}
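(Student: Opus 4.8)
The plan is to deduce the product formula from the global Jacquet--Langlands correspondence of Theorem \ref{BJL} applied to the central simple algebra $\mathbf{B}$ over $K$, together with the fact (Lemma \ref{split}) that parity is trivial at split places. First I would transfer $\Pi$ along the global Jacquet--Langlands correspondence. Applying the global base change of Theorem \ref{globalizerep} in reverse is not quite what is needed; instead, the right move is to first pass from $\mathbf{A}$ to a split form. By Theorem \ref{BJL}(i), applied to the algebra $M_n(L)$ together with $\mathbf{A}$ (or rather, running the correspondence between $\mathbf{A}^\times(\mathbb{A}_L)$ and $\GL_n(\mathbb{A}_L)$), we obtain a discrete automorphic representation $\Pi^{\GL}$ of $\GL_n(\mathbb{A}_L)$ with $\Pi^{\GL}_w = \Pi_w$ at split places of $\mathbf{A}$ and $\Pi^{\GL}_w$ related to $\Pi_w$ by the Jacquet--Langlands map (up to the Aubert--Zelevinsky involution) at the ramified places. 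Since $\Pi$ is conjugate self-dual and the correspondence commutes with the Galois action on $L$ (via Construction \ref{locext} and Lemma \ref{galconj}) and with contragredients, $\Pi^{\GL}$ is again conjugate self-dual.

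Next I would exploit the key structural fact built into Construction \ref{globalaux}: the algebra $\mathbf{B}$ over $K$ satisfies $\mathbf{B}\otimes_K L\cong M_2(\mathbf{A})$, and the conjugating pair $(G,G')$ sits inside $\mathbf{B}^\times$. The essential point is that $\mathbf{B}^\times(\mathbb{A}_K)$, viewed as an overgroup of $G=\mathbf{A}^\times(\mathbb{A}_L)$, contains a global element $\tau\in\mathbf{B}^\times(K)$ realizing the conjugating involution. The induced representation $\ind_G^{G'}\Pi$ (in the sense of Lemma \ref{AZind}, applied adelically) is then a self-dual automorphic representation of $G'$, whose global parity — computed from the invariant bilinear pairing on the automorphic realization of $\Pi$, which is \emph{canonical} since it comes from the pairing of $\Pi$ with $\Pi^\vee\cong\Pi^\tau$ inside the discrete spectrum — is trivially $+1$: there is nothing to obstruct it globally because the space of automorphic forms carries an honest $G'$-action and the pairing is given by integration, hence real and symmetric. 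By Lemma \ref{multi} (multiplicativity of parity under factorization into local components), this global parity $+1$ equals $\prod_v c(G_v,G'_v,\Pi_v)$, the product running over all places $v$ of $K$. Finally, by Lemma \ref{split}, $c(G_v,G'_v,\Pi_v)=1$ for every place $v$ where $\mathbf{B}$ splits (the split non-archimedean case, the archimedean case via the totally real/totally imaginary hypothesis, and the split-étale case), so the product collapses to $\prod_{v\in S}c(G_v,G'_v,\Pi_v)=1$.

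I expect the main obstacle to be making rigorous the claim that the \emph{global} parity of the automorphic representation $\ind_G^{G'}\Pi$ is $+1$, i.e. that the naturally defined adelic invariant pairing is conjugate orthogonal. Locally this is the content of Lemmas \ref{AZorth} and \ref{AZind}; globally one must check that the global pairing, obtained from the pairing between $\Pi$ and $\Pi^\vee$ realized in the discrete spectrum (using that $\Pi$ is conjugate self-dual so $\Pi^\tau\cong\Pi^\vee$ and multiplicity one holds by Theorem \ref{BJL}(ii)), decomposes as the restricted tensor product of the local invariant pairings up to a positive scalar. This is where one uses that the automorphic pairing is essentially the Petersson-type integral, which is visibly positive-definite-compatible, so that no sign can be hidden. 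Once this is set up, the rest is bookkeeping: matching the set $S$ with the non-split places of $\mathbf{B}$, invoking Lemma \ref{split} at the complementary places, and applying multiplicativity.

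Alternatively, and perhaps more cleanly, one could avoid the induced representation altogether and argue directly: realize $\Pi$ in the discrete automorphic spectrum, use that conjugate self-duality produces a $\tau$-invariant (up to the canonical identification) global bilinear form by multiplicity one, and observe that $\langle\Pi(\tau^2)y,x\rangle/\langle x,y\rangle$, being computed from a global object, must be a product of the local signs $c(G_v,G'_v,\Pi_v)$ and must equal the global sign $+1$ attached to the canonical pairing; this is how Prasad--Ramakrishnan argue for their Theorem C. I would follow whichever of these two routes integrates more smoothly with the conventions already fixed in Sections 2 and 3, most likely the direct one.
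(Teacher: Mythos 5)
Your ``direct'' alternative in the final paragraph is exactly the paper's proof: one takes a $K$-rational $\tau\in G'\setminus G$, uses multiplicity one (Theorem \ref{BJL}) to realize the invariant pairing as $\langle f_1,f_2\rangle=\int f_1(\Int(\tau)(g))f_2(g)\,dg$ over $Z_{\mathbf{A}^{\times}}(\Abb_L)\mathbf{A}^{\times}(L)\backslash\mathbf{A}^{\times}(\Abb_L)$ (non-degenerate since $\overline{f}\circ\Int(\tau)\in\Pi$), and a change of variables together with left-invariance under the rational point $\tau^2$ gives global parity $+1$, after which Lemmas \ref{unramified}, \ref{split} and \ref{multi} collapse the product to $S$. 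The preliminary Jacquet--Langlands transfer to $\GL_n(\Abb_L)$ in your first paragraph is an unnecessary detour --- the argument runs directly on $\mathbf{A}^{\times}(\Abb_L)$.
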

        
        \begin{proof}
        If we can show that $\Pi$ is conjugate orthogonal for the pair $(G, G')$, then this proposition follows from Lemmas \ref{unramified}, \ref{split}, and \ref{multi}. 
        The desired orthogonality results from the same argument as \cite[Section 3]{Prasad} using the multiplicity one theorem (Theorem \ref{BJL}). We give the detail.
        
        First, we take a $K$-valued point $\tau$ in $G' \setminus G$.
        Since $\Pi$ appears in the discrete spectrum with multiplicity one, we can consider $\Pi$ as a space of functions on $\mathbf{A}^{\times}(L)\backslash \mathbf{A}^{\times}(\Abb_L)$. This space has an invariant bilinear form, namely,
       
        $$ \langle f_1, f_2 \rangle 
        = \int_{Z_{\mathbf{A}^{\times}} (\Abb_L)\mathbf{A}^{\times}(L) \backslash \mathbf{A}^{\times}(\Abb_L)}f_1(\Int(\tau)(g))f_2(g)dg.$$

        We claim that it is non-degenerate.
        Because $\Pi$ is conjugate self-dual, unitary and also the multiplicity one theorem for $\Pi$ (Theorem \ref{BJL}) holds, we have $\overline{f}\circ \Int(\tau) \in \Pi$ for every element $f$ in $\Pi$.
        Hence the claim.
        Furthermore, it is an invariant form.
        By the direct computation below, we can prove that this form is also conjugate orthogonal, hence the theorem.
        
        \begin{align*}
        \langle \Pi(\tau^2)f_2, f_1 \rangle 
        &= \int_{Z_{\mathbf{A}^{\times}} (\Abb_L)\mathbf{A}^{\times}(L) \backslash \mathbf{A}^{\times}(\Abb_L)}f_2(\tau g\tau)f_1(g)dg \\
        &= \int_{Z_{\mathbf{A}^{\times}} (\Abb_L)\mathbf{A}^{\times}(L) \backslash \mathbf{A}^{\times}(\Abb_L)}f_2(\tau^2g)f_1(\Int(\tau)g)dg  \\
        &= \int_{Z_{\mathbf{A}^{\times}} (\Abb_L)\mathbf{A}^{\times}(L) \backslash \mathbf{A}^{\times}(\Abb_L)}f_2(g)f_1(\Int(\tau)g)dg  \\
        &= \langle f_1, f_2 \rangle 
        \end{align*}
        (First, we change the variable, then we use the left invariance of $f_2$ for rational points secondly.)
        \end{proof}

	\subsection{The proof of the main theorem}
		
		We use the notation of Constructions \ref{locext}, \ref{global}.

		\begin{theorem}
		Let $E/F$ be a $p$-adic quadratic extension and $A=\GL_m(D)$ be a central simple algebra over $E$ of rank $n $ whose Hasse invariant is $s/d$. We take the pair $(s, d)$ in the way that $d > 1$, $0 < s < d$ and $\gcd(d, s)=1$, or $d=1$ and $s=0$.
		Let $(G, G')$ be the conjugating pair associated with it in Construction \ref{global}. Let $\pi$ be a conjugate self-dual discrete series representation of $G$ = $A^{\times}$ and let $\sigma$ denote its Langlands parameter. 
		Then we have
		$$ c(G, G',\pi) = (-1)^{(n-1)ms} c(W_E, W_F, \sigma)^{ms} .$$
		\end{theorem}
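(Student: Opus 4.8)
When $d=1$ the algebra $A$ is split and $s=0$, so the asserted identity reads $c(G,G',\pi)=1$, i.e.\ every conjugate self-dual representation of $G=\GL_n(E)$ is conjugate orthogonal; this is Lemma \ref{split}(i). So assume $d>1$. The plan is then to run the globalizing argument of Prasad--Ramakrishnan with quasi-split unitary groups in place of their orthogonal and symplectic groups. Write $\epsilon=c(W_E,W_F,\sigma)$ and let $\delta$ be the discrete series representation of $\GL_n(E)$ attached to $\pi$ by the local Jacquet--Langlands correspondence; since that correspondence is compatible with the Galois action and with contragredients, $\delta$ is conjugate self-dual with Langlands parameter $\sigma$, of parity $\epsilon$. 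First I would globalize the base field: by Lemma \ref{globalizefld} choose a quadratic extension $L/K$ of number fields with $K$ totally real, $L$ totally imaginary, and a non-split finite place $v$ of $K$ with $(L_v,K_v)=(E,F)$. Then I would pick \emph{exactly} $ms$ further finite places $v_1,\dots,v_{ms}$ of $K$, all distinct from $v$, non-split and unramified in $L$; at each $v_i$ let $A_i$ be the division algebra over $L_{v_i}$ of Hasse invariant $-1/n$ (so that the local invariants $s/d$ at $v$ and $-1/n$ at the $v_i$ sum to zero), and pick a conjugate self-dual simple supercuspidal representation $\rho_i$ of $A_i^\times$ whose Langlands parameter $\sigma_i$ has parity $\epsilon$. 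This last choice is available: passing via Lemma \ref{trick} to the opposite algebra $A_i^{\op}$, of Hasse invariant $1/n$, Theorem \ref{mie}(ii) furnishes simple supercuspidal representations of both parities in the unramified case, while Theorem \ref{mie}(i) — Mieda's identity $c=(-1)^{n-1}c(\text{parameter})$ — turns a prescription of the parity of $\rho_i$ into a prescription of the parity of $\sigma_i$.

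With these data fixed, Construction \ref{global} globalizes $(A,\{A_i\})$ to a central division algebra $\mathbf{D}$ over $L$, split outside $\{v,v_1,\dots,v_{ms}\}$, and $(B,\{B_i\})$ to $\mathbf{E}$ over $K$ with $\mathbf{E}\otimes_K L\cong M_2(\mathbf{D})$, producing the global conjugating pair $(G,G')$ with $G=\mathbf{D}^\times(\Abb_L)$ whose localization at $v$ (resp.\ $v_i$) is the local conjugating pair of Construction \ref{locext} attached to $A$ (resp.\ $A_i$). Applying Theorem \ref{globalizerep} to the $\GL_n$-representations $\delta,\delta_1,\dots,\delta_{ms}$ over $L_v,L_{v_1},\dots,L_{v_{ms}}$, whose parameters all share the parity $\epsilon$, produces a conjugate self-dual cuspidal automorphic representation $\Pi^{\GL}$ of $\GL_n(\Abb_L)$ with $\Pi^{\GL}_v\cong\delta$ and $\Pi^{\GL}_{v_i}\cong\delta_i=\mathrm{JL}(\rho_i)$; here $\delta$ and each $\delta_i$ are (generalized) discrete series, as required for the next step. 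Feeding $\Pi^{\GL}$ into Theorem \ref{BJL} for $\mathbf{D}$ over $L$ gives a discrete automorphic representation $\Pi'$ of $\mathbf{D}^\times(\Abb_L)$ that agrees with $\Pi^{\GL}$ at every split place of $\mathbf{D}$ and such that at each ramified place $\Pi'$ or its Aubert--Zelevinsky twist has the same Langlands parameter as $\Pi^{\GL}$. Since $\Pi^{\GL}$ is conjugate self-dual, $(\Pi')^\tau$ and $(\Pi')^\vee$ agree at almost all places, so strong multiplicity one for $\mathbf{D}^\times$ (Theorem \ref{BJL}(ii)) forces $\Pi'$ to be conjugate self-dual for $(G,G')$; it lies in the discrete spectrum with unitary central character (conjugate self-duality already makes the relevant central characters unitary), so Proposition \ref{prodform} applies and yields $\prod_{v'\in S}c(G_{v'},G'_{v'},\Pi'_{v'})=1$, where $S=\{v,v_1,\dots,v_{ms}\}$.

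It remains to evaluate the factors. At $v_i$ the algebra $\mathbf{D}_{v_i}=A_i$ is a division algebra, so $A_i^\times$ has no proper parabolic subgroup, the Aubert--Zelevinsky involution is trivial, and $\Pi'_{v_i}$ is the representation of $A_i^\times$ with parameter $\sigma_i$, namely $\rho_i$; hence, using Theorem \ref{mie}(i) together with Lemma \ref{trick} (the passage to $A_i^{\op}$ replaces $\sigma_i$ by its dual, which has the same parity), one gets $c(G_{v_i},G'_{v_i},\Pi'_{v_i})=(-1)^{n-1}\epsilon$. At $v$, $\Pi'_v$ equals $\pi$ or $|D_{A^\times}(\pi)|$, and these have the same parity by Proposition \ref{AZ}, so $c(G_v,G'_v,\Pi'_v)=c(G,G',\pi)$. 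Substituting into the product formula gives
$$c(G,G',\pi)\cdot\bigl((-1)^{n-1}\epsilon\bigr)^{ms}=1,$$
equivalently $c(G,G',\pi)=(-1)^{(n-1)ms}\epsilon^{ms}=(-1)^{(n-1)ms}c(W_E,W_F,\sigma)^{ms}$.

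I expect two places to require care. The first is the bookkeeping: arranging the field, division-algebra, unitary-group and general-linear globalizations compatibly so that $\Pi'$ localizes at $v$ to $\pi$ up to the Aubert--Zelevinsky involution, while checking throughout that the localizations of the global conjugating pair of Construction \ref{global} coincide with the local pairs of Construction \ref{locext} and keeping track of which pair each parity is computed against. The second, and the genuine crux, is the local parity computation at the auxiliary places: the Hasse invariant $-1/n$ dictated by Construction \ref{global} must be matched against Mieda's invariant-$1/n$ theorem through the opposite-algebra trick of Lemma \ref{trick}, and in doing so one must verify that dualizing a conjugate self-dual parameter leaves its parity unchanged.
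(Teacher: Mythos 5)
Your proposal is correct and follows essentially the same globalizing argument as the paper: globalize $E/F$, choose $ms$ auxiliary inert unramified places with division algebras of invariant $-1/n$ carrying conjugate self-dual simple supercuspidals of prescribed parameter parity, globalize via Theorem \ref{globalizerep} and transfer by Theorem \ref{BJL}, then combine Proposition \ref{prodform}, Proposition \ref{AZ}, Theorem \ref{mie} and Lemma \ref{trick}. Your explicit treatment of the split case $d=1$ and your remark that the Aubert--Zelevinsky involution is trivial at the auxiliary division-algebra places are minor clarifications of details the paper leaves implicit.
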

		
		\begin{proof}
	
		We first apply (i) of Lemma \ref{globalizefld} to globalize $E/F$ to a quadratic extension of number fields $L/K$.
		Then we take $ms$ finite places $v_1, \ldots, v_{ms}$ which are inert in $L/K$, and construct $\mathbf{D}$ and $\mathbf{E}$ as Construction \ref{global}.
		
		We then take conjugate self-dual simple supercuspidal representations $\pi_{i}$ of $\mathbf{D}_{v_i}^{\times}$ for $i = 1, \ldots, ms$, whose Langlands parameters have the same parity as $\sigma$ (see Theorem \ref{mie} (ii)). 
		Applying Theorem \ref{globalizerep} to the images of $\pi, \pi_{1}, \ldots, \pi_{ms}$ by the local Jacquet-Langlands correspondence, we get a conjugate self-dual cuspidal automorphic representation $\Pi'$ of $\GL_n(\Abb_L)$. 
		Lastly, we use the Badulescu-Jacquet-Langlands correspondence (Theorem \ref{BJL}) to obtain the transfer $\Pi$ of $\Pi'$.
		Note that the strong multiplicity one theorem for $\mathbf{A}^{\times}(\mathbb{A}_L)$ is valid (Theorem \ref{BJL}), and the Badulescu-Jacquet-Langlands correspondence is identity at split places, so $\Pi$ is conjugate self-dual.
    By applying the product formula above and Proposition \ref{AZ}, we have the equality
    $$ c(G, G',\pi) = \prod_{i=1}^{ms} c(G_{v_i}, G'_{v_i},\pi_{i}) .$$
    Furthermore, we apply Theorem \ref{mie}, Proposition \ref{AZ} and Lemma \ref{trick} to obtain 
    $$ c(G_{v_i}, G'_{v_i},\pi_{i}) = (-1)^{n-1} c(W_E, W_F, \sigma) .$$
    Substituting this to the first equality, we have 
    $$ c(G, G',\pi) = ((-1)^{n-1} c(W_E, W_F, \sigma))^{ms} .$$ 
    This is the very statement of our main theorem.
		\end{proof}
		
		\begin{remark}
		The same argument works if we replace $A'$ with the central division algebra $A''$ with Hasse invariant $(s+d)/2d$. We write $(G, G'')$ for the associated pair in this case.
    Then, we obtain the equality
		 \begin{align*}
		 c(G, G'',\pi)
		 &=(-1)^{(n-1)m(s+d)} c(W_E, W_F, \sigma)^{m(s+d)} \\
		 &=c(W_E, W_F, \sigma)^n c(G, G',\pi).
         \end{align*}
		
		By the inflation-restriction sequence for $\Gbb_m$, there are only these two possibilities for $A'$ which satisfy the hypothesis of Lemma \ref{keylemma1}. The occurrence of this parity change comes from the central character of a representation. See \cite[Remark 2.2]{Mie}.
		\end{remark}

\vskip\baselineskip
\bibliographystyle{plain}
\bibliography{bibliography}

\end{document}